\newcounter{todocounter}
\DeclareRobustCommand{\MyChange}[3][\empty]{%
  {\color{#2}#3}
  \ifthenelse{\isempty{#1}}{}
  {%
    \addtocounter{todocounter}{1}%
    \ifmmode%
        {\color{#2}\text{$^{\framebox{\arabic{todocounter}}}$}}%
    \else%
        {\color{#2}\text{$^{\arabic{todocounter}}$}}%
    \fi%
    \marginpar{\textcolor{#2}{$^{\arabic{todocounter}}$\textnormal{#1}}}%
  }%
}
\newcommand{\LeftEqNo}{\let\veqno\@@leqno}
\definecolor{lightgray}{rgb}{0.7,0.7,0.7}
\definecolor{ghcol}{rgb}{0.7,0.7,0}
\DeclareMathOperator*{\esssup}{ess\,sup}
\newtheorem{lemma}{Lemma}
\newtheorem{corollary}{Corollary}
\newtheorem{definition}{Definition}
\newtheorem{theorem}{Theorem}
\newtheorem{problem}{Problem}
\newtheorem{proposition}{Proposition}
\newtheorem{remark}{Remark}
\newtheorem{assumption}{Assumption}
\newtheorem{GeoSet}{Geometric setting}
\newcommand{\re}{\mathbb{R}}
\newcommand{\ud}{\underline{D}}
\newcommand{\RN}[1]{%
  \textup{\uppercase\expandafter{\romannumeral#1}}%
} 
\newcommand\irregularcircle[2]{
  \pgfextra {\pgfmathsetmacro\len{(#1)+rand*(#2)}}
  +(0:\len pt)
  \foreach \a in {10,20,...,350}{
    \pgfextra {\pgfmathsetmacro\len{(#1)+rand*(#2)}}
    -- +(\a:\len pt)
  } -- cycle
}
\numberwithin{equation}{section}
\numberwithin{lemma}{section}
\numberwithin{corollary}{section}
\numberwithin{theorem}{section}
\numberwithin{proposition}{section}
\numberwithin{problem}{section}
\numberwithin{remark}{section}
\numberwithin{definition}{section}
\numberwithin{assumption}{section}
\title{A domain mapping approach for elliptic equations posed on random bulk and surface domains
\thanks{ TODO 
}}
\author{Lewis Church}
\address{Mathematics Institute, Zeeman Building, University of Warwick, Coventry. CV4 7AL. UK}
\email{Lewis.Church@warwick.ac.uk}
\author{Ana Djurdjevac}
\address{Institute of Mathematics, Straße des 17. Juni 136, Technical University of Berlin, 10623 Berlin,  Germany}
\email{a.djurdjevac@tu-berlin.de}
\author{Charles M. Elliott}
\address{Mathematics Institute, Zeeman Building, University of Warwick, Coventry. CV4 7AL. UK}
\email{C.M.Elliott@warwick.ac.uk}
\date{}
\begin{document}
\setlength\parindent{0pt}
\begin{abstract}
{
In this article, we analyse the domain mapping method approach to approximate statistical moments of solutions 
to linear elliptic partial differential equations posed over random geometries including smooth surfaces and bulk-surface systems. 
In particular, we present the necessary geometric analysis required by the domain mapping method to reformulate elliptic equations on random surfaces
onto a fix deterministic surface using a prescribed stochastic parametrisation of the random domain.
An abstract analysis of a finite element discretisation coupled with a Monte-Carlo sampling is presented for the resulting 
elliptic equations with random coefficients posed over the fixed curved reference domain and 
optimal error estimates are derived. The results from the abstract framework are applied to a model elliptic problem on a random surface and a coupled 
elliptic bulk-surface system and the theoretical convergence rates are confirmed by numerical experiments.
}
\end{abstract}
\maketitle


\section{Introduction}

In the mathematical characterization of numerous scientific and engineering systems, the topology of the domain may not be precisely described. 
The main sources of uncertainty are usually insufficient data, measurement errors or manufacturing variability.  
This uncertainty in the geometry often naturally appears in many applications including surface imaging, manufacturing of nano-devices, material science and biological systems. 
As a result, the analysis of uncertainty in the computational domain has become an interesting and rich mathematical field. 

A comprehensive summary concerning the first directions in the treatment of elliptic partial differential equations (PDEs) in random domains
can be found in \cite{xiu2006numerical, harbrecht2016analysis, dambrine2016numerical, nouy2008extended, canuto2007fictitious} and recently 
\cite{djurdjevac2018RD} for a parabolic equation on a randomly evolving domain. 
Aside from the fictitious domain method \cite{canuto2007fictitious, nouy2011fictitious, nouy2008extended}, 
the main approaches utilize a probabilistic framework by describing the random boundary of the domain with a random field.
This probabiltistic approach is usually proceeded with one of two main techniques: the perturbation approach and the domain mapping method. 
The perturbation approach \cite{harbrecht2013first, harbrecht2008sparse} 
exploits a shape Taylor expansion with respect to the boundary random field to represent the solution, however as a result 
is limited to consideration of only small random deformations. 
The domain mapping approach \cite{xiu2006numerical, harbrecht2014numerical, castrillon2016analytic} on the other hand allows does not suffer the same limitations.
The key idea behind this  method is to define an extension of the random 
boundary process into the interior domain to form a complete random mapping for the whole domain and then to use this domain mapping 
to transform the original partial differential equation on the random domain onto  
the fixed deterministic reference domain resulting in an partial differential equations with random coefficients. 
For the latter formulation, 
there is a wealth of literature available on numerical techniques to compute any quantities of interest, 
see for example \cite{lord2014introduction, le2010spectral, gunzburger2014stochastic}.
The aim of this paper, is to incorporate the domain mapping method with the well-developed field of surface 
PDEs \cite{dziuk2013finite, elliott2017unified,deckelnick2005computation} which has so far only considered uncertainty in 
the coefficients of the considered PDEs, see \cite{djurdjevac2017evolving}. 
This will lead to more realistic geometric description of many of the situations previously dicussed. 
Note that while the domain mapping method will be applicable to domains with random rough surfaces, we will only choose to focus on sufficiently smooth random surfaces and 
leave the rough case for future considerations.


The layout of the article is as follows. In Section 2, we provide an overview of the domain mapping method for partial differential equations in flat random domains
and furthermore discuss suitable notions for the expectation of a family of random domains.
In Section 3, we introduce the necessary geometric analysis and computations required to apply the domain mapping method to elliptic partial differential equations 
posed on random surfaces. 
In Section 4, we present a model elliptic problem on a random surface and a coupled elliptic system on a random bulk-surface, and analyse weak formulations in 
both the stochastic and spatial variables for the 
reformulated equations with stochastic coefficients on the fixed reference surface and bulk-surface respectively.
Section 5 provides an abstract analysis of a finite element discretisation incorporating a pertubation to the variational set-up due 
to a first order approximation of the curved reference domain, and couples with a Monte-Carlo sampling to approximate the first moment of the solution.
An optimal error estimate is derived and subsequently applied in Section 6, to two discretisations of the proposed reformulated problems. 
%
We conclude in Section 7, by presenting numerical results confirming the 
theoretical rate of  convergence.



\section{The domain mapping method}
We begin with a brief introduction on spaces of random fields.
For further details on these spaces, we refer the reader to \cite{lord2014introduction}.
Note throughout this paper, we will let $(\Omega, \mathcal{F}, \mathbb{P})$ denote a complete, separable probability space consisting of a sample space
$\Omega$, a $\sigma-$algebra of events $\mathcal{F}$ and a probability measure $\mathbb{P}$.

\subsection{Random field notation}
For a given Banach space $V$ and $p\in [1, \infty]$, the Lebesgue-Bochner space $L^p(\Omega; V)$ consists of all strongly $\mathcal{F}-$measurable functions 
$
f: \Omega \rightarrow V
$
for which the norm 
\begin{equation*}
 \|f\|_{L^p(\Omega;V)} = 
 \begin{cases}
  \left( \int_{\Omega} \|f(\omega)\|_{V}^p \, d\mathbb{P}(\omega) \right)^{\frac{1}{p}} \quad &p \in [1, \infty)\\
  \esssup\limits_{\omega} \|f(\omega)\|_{V} \quad &p = \infty,
 \end{cases}
\end{equation*}
is finite. 
For convenience, we will express the parameters of a given random field $(f(\omega))(x)$ by $f(\omega,x)$.
In the case that $V$ is a separable Hilbert space, it follows that $L^2(\Omega; V)$ is also a separable Hilbert space and furthermore is isomorphic 
to the tensor product 
\begin{equation}
 L^2(\Omega;V)  \cong L^2(\Omega)\otimes V.
\end{equation}
For details, see \cite{reedmethods}.

\subsection{The domain mapping method} 
To illustrate the key concepts of the domain mapping method, consider the following boundary value
problem 
\begin{align}\label{eq:ExampleMot}
 - \Delta u(\omega) &= f(\omega) \quad \text{in } D(\omega)\\
 u(\omega) &= 0 \quad \text{on } \Gamma(\omega),\nonumber 
\end{align}
posed on an open, connected, bounded domain $D(\omega)\subset \re^2$ with a random boundary $\Gamma(\omega) = \partial D(\omega)$. Here 
the prescribed random field $f(\omega):D(\omega)\rightarrow \re$ and additionally the boundary,
will be assumed to be sufficiently regular to ensure well-posedness for $a.e.$ $\omega$. 
The first essential feature of the domain mapping method is the representation of the stochastic boundary via a random field.
More precisely, in the above context we will assume that there exists a random field $\phi \in L^{\infty}(\Omega; C^{0}(\Gamma_0; \re^2)),$
that maps a fixed closed curve $\Gamma_0\subset \re^2$ onto realisations of the random boundary 
$ \phi(\omega,\cdot): \Gamma_0 \rightarrow \Gamma(\omega),$
see figure \ref{fig:param}.
The next step in the method is to define an extension of the boundary process into the interior to form a
stochastic mapping
$
 \phi(\omega, \cdot): \overline{D_0} \rightarrow \overline{D(\omega)}
$
for the whole domain. For instance, \cite{xiu2006numerical} proposed an extension based on the solution of the Laplace equation over the unit square with boundary 
conditions prescribed by segments of the random boundary. However, alternative 
approaches may wished to be considered depending on the application in question and the geometry of the computational reference domain.
 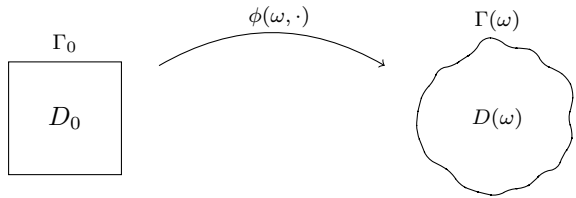
\begin{figure}[H]
\begin{tikzpicture}
  \coordinate (d) at (1.5,0);
  \draw [draw=black] (-5,-0.75) rectangle (-3.5,0.75) node[pos=.5] {$D_0$};
  \draw node[] at (-4.25,1) {\footnotesize $\Gamma_0$};
  \draw node[] at (1.5,1.3) {\footnotesize $\Gamma(\omega)$};
  \draw node[] at (1.5,0) {\footnotesize $D(\omega)$};

  \draw[->] (-3, 0.7)  to [out=30,in=150] (0,0.7);
    \draw node[] at (-1.4,1.35) {\footnotesize $\phi(\omega, \cdot)$};
 
   \draw[black,rounded corners=1mm] (d) \irregularcircle{1cm}{1mm};
\end{tikzpicture} 
  \caption{A realisation of the random domain mapping.}
  \label{fig:param}
\end{figure}
With a complete domain mapping at hand, the random domain problem (\ref{eq:ExampleMot}) can now be reformulated 
as a partial differential equation with random coefficients over the fixed deterministic domain $D_0$,
\begin{align*}
 -\frac{1}{\sqrt{g(\omega)}} \nabla \cdot \left( \sqrt{g(\omega)} G^{-1}(\omega) \nabla ( u\circ \phi ) (\omega) \right) &= (f\circ \phi)(\omega) \quad \text{in } D_0\\
 u(\omega) &= 0 \quad \text{on } \Gamma_0,
\end{align*}
where the specific random coefficients for this particular problem are given by 
$$G(\omega) = \nabla \phi^{\top}(\omega) \nabla \phi(\omega) \quad g(\omega)= \text{det } G(\omega).$$
We now
have access to a wide breadth of numerical techniques, including Monte-Carlo \cite{cliffe2011multilevel,kuo2012quasi}
and the stochastic Galerkin method \cite{babuska2004galerkin,matthies2005galerkin}, to compute any statistical quantities of interest.  
\begin{remark}
 Note that the choice of the reference domain $D_0$, for the stochastic domain mapping $\phi$
describing the complete random geometry in question, is arbitrary and should be chosen in such a way that simplifies the computation at hand. 
 Furthermore in practice, 
only statistical properties such as the expectation and two-point covariance function of the stochastic mapping $\phi$ will be known. As a result, 
 an approximation of the true process will instead be used, commonly taking the form 
 of a truncated series 
 \[
  \phi(\omega, x) = \mathbb{E}[\phi](x) + \sum\limits_{k=1}^N Y_k(\omega) \phi_k(x) 
 \]
 with centered, uncorrelated random coefficients $Y_k$ with unit variance, such as a truncated Karhunen-Lo\`eve expansion.
 Considerations of the induced error is beyond the scope of this paper and we instead refer the reader to \cite{harbrecht2016analysis}.  
\end{remark}

\subsection{Expected domain and quantity of interest}
 In order to give a precise definition of our quantity of interest, which for our purpose shall be some notion of a mean solution, we will
  first need to fix a suitable domain of definition. A natural choice would be the parametrisation based expected domain, introduced in \cite{dambrine2017bernoulli}
  for random star-shaped domains, which we shall generalise as follows.
 \begin{definition}[Parametrisation based expected domain]
Given a family of random Lipschitz domains 
\begin{equation}\label{eq:DomainInitial}
 D(\omega) = \{ \phi(\omega,x) \, | \, x \in D_0\}\subset \re^n,
\end{equation}
parametrised over a fixed Lipchitz domain 
$D_0\subset \re^n$ under the Lipschitz continuous mapping $\phi(\omega,\cdot) : D_0 \rightarrow \re^n$. 
Assuming $\phi(\cdot,x)$ is integrable for all $x\in D_0$,
the parametrisation based expected domain $\mathbb{E}[D]$ of the random domain $D(\omega)$ is given by
\begin{equation}
\mathbb{E}[D] = \{ \mathbb{E}[\phi](x) \, | \, x \in D_0\}.
\end{equation}
 \end{definition}

 \begin{remark}
 Note that there are other alternative methods in which to define the expected value of a family of random sets. For example, we could 
  characterise the random set $D(\omega)$ as an indicator function $1_{D(\omega)}$ and then use its average, the so-called coverage function 
  $p(x) = \mathbb{P}(x \in D(\omega))$ to define the expected value to be set 
  \[
   \mathbb{E}_V[D] = \{ x \, | \, p(x) \geq \lambda\},
  \]
where the parameter $\lambda>0$ is selected in a such a way that the volume of $\mathbb{E}_V[D]$ is close as possible to the expected volume of the random sets $D(\omega)$.
This is known as the Vorob'ev expectation and was shown in \cite{dambrine2017bernoulli} not to coincide with the parameterisation based expectation.
 Although there is no canonical definition of the expected value of a random domain, the parametrisation based expected domain fits naturally in the setting of the domain
mapping method and thus will be adopted. 

 \end{remark}
 
 \begin{assumption}
 We will assume that the expected value of the stochastic mapping 
 $$\mathbb{E}[\phi]:D_0 \rightarrow \mathbb{E}[D],$$
 is bi-Lipschitz continuous. This will ensure that the parametrisation based expected domain $\mathbb{E}[D]$ 
 is also Lipschitz continuous and furthermore of the same dimension as $D_0$ and $D(\omega)$. 
 \end{assumption}
 We will denote the induced zero-mean stochastic mapping 
 between the parametrisation based expected domain $\mathbb{E}[D]$ and realisations of the random domain $D(\omega)$ by
\begin{equation}
\phi_e= \phi \circ  \mathbb{E}[\phi]^{-1}. 
\end{equation}
See figure \ref{fig:param2} for an illustration of the different mappings and domains. Our quantity of interest can now be defined on the expected domain as follows.
\begin{definition}[QoI]
 Given a random field $u(\omega,\cdot):D(\omega) \rightarrow \re$ defined over the family of random Lipschitz domains given in (\ref{eq:DomainInitial}),
 the expected value of the 
 random field is given by 
 \begin{equation}
    \text{QoI}[u] = \mathbb{E}[u \circ \phi_e] \quad  \text{on } \mathbb{E}[D].
 \end{equation}
\end{definition}
 \begin{figure}[H]
\begin{tikzpicture}
  \coordinate (d) at (0.5,0);
  \draw[black] (-4,0) circle (1cm)  node[]{\footnotesize $\mathbb{E}[D]$};
  \draw [draw=black] (-9,-0.75) rectangle (-7.5,0.75) node[pos=.5] {$D_0$};
  \draw node[] at (-4,1.3) {\footnotesize $\mathbb{E}[\Gamma]$};
  \draw node[] at (0.5,1.3) {\footnotesize $\Gamma(\omega)$};
  \draw node[] at (0.5,0) {\footnotesize $D(\omega)$};

    \draw node[] at (-8.25,1) {\footnotesize $\Gamma_0$};

    \draw[->] (-3, 0.7)  to [out=30,in=150] (-0.8,0.7);
    \draw node[] at (-1.9,1.3) {\footnotesize $\phi_e(\omega, \cdot)$};
    
    \draw[->] (-7, 0.7)  to [out=30,in=150] (-5,0.7);
    \draw node[] at (-6,1.3) {\footnotesize $\mathbb{E}[\phi](\cdot)$};

 
   \draw[black,rounded corners=1mm] (d) \irregularcircle{1cm}{.75mm};
\end{tikzpicture} 
  \caption{The computational domain, parametrisation based expected domain and a realisation of the random domain.}
  \label{fig:param2}
\end{figure}
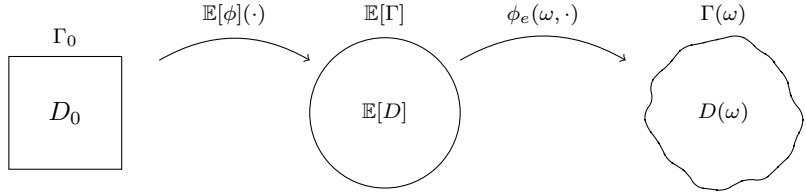
As previously discussed our aim is to apply the domain mapping method for random domains which involve random surfaces.
We will therefore now proceed with some preliminary computations of geometric quantities as well as tangential derivatives of functions 
given over parametrised hypersurfaces in terms of quantites of the reference surface and derivatives of the domain mapping and corresponding pull-back function.
This will provide a basis
for the domain mapping method to be employed to several model PDEs over random surfaces.

\section{Computations for the pull-back of tangential differential operators and geometric quanitities of parametrised hypersurfaces}
Let us first introduce some notation for hypersurfaces that will be adopted throughout this paper. For a more detailed introduction, 
see \cite{dziuk2013finite}.
\subsection{Hypersurface notation}
A set $\Gamma \subset \re^{n+1}$ is said to be a $C^k$-hypersurface for $k \in \mathbb{N} \cup \{\infty\}$, provided that for every $x \in \Gamma$ 
there exists an open set $U \subset \re^{n+1}$ containing $x$ and a smooth function $\varphi \in C^k(U)$ such that $\nabla \varphi(x) \neq 0$ on $U \cap \Gamma$ and 
\begin{equation}
U \cap \Gamma = \{x \in U \, | \, \varphi(x) =0 \}.
\end{equation}
The unit normal vector field $\nu^{\Gamma}$ to the hypersurface
$\Gamma$ can be computed via
\begin{equation}\label{eq:UnitNormalFormula}
 \nu^{\Gamma} = \pm\frac{\nabla \varphi}{|\nabla \varphi|}
\end{equation}
with a choice of orientation. For a differentiable function $f:\Gamma \rightarrow \re$, we define the tangential gradient by
\begin{equation}
\nabla_{\Gamma} f = \nabla \bar{f} - ( \nabla \bar{f}\cdot \nu^{\Gamma}) \nu^{\Gamma} = \mathcal{P}_{\Gamma} \nabla \bar{f}
\end{equation}
where $\mathcal{P}_{\Gamma} = I - \nu^{\Gamma} \otimes \nu^{\Gamma}$ is the projection operator mapping onto the tangent space $T \Gamma$ to the hypersurface $\Gamma$  
and $\bar{f}$ is a smooth extension of $f$ to an open neighbourhood in $\re^{n+1}$. 
It can be shown that the tangential gradient is independent of the extension chosen \cite[Lemma 2.4]{dziuk2013finite} and we shall denote its components by 
\[
\nabla_{\Gamma}f = (\ud_1^{\Gamma}f,..., \ud_{n+1}^{\Gamma}f)^{\top}.
\]
We shall further denote the tangential derivative of the unit normal by $\mathcal{H}^{\Gamma}= \nabla_{\Gamma}\nu^{\Gamma}$ and 
will refer to this matrix as the extended Weingarten map. It can be shown that $\mathcal{H}^{\Gamma}$ is symmetric with a zero eigenvalue 
corresponding to the unit normal vector $\nu^{\Gamma}$
and furthermore agrees with the Weingarten map when restricted to the tangent space $T\Gamma$, see \cite{deckelnick2005computation} for details.
The Laplace-Beltrami operator is then defined for twice differentiable functions as follows
\begin{equation}
\Delta_{\Gamma}f = \nabla_{\Gamma} \cdot \nabla_{\Gamma}f = \sum \limits_{i=1}^{n+1} \ud_i^{\Gamma}\ud_i^{\Gamma}f.
\end{equation}
We next introduce the Fermi coordinates with the following well-known lemma \cite[Lemma 2.8]{dziuk2013finite}. These are a global coordinate system defined in an open neighbourhood around $\Gamma$ in which every point can be uniquely expressed in terms of its signed distance $d^{\Gamma}(x)$ and its closest point $a^{\Gamma}(x)$ on the surface $\Gamma$ .
\begin{lemma}\label{lemma:fermi}
Let $d^{\Gamma}$ denote the signed distance function to $\Gamma$ oriented in the chosen direction of the unit normal vector field $\nu^{\Gamma}$. Then there exists $\delta>0$ 
such that for every $x \in U_{\delta}:= \{ y \in \re^{n+1} \, | \, |d^{\Gamma}(y)| < \delta \}$ there exists a unique point $a^{\Gamma}(x)\in \Gamma$ that satisfies
\begin{equation}
\label{eq:aProj}
x = a^{\Gamma}(x)  +d^{\Gamma}(x) \nu^{\Gamma}(a^{\Gamma}(x)).
\end{equation}
Furthermore, assuming $\Gamma \in C^2$ it follows that $d^{\Gamma}\in C^2(U_{\delta})$ and $a^{\Gamma}\in C^{1}(U_{\delta})$ with 
\begin{align}
    \nabla d^{\Gamma}(x) &= \nu^{\Gamma}(a^{\Gamma}(x)) \label{fermi:d}\\
    \nabla a^{\Gamma}(x) &= \left(I+ d^{\Gamma}(x) \mathcal{H}^{\Gamma}(a^{\Gamma}(x))\right)^{-1} \mathcal{P}_{\Gamma}(a^{\Gamma}(x)) \label{fermi:a}.
\end{align}
\end{lemma}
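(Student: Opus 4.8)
The plan is to construct the tubular neighbourhood of $\Gamma$ explicitly via the normal map and then read off every assertion from the inverse function theorem. Since $\Gamma \in C^2$, the unit normal field $\nu^\Gamma$ belongs to $C^1(\Gamma)$, so the map $F : \Gamma \times \re \to \re^{n+1}$, $F(p,t) = p + t\,\nu^\Gamma(p)$, is $C^1$. Its differential at $(p,0)$ restricts to the identity on $T_p\Gamma$ and sends $\partial_t$ to $\nu^\Gamma(p)$, hence is an isomorphism $T_p\Gamma \oplus \re \to \re^{n+1}$; the inverse function theorem therefore makes $F$ a local $C^1$-diffeomorphism near each $(p,0)$. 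Using that $\Gamma$ is compact, together with the standard sequential argument excluding that normal fibres from far-apart points of $\Gamma$ overlap, one obtains a uniform $\delta > 0$ for which $F : \Gamma \times (-\delta,\delta) \to \re^{n+1}$ is injective with open image $\mathcal{U}$, and hence a $C^1$-diffeomorphism onto $\mathcal{U}$.

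Next I would identify the two components of $F^{-1}$ with the nearest-point projection and the signed distance. For $x \in \mathcal{U}$ write $(p,t) = F^{-1}(x)$, so that $x = p + t\,\nu^\Gamma(p)$ and $\mathrm{dist}(x,\Gamma) \le |x-p| = |t| < \delta$. Minimising $q \mapsto |x-q|^2$ over $\Gamma$, the minimum is attained and the first-order condition forces $x - q_\ast \perp T_{q_\ast}\Gamma$, i.e. $x = q_\ast \pm |x - q_\ast|\,\nu^\Gamma(q_\ast) = F(q_\ast, \pm\mathrm{dist}(x,\Gamma))$ with second coordinate in $(-\delta,\delta)$. Injectivity of $F$ then shows the minimiser is unique and equals $p$, with $|t| = \mathrm{dist}(x,\Gamma)$; the sign of $t$ is fixed by the orientation of $\nu^\Gamma$, so $t = d^\Gamma(x)$. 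The same argument applied to an arbitrary $x$ with $\mathrm{dist}(x,\Gamma) < \delta$ shows $\mathcal{U} = U_\delta$. Hence $a^\Gamma := \pi_1 \circ F^{-1} \in C^1(U_\delta)$ is the unique closest point, $d^\Gamma = \pi_2 \circ F^{-1} \in C^1(U_\delta)$, and \eqref{eq:aProj} holds.

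The two gradient formulas follow by differentiating the identity $x = a^\Gamma(x) + d^\Gamma(x)\,\nu^\Gamma(a^\Gamma(x))$. Squaring it gives $d^\Gamma(x)^2 = |x - a^\Gamma(x)|^2$; differentiating and using that the columns of $\nabla a^\Gamma(x)$ lie in $T_{a^\Gamma(x)}\Gamma$, so that $(\nabla a^\Gamma(x))^\top \nu^\Gamma(a^\Gamma(x)) = 0$, collapses the right-hand side to $2\,d^\Gamma(x)\,\nu^\Gamma(a^\Gamma(x))$, whence $\nabla d^\Gamma = \nu^\Gamma(a^\Gamma)$ on $\{d^\Gamma \ne 0\}$ and, by continuity, on all of $U_\delta$; this is \eqref{fermi:d}. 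Because $\nu^\Gamma \in C^1(\Gamma)$ and $a^\Gamma \in C^1(U_\delta)$, the composition $\nu^\Gamma \circ a^\Gamma$ is $C^1$, so $\nabla d^\Gamma \in C^1(U_\delta)$ and therefore $d^\Gamma \in C^2(U_\delta)$. Differentiating the identity directly now, and inserting $\nabla d^\Gamma = \nu^\Gamma(a^\Gamma)$ together with the chain rule $\nabla(\nu^\Gamma \circ a^\Gamma) = \mathcal{H}^\Gamma(a^\Gamma)\,\nabla a^\Gamma$ (valid since $\nabla a^\Gamma$ takes values in the tangent space and $\mathcal{H}^\Gamma = \nabla_\Gamma \nu^\Gamma$ is the tangential derivative of the normal), yields $I = \nabla a^\Gamma + \nu^\Gamma(a^\Gamma) \otimes \nu^\Gamma(a^\Gamma) + d^\Gamma\,\mathcal{H}^\Gamma(a^\Gamma)\,\nabla a^\Gamma$, that is, $(I + d^\Gamma \mathcal{H}^\Gamma(a^\Gamma))\,\nabla a^\Gamma = \mathcal{P}_\Gamma(a^\Gamma)$. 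After shrinking $\delta$ so that $I + d^\Gamma \mathcal{H}^\Gamma(a^\Gamma)$ is invertible on $U_\delta$ — possible since the principal curvatures of $\Gamma$ are uniformly bounded — we invert to obtain \eqref{fermi:a}.

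I expect the main obstacle to be the step that pins the abstract coordinates $(p,t) = F^{-1}(x)$ to the geometric objects: proving simultaneously that $F$ maps $\Gamma \times (-\delta,\delta)$ bijectively onto precisely $U_\delta$ and that its inverse returns the unique nearest point and the signed distance. This is where the global (not merely local) injectivity of $F$ and the variational characterisation of the closest point must be combined, and where compactness of $\Gamma$ genuinely enters. By contrast the subsequent differentiations are routine, the only point requiring care being that $\nabla a^\Gamma$ has range in the tangent space, which is exactly what makes the cross terms disappear.
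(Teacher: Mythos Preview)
The paper does not actually prove this lemma; it is stated as a well-known result and attributed to \cite[Lemma 2.8]{dziuk2013finite}. Your proposal supplies precisely the standard tubular-neighbourhood argument that underlies that reference, and it is correct: the inverse function theorem applied to $F(p,t)=p+t\,\nu^\Gamma(p)$, the compactness argument for a uniform $\delta$, the variational identification of $F^{-1}$ with $(a^\Gamma,d^\Gamma)$, and the differentiations leading to \eqref{fermi:d} and \eqref{fermi:a} are all carried out cleanly.

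Two minor remarks. First, you invoke compactness of $\Gamma$ to obtain a uniform $\delta$; the lemma as stated does not list this hypothesis, but every hypersurface in the paper is compact, so the assumption is harmless in context and you might simply flag it. Second, in the computation of $\nabla d^\Gamma$ you divide by $d^\Gamma$ and then pass to the limit on $\{d^\Gamma=0\}=\Gamma$ by continuity; an alternative that avoids this case split is to differentiate $d^\Gamma=(x-a^\Gamma(x))\cdot\nu^\Gamma(a^\Gamma(x))$ directly, which yields $\nabla d^\Gamma=\nu^\Gamma(a^\Gamma)$ in one line using the same tangency of $\nabla a^\Gamma$. Either route is fine.
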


\subsection{Geometric settings}
As a point of reference, we will now describe the deterministic geometric settings that will be considered
for the parametrised surfaces in the subsequent calculations.
In each case, the reference surface $\Gamma_0\subset \re^{n+1}$ will be assumed to be of class at least $C^2$ and oriented by the unit normal vector field $\nu^{\Gamma_0}$.
The general geometric setting for the parametrised surface will be as follows. 
\begin{GeoSet}[Parametrised surface]
The hypersurface $\Gamma \subset \re^{n+1}$ will be given by
\begin{equation}\label{Geometric:setting1}
 \Gamma = \{ \phi(x) \, | \, x \in \Gamma_0\},
\end{equation}
for a given mapping $\phi:\Gamma_0 \rightarrow \re^{n+1}$.
\end{GeoSet}
We will further consider the special case, where the parametrised surface has the following graph-like representation over the reference surface.
\begin{GeoSet}[Graph-like surface]
The surface $\Gamma\subset \re^{n+1}$ will be prescribed by
 \begin{equation}\label{Geometric:setting2}
 \Gamma = \{ \phi(x) = x + h(x) \nu^{\Gamma_0}(x) \, | \, x \in \Gamma_0\} 
\end{equation}
for a given height function $h:\Gamma_0 \rightarrow \re$ defined over the reference surface.
\end{GeoSet}
Additionally, we will consider the case where the surface is compact (and thus without a boundary) and encloses an open bulk domain.
\begin{GeoSet}[Parametrised bulk-surface]
The open bulk domain $D\subset \re^{n+1}$ and its boundary $\Gamma = \partial D$ which is a surface, will be given by 
 \begin{equation}\label{Geometric:setting3}
 D = \{ \phi(x) \, | \, x \in D_0\} \qquad
 \Gamma = \{ \phi(x) \, | \, x \in \Gamma_0\}
\end{equation}
for a given parametrisation $\phi:\overline{D_0} \rightarrow \re^{n+1}$ defined over an 
open bulk domain $D_0\subset \re^{n+1}$ with boundary $\Gamma_0 = \partial D_0$.
\end{GeoSet}
Note that in each case, the given parametrisation $\phi$ will be assumed to be a sufficiently smooth diffeomorphism for the calculation in question.
 Furthermore, we shall denote the associated pull-back of a given function $f$ defined over the parametrised domain onto the reference domain by
\begin{equation}\label{eq:pullBackNotation}
\hat{f} = f \circ \phi.
\end{equation}

\subsection{The tangential gradient and Laplace-Beltrami operator}
Considering a general parametrised hypersurface $\Gamma$ as described in (\ref{Geometric:setting1}), 
we will now compute expressions for the pull-back of the
 tangential gradient $\nabla_{\Gamma}$ and Laplace-Beltrami operator $\Delta_{\Gamma}$ onto the reference surface $\Gamma_0$ under the domain mapping $\phi$.
 As a motivation for these calculations, 
 let us first recall that for a given local parametrisation
 \[
  X: U \rightarrow W \cap \Gamma
 \]
 of the hypersurface $\Gamma$, where $U\subset \re^n$ and $W\subset \re^{n+1}$ denote open sets,
 we can express the tangential gradient and Laplace-Beltrami operator in local coordinates as follows
 \begin{align}
 \label{eq:surfGrad_local}
  \nabla_{\Gamma} f \circ X &= \nabla X G^{-1} \nabla F\\
   \label{eq:LB_local}
  \Delta_{\Gamma}f\circ X &= \frac{1}{\sqrt{g}} \nabla \cdot \left( \sqrt{g} G^{-1} \nabla F \right)
 \end{align}
where $F=f\circ X$ and the first fundamental form $G:U \rightarrow \re^{n\times n}$ is defined as 
$G= \nabla X^{\top} \nabla X$
with $g=\text{det} G$.
In deriving expressions for the pull-back onto the reference surface $\Gamma_0$ instead of the local coordinates, 
we will see similar expresssions to (\ref{eq:surfGrad_local}) and (\ref{eq:LB_local}) but with the first fundamental 
form replaced by the following tensor $G_{\Gamma_0}:\Gamma_0 \rightarrow \re^{(n+1)\times (n+1)}$ defined by 
\begin{equation}
\label{eq:G_eq}
G_{\Gamma_0} = \nabla_{\Gamma_0} \phi^{\top} \nabla_{\Gamma_0} \phi + \nu^{\Gamma_0} \otimes \nu^{\Gamma_0},
\end{equation}
where we will similarly denote its determinant by 
$g_{\Gamma_0} = \text{det} G_{\Gamma_0}$.
This tensor can be seen to arise by considering a local parametrisation 
$\sigma: U \rightarrow V\cap \Gamma_0$
of the reference surface $\Gamma_0$, with $V\subset \re^{n+1}$ denoting an open set, and the induced local parametrisation 
$$
X = \phi \circ \sigma: U \rightarrow W\cap \Gamma
$$
of the hypersurface $\Gamma$.
By computing the first fundamental form with the chain rule, we observe that
\begin{align*}
 G &= \nabla \sigma^{\top} \left(\nabla_{\Gamma_0}\phi^{\top} \circ \sigma \right) \left( \nabla_{\Gamma_0} \phi \circ \sigma  \right)\nabla \sigma.
\end{align*}
Since $ \nabla_{\Gamma_0}\phi^{\top}\nabla_{\Gamma_0}\phi \nu^{\Gamma_0} =0$ and its restriction to the tangent space maps 
$\nabla_{\Gamma_0}\phi^{\top}\nabla_{\Gamma_0}\phi: T\Gamma_0 \rightarrow T \Gamma_0$, we are able to extend in the normal direction as in (\ref{eq:G_eq}) 
to form an invertible matrix. Furthermore as  $\nabla \sigma \in T\Gamma_0$, it follows that we have 
 \begin{align*}
 G  &= \nabla \sigma^{\top} \left(\nabla_{\Gamma_0}\phi^{\top} \nabla_{\Gamma_0} \phi + \nu^{\Gamma_0}\otimes \nu^{\Gamma_0} \right)\circ \sigma \, \nabla \sigma
= \nabla \sigma^{\top} \left(G_{\Gamma_0} \circ \sigma\right) \nabla \sigma.
\end{align*}
Note that the given extension (\ref{eq:G_eq}) in the normal direction is a natural choice since the surface measures $dA_{\Gamma}$ and $dA_{\Gamma_0}$
of the respective surfaces 
can be shown to satisfy the relation
$dA_{\Gamma} = \sqrt{g_{\Gamma_0}} dA_{\Gamma_0}$ under the domain transformation mapping $\phi$.
We now continue by proving that a similar expression to (\ref{eq:surfGrad_local}) holds for the pull-back of the tangential gradient.
\begin{lemma}[Tangential gradient]\label{theorem:surfGrad}
 Given any differentiable function $f:\Gamma \rightarrow \re$, the pull-back of the tangential gradient onto the reference surface $\Gamma_0$ is given by 
\begin{align}\label{eq:Surfgrad} 
    \left( \nabla_{\Gamma} f\right) \circ \phi &= \left( \nabla_{\Gamma_0} \phi
    + \nu^{\Gamma} \circ \phi \otimes \nu^{\Gamma_0} \right)^{-\top} \nabla_{\Gamma_0} \hat{f}
    = \nabla_{\Gamma_0}\phi \, G_{\Gamma_0}^{-1} \nabla_{\Gamma_0}\hat{f}.
\end{align}
\end{lemma}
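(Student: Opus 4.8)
\textit{Proof plan.} The plan is to reduce the statement to a chain rule for tangential gradients under $\phi$, namely $\nabla_{\Gamma_0}\hat{f} = \nabla_{\Gamma_0}\phi^{\top}\bigl((\nabla_{\Gamma}f)\circ\phi\bigr)$ on $\Gamma_0$, then to invert this relation on the tangent spaces, and finally to rewrite the inverse through $G_{\Gamma_0}$ by means of the algebraic identity $A^{\top}A = G_{\Gamma_0}$, where $A := \nabla_{\Gamma_0}\phi + (\nu^{\Gamma}\circ\phi)\otimes\nu^{\Gamma_0}$. Throughout I will repeatedly use the two orthogonalities $\nabla_{\Gamma_0}\phi\,\nu^{\Gamma_0}=0$ (each row of $\nabla_{\Gamma_0}\phi$ is a tangential gradient) and $\nabla_{\Gamma_0}\phi^{\top}(\nu^{\Gamma}\circ\phi)=0$ (the range of $\nabla_{\Gamma_0}\phi$ is the tangent space of $\Gamma$, pulled back along $\phi$).

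For the chain rule I would take smooth ambient extensions $\bar\phi$ of $\phi$ and $\bar{f}$ of $f$; a convenient choice is $\bar\phi = \phi\circ a^{\Gamma_0}$ on the Fermi neighbourhood $U_{\delta}$ of Lemma \ref{lemma:fermi}, for which (\ref{fermi:a}) gives $\nabla\bar\phi = \nabla_{\Gamma_0}\phi$ on $\Gamma_0$. Since $\bar{f}\circ\bar\phi$ extends $\hat{f}$ and the tangential gradient is independent of the chosen extension, the classical chain rule followed by projection with $\mathcal{P}_{\Gamma_0}$ gives $\nabla_{\Gamma_0}\hat{f} = \nabla_{\Gamma_0}\phi^{\top}\bigl((\nabla\bar{f})\circ\phi\bigr)$; splitting $\nabla\bar{f} = \nabla_{\Gamma}f + (\nu^{\Gamma}\cdot\nabla\bar{f})\,\nu^{\Gamma}$ on $\Gamma$, the normal part is annihilated by $\nabla_{\Gamma_0}\phi^{\top}(\nu^{\Gamma}\circ\phi)=0$, which yields the claimed chain rule. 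I expect this step to be the main obstacle, because $\nabla_{\Gamma_0}\phi$ is an $(n+1)\times(n+1)$ matrix that actually represents a map between the $n$-dimensional tangent spaces $T\Gamma_0$ and $T\Gamma$, so one has to track tangential versus normal components carefully and reuse the two orthogonalities at every turn.

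I would then view $A$ as a linear endomorphism of $\re^{n+1}$. By the orthogonalities, $A\nu^{\Gamma_0}=\nu^{\Gamma}\circ\phi$ and $A$ coincides on $T\Gamma_0$ with the differential of $\phi\colon\Gamma_0\to\Gamma$, which is an isomorphism onto $T\Gamma$ since $\phi$ is a diffeomorphism; hence $A$ is invertible, carrying $\nu^{\Gamma_0}$ to $\nu^{\Gamma}\circ\phi$ and $T\Gamma_0$ onto the pulled-back tangent space of $\Gamma$. Dualising, $A^{\top}$ maps that tangent space onto $T\Gamma_0$ and there agrees with $\nabla_{\Gamma_0}\phi^{\top}$, because $\bigl(\nu^{\Gamma_0}\otimes(\nu^{\Gamma}\circ\phi)\bigr)v = \nu^{\Gamma_0}\bigl((\nu^{\Gamma}\circ\phi)\cdot v\bigr)=0$ for $v$ tangent to $\Gamma$. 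Since $(\nabla_{\Gamma}f)\circ\phi$ is tangent to $\Gamma$, the chain rule becomes $A^{\top}\bigl((\nabla_{\Gamma}f)\circ\phi\bigr)=\nabla_{\Gamma_0}\hat{f}$, and applying $A^{-\top}$ (and using $\nabla_{\Gamma_0}\hat{f}\in T\Gamma_0$) gives the first identity $(\nabla_{\Gamma}f)\circ\phi = A^{-\top}\nabla_{\Gamma_0}\hat{f}$.

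Finally I would expand $A^{\top}A$: the two cross terms vanish by $\nabla_{\Gamma_0}\phi^{\top}(\nu^{\Gamma}\circ\phi)=0$, the remaining rank-one term equals $|\nu^{\Gamma}\circ\phi|^{2}\,\nu^{\Gamma_0}\otimes\nu^{\Gamma_0}=\nu^{\Gamma_0}\otimes\nu^{\Gamma_0}$, and therefore $A^{\top}A = \nabla_{\Gamma_0}\phi^{\top}\nabla_{\Gamma_0}\phi + \nu^{\Gamma_0}\otimes\nu^{\Gamma_0}=G_{\Gamma_0}$ (this also re-proves invertibility of $G_{\Gamma_0}$). Consequently $A^{-\top}=(A^{\top})^{-1}=A\,G_{\Gamma_0}^{-1}$, and since $G_{\Gamma_0}\nu^{\Gamma_0}=\nu^{\Gamma_0}$ this reads $A^{-\top}=\nabla_{\Gamma_0}\phi\,G_{\Gamma_0}^{-1}+(\nu^{\Gamma}\circ\phi)\otimes\nu^{\Gamma_0}$; evaluating at $\nabla_{\Gamma_0}\hat{f}$ and using that $G_{\Gamma_0}^{-1}$ preserves $T\Gamma_0$ kills the rank-one term and gives the second identity $(\nabla_{\Gamma}f)\circ\phi = \nabla_{\Gamma_0}\phi\,G_{\Gamma_0}^{-1}\nabla_{\Gamma_0}\hat{f}$, so the two displayed right-hand sides agree precisely because $\nabla_{\Gamma_0}\hat{f}$ is tangential. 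As an alternative to the first two steps one could instead pass to a local parametrisation $\sigma\colon U\to V\cap\Gamma_0$, insert $X=\phi\circ\sigma$ into the classical local formula (\ref{eq:surfGrad_local}) together with $\nabla X=(\nabla_{\Gamma_0}\phi\circ\sigma)\nabla\sigma$ and the relation $G=\nabla\sigma^{\top}(G_{\Gamma_0}\circ\sigma)\nabla\sigma$ derived above, and simplify using $\nabla\sigma\,(\nabla\sigma^{\top}G_{\Gamma_0}\nabla\sigma)^{-1}\nabla\sigma^{\top}=G_{\Gamma_0}^{-1}\mathcal{P}_{\Gamma_0}$; this route works but requires the same tangential bookkeeping.
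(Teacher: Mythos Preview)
Your proposal is correct and follows essentially the same approach as the paper: both start from the tangential chain rule $\nabla_{\Gamma_0}\hat f=\nabla_{\Gamma_0}\phi^{\top}\bigl((\nabla_{\Gamma}f)\circ\phi\bigr)$, extend $\nabla_{\Gamma_0}\phi$ by the rank-one term $(\nu^{\Gamma}\circ\phi)\otimes\nu^{\Gamma_0}$ to make it invertible, and then exploit the orthogonalities $\nabla_{\Gamma_0}\phi\,\nu^{\Gamma_0}=0$ and $\nabla_{\Gamma_0}\phi^{\top}(\nu^{\Gamma}\circ\phi)=0$. The only cosmetic difference is in the second equality: the paper writes $(\nabla_{\Gamma}f)\circ\phi=\nabla_{\Gamma_0}\phi\,\alpha$ for some $\alpha\in T\Gamma_0$ and solves for $\alpha$ directly, whereas you obtain it from the matrix identity $A^{\top}A=G_{\Gamma_0}$ (which the paper proves separately just after the lemma); both routes are equivalent and equally short.
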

 
\begin{proof}
Differentiating the associated pull-back function $\hat{f} = f\circ \phi$ and applying the chain rule for 
tangential derivatives gives
\begin{equation}
 \label{eq:ProofSurfaceGrad}
 \nabla_{\Gamma_0} \hat{f} =  \nabla_{\Gamma_0}\phi^{\top} \left(\nabla_{\Gamma}f \right)\circ \phi.
\end{equation}
Since the tangential gradient of the surface parametrisation bijectively maps 
$\nabla_{\Gamma_0}\phi: T_{(\cdot)}\Gamma_0 \rightarrow T_{\phi(\cdot)}\Gamma$ and additionally
has kernel equal to span$\{\nu^{\Gamma_0}\}$, we see that in order to invert the matrix $\nabla_{\Gamma_0}\phi$, we must 
 first modify the corresponding linear map to bijectively map the space span$\{\nu^{\Gamma_0}\}$ 
into span$\{\nu^{\Gamma} \circ \phi\}$. 
 One possible solution is to add the linear map $L: \re^{n+1}\rightarrow \re^{n+1}$ characterised by  
\[
L(\nu^{\Gamma_0}) = \nu^{\Gamma}\circ \phi, \quad L(\tau) = 0 \quad \tau \in T\Gamma_0,
\]
which translates to adding the following tensor product 
\begin{equation}\label{indep:Exten}
\nabla_{\Gamma_0} \hat{f} = \nabla_{\Gamma_0}\phi^{\top} \left(\nabla_{\Gamma} f \right) \circ\phi 
=  \left( \nabla_{\Gamma_0} \phi + \nu^{\Gamma}\circ \phi \otimes \nu^{\Gamma_0}\right)^{\top} \left(\nabla_{\Gamma} f\right)\circ \phi.
\end{equation}
and thus leads to (\ref{eq:Surfgrad}). For the second equality, we again use the property that the restriction
$\nabla_{\Gamma_0}\phi: T_{(\cdot)}\Gamma_0 \rightarrow T_{\phi(\cdot)}\Gamma$ is a bijective mapping
to express 
$
 (\nabla_{\Gamma} f ) \circ \phi = \nabla_{\Gamma_0}\phi \, \alpha
$
for some $\alpha\in T\Gamma_0$. Substituting into (\ref{eq:ProofSurfaceGrad}) then gives
$$
\nabla_{\Gamma_0}\hat{f} = \nabla_{\Gamma_0}\phi^{\top} \nabla_{\Gamma_0}\phi \alpha 
= \left(\nabla_{\Gamma_0}\phi^{\top} \nabla_{\Gamma_0}\phi+ \nu^{\Gamma_0} \otimes \nu^{\Gamma_0}\right) \alpha.
$$
Hence we deduce 
$\alpha = G_{\Gamma_0}^{-1} \nabla_{\Gamma_0}\hat{f}$
and obtain
the second equality.
\end{proof}

\begin{remark}
Note that the chain rule for tangential gradients (\ref{eq:Surfgrad}) holds for any choice of orientation of the unit normals $\nu^{\Gamma_0}$ and $\nu^{\Gamma}$ 
as a result of (\ref{indep:Exten}).
\end{remark}
Let us denote the given extension of the tangential gradient of the surface parametrisation appearing in (\ref{eq:Surfgrad}) by $B= \left(b_{ij}\right)_{i,j}$,
\begin{equation}
\label{eq:Bdef}
B = \nabla_{\Gamma_0} \phi + \nu^{\Gamma}\circ \phi \otimes \nu^{\Gamma_0}
\end{equation}
and furthermore denote its determinant by $b= \det B$ and the entries of its inverse by $B^{-1}=\left(b^{ij} \right)_{i,j} $. We observe 
with the orthogonality result $\nabla_{\Gamma_0}\phi^{\top} ( \nu^{\Gamma} \circ \phi) = 0$ which follows from the property that the restriction maps 
$\nabla_{\Gamma_0}\phi: T_{(\cdot)}\Gamma_0 \rightarrow T_{\phi(\cdot)}\Gamma$, that
\begin{align}
\label{eq:BandG}
B^{\top}B &= \left( \nabla_{\Gamma_0}\phi^{\top} + \nu^{\Gamma_0} \otimes (\nu^{\Gamma}\circ\phi) \right) \left( \nabla_{\Gamma_0} \phi 
+ (\nu^{\Gamma}\circ\phi) \otimes \nu^{\Gamma_0} \right)= \nabla_{\Gamma_0}\phi^{\top} \nabla_{\Gamma_0}\phi + \nu^{\Gamma_0}\otimes \nu^{\Gamma_0}  = G_{\Gamma_0}.
\end{align}
Consequently, we have
\begin{equation}
 \label{eq:bandg}
 b = \det (B) = \sqrt{\det (B^{\top} B)} = \sqrt{\det G_{\Gamma_0}} = \sqrt{g_{\Gamma_0}}.
\end{equation}
We can now compute the pull-back of the Laplace-Beltrami operator onto the reference surface as follows.
\begin{lemma}[Laplace-Beltrami operator]\label{theorem:LaplaceBelt}
 Given any $f:\Gamma \rightarrow \re$ twice differentiable, the pull-back of the Laplace-Beltrami operator is given by
\begin{align}
    \label{eq:Laplacebeltrami} 
        (\Delta_{\Gamma}f)\circ \phi &= \frac{1}{\sqrt{g_{\Gamma_0}}} \nabla_{\Gamma_0}\cdot 
        \left( \sqrt{g_{\Gamma_0}} G_{\Gamma_0}^{-1} \nabla_{\Gamma_0} \hat{f} \right).
\end{align}
\end{lemma}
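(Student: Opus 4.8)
The plan is to derive the pull-back of the Laplace--Beltrami operator from the pull-back of the tangential gradient (Lemma \ref{theorem:surfGrad}) together with the divergence theorem on the closed/parametrised surfaces, rather than by a direct local-coordinate computation. More precisely, I would prove the identity in weak form against a test function $\eta \in C^1(\Gamma_0)$ and then argue the strong form follows by density and the arbitrariness of $\eta$. The starting point is the integration-by-parts formula on $\Gamma$: for $\psi : \Gamma \to \re$ with $\psi$ vanishing near $\partial \Gamma$ (or $\Gamma$ closed),
\begin{equation*}
  \int_{\Gamma} (\Delta_\Gamma f)\, \psi \; dA_\Gamma = - \int_\Gamma \nabla_\Gamma f \cdot \nabla_\Gamma \psi \; dA_\Gamma.
\end{equation*}

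Next I would transform both sides to $\Gamma_0$ under $\phi$. For the surface measure I use the relation $dA_\Gamma = \sqrt{g_{\Gamma_0}}\, dA_{\Gamma_0}$ noted just before Lemma \ref{theorem:surfGrad}. For the gradients I substitute the chain rule \eqref{eq:Surfgrad}, writing $\psi = \eta \circ \phi^{-1}$ so that $\widehat{\psi} = \eta$ and $(\nabla_\Gamma \psi)\circ\phi = \nabla_{\Gamma_0}\phi\, G_{\Gamma_0}^{-1} \nabla_{\Gamma_0}\eta$, and likewise for $f$. Using the symmetry of $G_{\Gamma_0}^{-1}$ and the key identity \eqref{eq:BandG}, namely $\nabla_{\Gamma_0}\phi^\top \nabla_{\Gamma_0}\phi = G_{\Gamma_0} - \nu^{\Gamma_0}\otimes\nu^{\Gamma_0}$, the integrand $\big(\nabla_\Gamma f \cdot \nabla_\Gamma \psi\big)\circ\phi$ collapses: since $G_{\Gamma_0}^{-1}\nabla_{\Gamma_0}\widehat f$ and $G_{\Gamma_0}^{-1}\nabla_{\Gamma_0}\eta$ are tangential to $\Gamma_0$, the $\nu^{\Gamma_0}\otimes\nu^{\Gamma_0}$ term drops and one is left with $\nabla_{\Gamma_0}\widehat f \cdot G_{\Gamma_0}^{-1}\nabla_{\Gamma_0}\eta$. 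Thus
\begin{equation*}
  \int_{\Gamma_0} \big( (\Delta_\Gamma f)\circ\phi \big)\, \eta \, \sqrt{g_{\Gamma_0}}\; dA_{\Gamma_0}
  = - \int_{\Gamma_0} \sqrt{g_{\Gamma_0}}\; \big( G_{\Gamma_0}^{-1}\nabla_{\Gamma_0}\widehat f \big)\cdot \nabla_{\Gamma_0}\eta \; dA_{\Gamma_0}.
\end{equation*}

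Finally I would apply the integration-by-parts (divergence) formula on $\Gamma_0$ to the right-hand side, moving $\nabla_{\Gamma_0}$ off $\eta$; this produces $\int_{\Gamma_0} \nabla_{\Gamma_0}\cdot\big(\sqrt{g_{\Gamma_0}} G_{\Gamma_0}^{-1}\nabla_{\Gamma_0}\widehat f\big)\, \eta \; dA_{\Gamma_0}$. Comparing with the left-hand side and using that $\eta$ is arbitrary (and $\sqrt{g_{\Gamma_0}}>0$) yields \eqref{eq:Laplacebeltrami} pointwise a.e., hence everywhere by the assumed smoothness. I would also remark that an alternative, entirely pointwise derivation is available: pass to a local parametrisation $\sigma$ of $\Gamma_0$, use $X=\phi\circ\sigma$, the classical formula \eqref{eq:LB_local}, and the already-established factorisation $G = \nabla\sigma^\top (G_{\Gamma_0}\circ\sigma)\nabla\sigma$ together with $\sqrt{g} = \sqrt{g_{\Gamma_0}\circ\sigma}\,\sqrt{\det(\nabla\sigma^\top\nabla\sigma)}$; the $\nabla\sigma$ factors and their Jacobians cancel after applying the chain rule to $\nabla\cdot(\cdot)$, leaving exactly the claimed expression. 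The main obstacle is the bookkeeping in this cancellation — specifically, verifying that the $\nu^{\Gamma_0}\otimes\nu^{\Gamma_0}$ correction terms introduced in passing from $\nabla_{\Gamma_0}\phi^\top\nabla_{\Gamma_0}\phi$ to the invertible $G_{\Gamma_0}$ (and in $B^\top B = G_{\Gamma_0}$) genuinely annihilate against the tangential vectors at every stage, and that the surface divergence $\nabla_{\Gamma_0}\cdot$ of the normal-extended matrix field agrees with what the local computation produces; both reduce to the orthogonality relations $\nabla_{\Gamma_0}\phi^\top(\nu^\Gamma\circ\phi)=0$ and $G_{\Gamma_0}^{-1}\nu^{\Gamma_0}=\nu^{\Gamma_0}$, which I would state explicitly before assembling the estimate.
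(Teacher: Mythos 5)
Your argument is correct, but it is a genuinely different route from the paper's. The paper proves \eqref{eq:Laplacebeltrami} by a direct pointwise computation: it expands $(\Delta_\Gamma f)\circ\phi$ through the chain rule of Lemma \ref{theorem:surfGrad} in the components $b^{ij}$ of $B^{-1}$, rewrites the result in divergence form, and then shows the two leftover non-divergence terms cancel using Jacobi's formula for $\ud_j^{\Gamma_0}\det B$, the formula for $\ud_j^{\Gamma_0}B^{-1}$, the symmetry of the Weingarten map, the commutator rule for tangential derivatives, and the orthogonality relations $\nabla_{\Gamma_0}\phi^{\top}(\nu^{\Gamma}\circ\phi)=0$, $G_{\Gamma_0}^{-1}\nu^{\Gamma_0}=\nu^{\Gamma_0}$. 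You instead test against $\eta$, transport both sides of Green's formula on $\Gamma$ to $\Gamma_0$ via $dA_\Gamma=\sqrt{g_{\Gamma_0}}\,dA_{\Gamma_0}$ and \eqref{eq:Surfgrad}, simplify the integrand to $G_{\Gamma_0}^{-1}\nabla_{\Gamma_0}\hat f\cdot\nabla_{\Gamma_0}\eta$ (this step is right: $\nabla_{\Gamma_0}\phi^{\top}\nabla_{\Gamma_0}\phi=G_{\Gamma_0}-\nu^{\Gamma_0}\otimes\nu^{\Gamma_0}$ and $G_{\Gamma_0}^{-1}\nabla_{\Gamma_0}\eta$ is tangential), integrate back by parts on $\Gamma_0$, and invoke the fundamental lemma. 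What your route buys is the complete avoidance of the index gymnastics: all the cancellations the paper does by hand are absorbed into the two Green's formulas. What it costs is a slightly heavier set of prerequisites: you need the measure identity $dA_\Gamma=\sqrt{g_{\Gamma_0}}\,dA_{\Gamma_0}$, which the paper only asserts without proof just before Lemma \ref{theorem:surfGrad} (it follows from the local factorisation $G=\nabla\sigma^{\top}(G_{\Gamma_0}\circ\sigma)\nabla\sigma$, so you should either prove it or cite it explicitly); you should state that in both integrations by parts the mean-curvature terms drop because $\nabla_\Gamma f$ and $\sqrt{g_{\Gamma_0}}\,G_{\Gamma_0}^{-1}\nabla_{\Gamma_0}\hat f$ are tangential fields (the latter again via $G_{\Gamma_0}^{-1}\nu^{\Gamma_0}=\nu^{\Gamma_0}$), and handle a possible boundary by taking $\eta$ compactly supported in the interior, which suffices since the identity is local; and your argument implicitly upgrades the hypothesis from pointwise twice differentiable to $f$ of class $C^2$ (or $H^2$) with $\phi$, $\Gamma_0$ smooth enough that both sides are continuous, so that the a.e.\ identity becomes an everywhere identity --- acceptable under the paper's standing ``sufficiently smooth'' convention, but worth flagging, whereas the paper's pointwise computation needs no integrability or continuity of the second derivatives at all.
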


\begin{proof}
By the chain rule for tangential gradients (\ref{eq:Surfgrad}), we can express the Laplace-Beltrami operator as
\begin{align*}
(\Delta_{\Gamma}f) \circ \phi &= \sum\limits_{i=1}^{n+1} (\ud_i^{\Gamma}\ud_i^{\Gamma}f ) \circ \phi
= \sum\limits_{i,j=1}^{n+1} b^{ji} \ud_j^{\Gamma_0} \left(\ud_i^{\Gamma}f \circ \phi\right)
=\sum\limits_{i,j,k=1}^{n+1}b^{ji}\ud_j^{\Gamma_0} \left( b^{ki} \ud_k^{\Gamma_0} \hat{f} \right).
\intertext{Writing in divergence form gives}
(\Delta_{\Gamma}f) \circ \phi&= \sum\limits_{i,j,k=1}^{n+1} \frac{1}{b} \ud_j^{\Gamma_0} \left( b b^{ji} b^{ki} \ud_k^{\Gamma_0} \hat{f} \right)
- \sum\limits_{i,j,k=1}^{n+1} \frac{1}{b} \ud_j^{\Gamma_0}b\, b^{ji} b^{ki} \ud_k^{\Gamma_0}\hat{f}
- \sum\limits_{i,j,k=1}^{n+1} \ud_j^{\Gamma_0}b^{ji} \,b^{ki} \ud_k^{\Gamma_0}\hat{f}\\
&= \frac{1}{b} \nabla_{\Gamma_0} \cdot \left( b B^{-1} B^{-\top} \nabla_{\Gamma_0}\hat{f}\right) +\RN{1}+\RN{2}\\
&=\frac{1}{\sqrt{g_{\Gamma_0}}} \nabla_{\Gamma_0}\cdot \left( \sqrt{g_{\Gamma_0}} G_{\Gamma_0}^{-1} \nabla_{\Gamma_0}\hat{f} \right) +\RN{1}+\RN{2}.
\end{align*}
The last step follows from the observations (\ref{eq:BandG}) and (\ref{eq:bandg}).
We continue by proving that the remaining terms vanish. Recalling Jacobi's formula 
for the derivative of a determinant $\ud_j^{\Gamma_0} \text{det} B = \text{det} B\, \text{trace} \left(B^{-1} \ud_j^{\Gamma_0} B \right)$ 
and computing the derivative of the inverse matrix $\ud_j^{\Gamma_0} B^{-1} = - B^{-1} \ud_j^{\Gamma_0}B B^{-1}$ gives
$$
\frac{1}{b} \ud_j^{\Gamma_0} b = \sum\limits_{l,m=1}^{n+1} b^{lm}\ud_j^{\Gamma_0}b_{ml}, \qquad \ud_j^{\Gamma_0}b^{ji} = - \sum\limits_{l,m=1}^{n+1} b^{jm} \ud_{j}^{\Gamma_0}b_{ml} b^{li}.
$$
 It therefore follows after relabelling indices that
\begin{align}
\RN{1} + \RN{2}&= - \sum_{i,j,k,l,m} b^{lm} \ud_j^{\Gamma_0}b_{ml} b^{ji} b^{ki}\ud_k^{\Gamma_0} \hat{f} \nonumber
+ \sum_{i,j,k,l,m} b^{jm}\ud_j^{\Gamma_0} b_{ml} b^{li} b^{ki}\ud_k^{\Gamma_0} \hat{f}\\
&= \sum_{i,j,k,l,m} b^{lm} \left( \ud_{l}^{\Gamma_0} b_{mj} - \ud_j^{\Gamma_0} b_{ml} \right) b^{ji} b^{ki} \ud_k^{\Gamma_0}\hat{f}\label{I:II}
\end{align}
Differentiating $b_{mj}:= \ud_j^{\Gamma_0}\phi_m + (\nu_m^{\Gamma}\circ \phi) \nu_j^{\Gamma_0}$ yields
\begin{align*}
    \ud_l^{\Gamma_0}b_{mj} - \ud_{j}^{\Gamma_0} b_{ml} 
    &= \ud_l^{\Gamma_0}\ud_j^{\Gamma_0}\phi_m - \ud_j^{\Gamma_0}\ud_l^{\Gamma_0}\phi_m
    + \ud_l^{\Gamma_0}(\nu_m^{\Gamma}\circ \phi) \nu_j^{\Gamma_0} - \ud_j^{\Gamma_0}(\nu_m^{\Gamma}\circ \phi) \nu_l^{\Gamma_0}\\
    &+ (\nu_m^{\Gamma}\circ \phi) \ud_l^{\Gamma_0} \nu_j^{\Gamma_0} - (\nu_m^{\Gamma}\circ \phi) \ud_j^{\Gamma_0}\nu_l^{\Gamma_0}.
\end{align*}
By the symmetry of the Weingarten map $\ud_l^{\Gamma_0}\nu_j^{\Gamma_0} = \ud_j^{\Gamma_0}\nu_l^{\Gamma_0}$, we see that the last two terms cancel. 
We next interchange tangential derivatives \cite[Lemma 2.6]{dziuk2013finite}   
$$
\ud_l^{\Gamma_0}\ud_j^{\Gamma_0} \phi_m - \ud_j^{\Gamma_0}\ud_l^{\Gamma_0} \phi_m
= \left(\mathcal{H}^{\Gamma_0}\nabla_{\Gamma_0} \phi_m \right)_j \nu_l^{\Gamma_0} - 
\left(\mathcal{H}^{\Gamma_0}\nabla_{\Gamma_0} \phi_m \right)_l \nu_j^{\Gamma_0}
$$
to obtain
\begin{align*}
    \ud_l^{\Gamma_0}b_{mj} - \ud_{j}^{\Gamma_0} b_{ml} 
    &= \left( \ud_l^{\Gamma_0}(\nu_m^{\Gamma}\circ \phi) - ( \mathcal{H}^{\Gamma_0}\nabla_{\Gamma_0}\phi_m)_l \right) \nu_j^{\Gamma_0}
    +\left( (\mathcal{H}^{\Gamma_0}\nabla_{\Gamma_0}\phi_m)_j - \ud_j^{\Gamma_0}(\nu_m^{\Gamma}\circ \phi)  \right) \nu_l^{\Gamma_0}.
\end{align*}
Substituting into (\ref{I:II}), we arrive at the following expression for the remaining terms 
\begin{align*}
\RN{1} + \RN{2} &= trace \left( B^{-1}\nabla_{\Gamma_0}(\nu^{\Gamma}\circ \phi) -  B^{-1}\nabla_{\Gamma_0}\phi \mathcal{H}^{\Gamma_0} \right) B^{-\top} \nu^{\Gamma_0} \cdot B^{-\top} \nabla_{\Gamma_0} \hat{f} \\
&+  \mathcal{H}^{\Gamma_0}\nabla_{\Gamma_0}\phi^{\top}  B^{-\top}\nu^{\Gamma_0} \cdot B^{-1}B^{-\top}\nabla_{\Gamma_0}\hat{f}\\
&- \nabla_{\Gamma_0}(\nu^{\Gamma}\circ \phi)^{\top}B^{-\top}\nu^{\Gamma_0} \cdot B^{-1}B^{-\top}\nabla_{\Gamma_0}\hat{f}.
\end{align*}
Examining the first term, we have
\[
B^{-\top}\nu^{\Gamma_0} \cdot B^{-\top} \nabla_{\Gamma_0} \hat{f} 
= B^{-1}B^{-\top} \nu^{\Gamma_0} \cdot \nabla_{\Gamma_0} \hat{f} = G_{\Gamma_0}^{-1} \nu^{\Gamma_0}\cdot \nabla_{\Gamma_0}\hat{f}.
\]
Since $G_{\Gamma_0}= \nabla_{\Gamma_0}\phi^{\top}\nabla_{\Gamma_0} \phi + \nu^{\Gamma_0}\otimes \nu^{\Gamma_0}$
and thus $G_{\Gamma_0}^{-1}\nu^{\Gamma_0} = \nu^{\Gamma_0}$, the first term vanishes.
For the second and third term, we observe that
\[
B^{-\top}\nu^{\Gamma_0} = B B^{-1}B^{-\top} \nu^{\Gamma_0} = B G_{\Gamma_0}^{-1}\nu^{\Gamma_0} = B \nu^{\Gamma_0} = \nu^{\Gamma}\circ \phi.
\]
Therefore as a consequence of the orthogonality results $\nabla_{\Gamma_0}\phi^{\top} (\nu^{\Gamma}\circ \phi) = 0$ and 
$\nabla_{\Gamma_0}(\nu^{\Gamma}\circ \phi)^{\top}(\nu^{\Gamma}\circ \phi) = 0$ which can be seen by
$$
\ud_i^{\Gamma_0}(\nu^{\Gamma}\circ \phi) \cdot (\nu^{\Gamma}\circ \phi) = \frac{1}{2}\ud_i^{\Gamma}|\nu^{\Gamma}\circ \phi|^2 = 0,
$$
we conclude $\RN{1} + \RN{2} = 0$.
\end{proof}
We next compute the specific form of the coefficients appearing in the pull-back of the tangential gradient (\ref{eq:Surfgrad}) and 
the Laplace-Beltrami operator (\ref{eq:Laplacebeltrami}), for the particular case of a graph-like parametrisation over the reference surface.
\begin{lemma}[Graphical case]
\label{lemma:graphCoef}
Assuming that the parametrisation of the hypersurface $\Gamma$
has the particular graph-like representation described in (\ref{Geometric:setting2}) for a given height function
$h: \Gamma_0\rightarrow \re$, then the inverse and determinant of the tensor $G_{\Gamma_0}$ defined in (\ref{eq:G_eq}) simplify to give
\begin{align}
\label{eq:Ggraph}
G_{\Gamma_0}^{-1} &= A  \left( I - \frac{A  \nabla_{\Gamma_0}h  \otimes A  \nabla_{\Gamma_0}h }
{1 + |A  \nabla_{\Gamma_0}h |^2}\right) A \\
\label{eq:ggraph}
\sqrt{g_{\Gamma_0}}  &=\sqrt{1+ |A  \nabla_{\Gamma_0}h |^2} \prod_{j=1}^n ( 1 +h  \kappa_j^{\Gamma_0}).
\end{align}
Here $A:= \left( I +h \mathcal{H}^{\Gamma_0}\right)^{-1}$ and $\{\kappa_j^{\Gamma_0}\}_j$ denotes the non-zero eigenvalues of the extended Weingarten map $\mathcal{H}^{\Gamma_0}$.
\end{lemma}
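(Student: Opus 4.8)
The plan is to compute the tangential gradient of the graph-like parametrisation explicitly, recognise $G_{\Gamma_0}$ as a rank-one update of the simple symmetric matrix $(I+h\mathcal{H}^{\Gamma_0})^{2} = A^{-2}$, and then read off the inverse and the determinant from the Sherman--Morrison formula and the matrix determinant lemma. First I would differentiate $\phi = \mathrm{id} + h\,\nu^{\Gamma_0}$. Using $\nabla_{\Gamma_0}\mathrm{id} = \mathcal{P}_{\Gamma_0}$, $\nabla_{\Gamma_0}\nu^{\Gamma_0} = \mathcal{H}^{\Gamma_0}$ and the product rule for tangential derivatives, this gives
\[
\nabla_{\Gamma_0}\phi = \mathcal{P}_{\Gamma_0} + h\,\mathcal{H}^{\Gamma_0} + \nu^{\Gamma_0}\otimes\nabla_{\Gamma_0}h =: M + \nu^{\Gamma_0}\otimes\nabla_{\Gamma_0}h,
\]
and I would record the facts used below: $\mathcal{P}_{\Gamma_0},\mathcal{H}^{\Gamma_0}$ are symmetric, $\mathcal{P}_{\Gamma_0}^{2}=\mathcal{P}_{\Gamma_0}$, $\mathcal{H}^{\Gamma_0}\nu^{\Gamma_0}=0$ (so $M\nu^{\Gamma_0}=0$) and $\mathcal{P}_{\Gamma_0}\mathcal{H}^{\Gamma_0}=\mathcal{H}^{\Gamma_0}\mathcal{P}_{\Gamma_0}=\mathcal{H}^{\Gamma_0}$ (because the range of the symmetric operator $\mathcal{H}^{\Gamma_0}$ is orthogonal to its kernel $\mathrm{span}\{\nu^{\Gamma_0}\}$, hence lies in $T\Gamma_0$).

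Substituting into $G_{\Gamma_0} = \nabla_{\Gamma_0}\phi^{\top}\nabla_{\Gamma_0}\phi + \nu^{\Gamma_0}\otimes\nu^{\Gamma_0}$ and expanding, the cross terms $M(\nu^{\Gamma_0}\otimes\nabla_{\Gamma_0}h)$ and $(\nabla_{\Gamma_0}h\otimes\nu^{\Gamma_0})M$ vanish since $M\nu^{\Gamma_0}=0$ and $M$ is symmetric, while $(\nabla_{\Gamma_0}h\otimes\nu^{\Gamma_0})(\nu^{\Gamma_0}\otimes\nabla_{\Gamma_0}h) = \nabla_{\Gamma_0}h\otimes\nabla_{\Gamma_0}h$ since $|\nu^{\Gamma_0}|=1$. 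Hence $G_{\Gamma_0} = M^{2} + \nu^{\Gamma_0}\otimes\nu^{\Gamma_0} + \nabla_{\Gamma_0}h\otimes\nabla_{\Gamma_0}h$, and using the identities above one finds $M^{2} = \mathcal{P}_{\Gamma_0} + 2h\mathcal{H}^{\Gamma_0} + h^{2}(\mathcal{H}^{\Gamma_0})^{2}$, so that $M^{2} + \nu^{\Gamma_0}\otimes\nu^{\Gamma_0} = I + 2h\mathcal{H}^{\Gamma_0} + h^{2}(\mathcal{H}^{\Gamma_0})^{2} = (I+h\mathcal{H}^{\Gamma_0})^{2} = A^{-2}$. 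This gives the clean intermediate identity $G_{\Gamma_0} = A^{-2} + \nabla_{\Gamma_0}h\otimes\nabla_{\Gamma_0}h$.

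From here the two claims are standard linear algebra. Since $A$ is symmetric and invertible, Sherman--Morrison applied to $A^{-2} + \nabla_{\Gamma_0}h\otimes\nabla_{\Gamma_0}h$ gives
\[
G_{\Gamma_0}^{-1} = A^{2} - \frac{(A^{2}\nabla_{\Gamma_0}h)\otimes(A^{2}\nabla_{\Gamma_0}h)}{1 + \nabla_{\Gamma_0}h\cdot A^{2}\nabla_{\Gamma_0}h},
\]
and rewriting $A^{2}\nabla_{\Gamma_0}h = A(A\nabla_{\Gamma_0}h)$, $\nabla_{\Gamma_0}h\cdot A^{2}\nabla_{\Gamma_0}h = |A\nabla_{\Gamma_0}h|^{2}$ (both by symmetry of $A$) and pulling one factor of $A$ out on each side recovers (\ref{eq:Ggraph}). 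For the determinant, the matrix determinant lemma gives $g_{\Gamma_0} = \det(A^{-2})\bigl(1 + |A\nabla_{\Gamma_0}h|^{2}\bigr)$; since $\mathcal{H}^{\Gamma_0}$ has eigenvalue $0$ along $\nu^{\Gamma_0}$ and non-zero eigenvalues $\kappa_{1}^{\Gamma_0},\dots,\kappa_{n}^{\Gamma_0}$, the matrix $A^{-1} = I + h\mathcal{H}^{\Gamma_0}$ has eigenvalues $1$ and $1+h\kappa_{j}^{\Gamma_0}$, whence $\det(A^{-2}) = \prod_{j=1}^{n}(1+h\kappa_{j}^{\Gamma_0})^{2}$; taking positive square roots (legitimate because $\phi$ is a diffeomorphism for the $h$ in question) yields (\ref{eq:ggraph}).

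The only delicate step is the identification $G_{\Gamma_0} = A^{-2} + \nabla_{\Gamma_0}h\otimes\nabla_{\Gamma_0}h$: one must be careful about the order in which the rank-one tensors are composed and must repeatedly invoke $\mathcal{H}^{\Gamma_0}\nu^{\Gamma_0}=0$ and $\mathcal{P}_{\Gamma_0}\mathcal{H}^{\Gamma_0}=\mathcal{H}^{\Gamma_0}$ in order to collapse $M^{2}+\nu^{\Gamma_0}\otimes\nu^{\Gamma_0}$ down to $(I+h\mathcal{H}^{\Gamma_0})^{2}$. A minor secondary point is keeping the $n+1$ versus $n$ dimensional bookkeeping straight when evaluating $\det(A^{-2})$, resolved by noting that $A$ restricts to the identity on $\mathrm{span}\{\nu^{\Gamma_0}\}$; once these are in place the remaining computations are routine.
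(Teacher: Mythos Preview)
Your proof is correct and follows essentially the same route as the paper: compute $\nabla_{\Gamma_0}\phi$, expand to obtain $G_{\Gamma_0} = (I+h\mathcal{H}^{\Gamma_0})^{2} + \nabla_{\Gamma_0}h\otimes\nabla_{\Gamma_0}h$, and then invoke the rank-one inverse and determinant identities. The paper factors $G_{\Gamma_0} = A^{-1}(I + A\nabla_{\Gamma_0}h\otimes A\nabla_{\Gamma_0}h)A^{-1}$ before applying $(I+a\otimes b)^{-1} = I - \tfrac{a\otimes b}{1+a\cdot b}$ and $\det(I+a\otimes b)=1+a\cdot b$, whereas you apply Sherman--Morrison and the matrix determinant lemma directly to $A^{-2}+\nabla_{\Gamma_0}h\otimes\nabla_{\Gamma_0}h$; these are the same identities in slightly different packaging.
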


\begin{proof}
Differentiating the given surface parametrisation $\phi(x) = x + h(x) \nu^{\Gamma_0}(x)$, we obtain
\begin{equation*}
\nabla_{\Gamma_0}\phi = \mathcal{P}_{\Gamma_0} + h \mathcal{H}^{\Gamma_0} + \nu^{\Gamma_0}\otimes \nabla_{\Gamma_0}h.
\end{equation*}
Expanding the tensor $G_{\Gamma_0} = \nabla_{\Gamma_0}\phi^{\top}\nabla_{\Gamma_0}\phi + \nu^{\Gamma_0}\otimes \nu^{\Gamma_0}$
and cancelling the orthogonal terms with the tensor product identity $(a\otimes b)(c\otimes d) = (b\cdot c)a\otimes d$, 
yields
\begin{align*}
G_{\Gamma_0}  &= \left(
\mathcal{P}_{\Gamma_0} + h \mathcal{H}^{\Gamma_0} +\nabla_{\Gamma_0}h \otimes \nu^{\Gamma_0}
\right)
\left(
\mathcal{P}_{\Gamma_0} + h \mathcal{H}^{\Gamma_0} + \nu^{\Gamma_0}\otimes \nabla_{\Gamma_0}h
\right)
+ \nu^{\Gamma_0} \otimes \nu^{\Gamma_0}
\\
&=\left( I + h \mathcal{H}^{\Gamma_0}\right)^2 + \nabla_{\Gamma_0}h \otimes \nabla_{\Gamma_0}h\\
&= A^{-1}\left( I + A \nabla_{\Gamma_0}h \otimes A \nabla_{\Gamma_0}h \right) A^{-1}.
\end{align*}
Taking the inverse with the identity $\left(I + a \otimes b\right)^{-1}= I - \frac{a\otimes b}{1 + a \cdot b}$ we obtain (\ref{eq:Ggraph}). 
For (\ref{eq:ggraph}), we take the determinant and apply $\det(I+ a\otimes b) = 1 + a \cdot b$, which leads to
\begin{equation*}
\det(G_{\Gamma_0} ) = \left(1 + |A \nabla_{\Gamma_0}h|^2 \right)\det(A^{-1})^2.
\end{equation*}
Since $A^{-1}= I + h \mathcal{H}^{\Gamma_0}$ has eigenvalues $1$ and $\{1+ h \kappa_j^{\Gamma_0} \}_{j=1}^n$, we deduce
$
\det(A^{-1}) = \prod_{j=1}^n\left(1 + h \kappa_j^{\Gamma_0}\right)
$
and thus obtain the stated result for $\sqrt{g_{\Gamma_0}} = \sqrt{ det G_{\Gamma_0}}$.
\end{proof}

\subsection{The unit normal and extended Weingarten map}
We continue by computing expressions for the pull-back onto the reference surface $\Gamma_0$, of the unit normal $\nu^{\Gamma}$ and extended Weingarten map
$\mathcal{H}^{\Gamma}$ for a general parametrised hypersurface $\Gamma$ as given in (\ref{Geometric:setting1}). 
To obtain an expression for the unit normal, we smoothly extend
the given surface parametrisation $\phi: \Gamma_0 \rightarrow \Gamma$ to a $C^1-$diffeomorphic mapping
$\bar{\phi}: U \rightarrow V$ 
between some open sets $U$ and $V$ containing $\Gamma_0$ and $\Gamma$ respectively. 
The existence of such a mapping is gauranteed by the Whitney extension theorem \cite{whitney1934analytic}. 
We now have a level-set description of $\Gamma$
\[
 \Gamma = \{ x \in V \, | \, d^{\Gamma_0}( \bar{\phi}^{-1}(x) ) = 0 \}
\]
consequently leading to the following expression for the unit normal vector field due to (\ref{eq:UnitNormalFormula}).
\begin{lemma}[Unit normal]\label{theorem:unitNormal}
 The pull-back of the unit normal vector field $\nu^{\Gamma}$ of the parametrised surface $\Gamma$ described in (\ref{Geometric:setting1}) onto the reference surface $\Gamma_0$,
 is given by 
 \begin{equation}\label{eq:UnitNormal}
  \nu^{\Gamma} \circ \phi = \pm \frac{\nabla \bar{\phi}^{-\top} \nu^{\Gamma_0}}{|\nabla \bar{\phi}^{-\top} \nu^{\Gamma_0}|}.
 \end{equation}

\end{lemma}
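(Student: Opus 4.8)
The plan is to use the level-set representation of $\Gamma$ recorded just before the statement, namely $\Gamma = \{x \mid \varphi(x) = 0\}$ with $\varphi := d^{\Gamma_0} \circ \bar\phi^{-1}$, feed it into the unit normal formula (\ref{eq:UnitNormalFormula}), and then differentiate. First I would fix the domain: after possibly shrinking $V$ we may assume $\bar\phi^{-1}(V)\subset U_\delta$, the Fermi tubular neighbourhood of $\Gamma_0$ from Lemma \ref{lemma:fermi}. Since $\Gamma_0\in C^2$ we have $d^{\Gamma_0}\in C^2(U_\delta)$, and $\bar\phi^{-1}$ is $C^1$, so $\varphi\in C^1(V)$; together with the displayed identity $\Gamma = \{x\in V \mid \varphi(x)=0\}$ this means that, once $\nabla\varphi\ne 0$ on $\Gamma$ is verified, (\ref{eq:UnitNormalFormula}) applies and gives $\nu^{\Gamma} = \pm\,\nabla\varphi/|\nabla\varphi|$ on $\Gamma$.

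Next I would compute $\nabla\varphi$ by the chain rule, $\nabla\varphi(x) = \big(\nabla\bar\phi^{-1}(x)\big)^{\top}\big(\nabla d^{\Gamma_0}\big)(\bar\phi^{-1}(x))$, where $\nabla\bar\phi^{-1}$ is the Jacobian of $\bar\phi^{-1}$ and the transpose sits on the left by the same convention used in (\ref{eq:ProofSurfaceGrad}). For $x\in\Gamma$ one has $\bar\phi^{-1}(x)\in\Gamma_0$, because $\bar\phi$ restricts to the diffeomorphism $\phi:\Gamma_0\to\Gamma$ and hence $\bar\phi^{-1}(\Gamma)=\Gamma_0$; consequently the closest point on $\Gamma_0$ to $\bar\phi^{-1}(x)$ is itself, i.e. $a^{\Gamma_0}(\bar\phi^{-1}(x)) = \bar\phi^{-1}(x)$, and (\ref{fermi:d}) yields $\big(\nabla d^{\Gamma_0}\big)(\bar\phi^{-1}(x)) = \nu^{\Gamma_0}(\bar\phi^{-1}(x))$. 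Now pull back along $\phi$: for $p\in\Gamma_0$ set $x=\phi(p)=\bar\phi(p)$, so that $\bar\phi^{-1}(x)=p$ and, by the inverse function theorem, $\nabla\bar\phi^{-1}(\phi(p)) = \big(\nabla\bar\phi(p)\big)^{-1}$. Hence
\[
(\nabla\varphi)\circ\phi = \big(\nabla\bar\phi\big)^{-\top}\,\nu^{\Gamma_0} \qquad \text{on }\Gamma_0.
\]
Since $\bar\phi$ is a diffeomorphism the matrix $\big(\nabla\bar\phi\big)^{-\top}$ is invertible and $\nu^{\Gamma_0}\ne 0$, so this vector never vanishes; in particular $\nabla\varphi\ne 0$ on $\Gamma$, which retroactively legitimises the use of (\ref{eq:UnitNormalFormula}). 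Normalising and composing with $\phi$ then gives exactly (\ref{eq:UnitNormal}).

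The only delicate points here are bookkeeping rather than substance: keeping the Jacobian transpose on the correct side when differentiating the composition $d^{\Gamma_0}\circ\bar\phi^{-1}$, and recording that $\nabla d^{\Gamma_0}$ restricted to $\Gamma_0$ equals $\nu^{\Gamma_0}$ (the $d^{\Gamma_0}=0$ slice of (\ref{fermi:d})). The $C^1$ regularity of $\varphi$ and the non-vanishing of $\nabla\varphi$ are immediate from $\bar\phi$ being a $C^1$-diffeomorphism and $\Gamma_0\in C^2$, so no cancellation argument is needed and there is no real obstacle beyond this.
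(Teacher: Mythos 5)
Your proposal is correct and follows essentially the same route as the paper: the paper obtains the lemma precisely from the level-set description $\Gamma=\{x\in V \mid d^{\Gamma_0}(\bar\phi^{-1}(x))=0\}$ together with the unit normal formula (\ref{eq:UnitNormalFormula}), and your argument simply makes explicit the chain-rule computation, the use of (\ref{fermi:d}) on $\Gamma_0$, the inverse function theorem, and the non-vanishing of $\nabla\varphi$, all of which the paper leaves implicit.
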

Note that (\ref{eq:UnitNormal}) can be shown to be independent of the extension chosen. 
As an example of a possible extension of the given surface parametrisation, we now consider the case of a graph-like surface.

\begin{corollary}[Graphical case]
Assuming that the hypersurface $\Gamma$ has the particular graph-like form described in (\ref{Geometric:setting2}), then the pull-back of the unit 
normal vector field $\nu^{\Gamma}$ is given by
 \begin{equation}
  \nu^{\Gamma}\circ \phi = \frac{\nu^{\Gamma_0}- A \nabla_{\Gamma_0}h }{|\nu^{\Gamma_0}- A \nabla_{\Gamma_0}h |}.
 \end{equation}
 Here the orientation has been chosen to coincide with the reference surface $\Gamma_0$ when the height function is identically zero.
 Recall that $A := (I + h \mathcal{H}^{\Gamma_0})^{-1}$.  
\end{corollary}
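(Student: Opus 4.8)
The plan is to realise $\Gamma$ as the zero level set of an explicit $C^1$ function on the Fermi tubular neighbourhood $U_\delta$ of $\Gamma_0$ from Lemma \ref{lemma:fermi} and to apply the normal formula (\ref{eq:UnitNormalFormula}); this amounts to evaluating the extension-independent formula (\ref{eq:UnitNormal}) of Lemma \ref{theorem:unitNormal} for the $C^1$ extension $\bar\phi(x) = x + (h\circ a^{\Gamma_0})(x)\,\nabla d^{\Gamma_0}(x)$ of the graph map $\phi$ (which is a diffeomorphism in a neighbourhood of $\Gamma_0$). Assuming, consistently with the smallness hypotheses on the parametrisation, that $|h|<\delta$ so that $\phi(\Gamma_0)\subset U_\delta$, the Fermi decomposition $y = a^{\Gamma_0}(y) + d^{\Gamma_0}(y)\,\nu^{\Gamma_0}(a^{\Gamma_0}(y))$ compared with $\phi(x) = x + h(x)\,\nu^{\Gamma_0}(x)$ shows, by uniqueness of the decomposition, that $y\in\Gamma$ if and only if $d^{\Gamma_0}(y) = h(a^{\Gamma_0}(y))$; thus $\Gamma = \{\varphi = 0\}$ with $\varphi(y) := d^{\Gamma_0}(y) - h(a^{\Gamma_0}(y))$, and moreover $a^{\Gamma_0}(\phi(x)) = x$, $d^{\Gamma_0}(\phi(x)) = h(x)$.

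It remains to differentiate $\varphi$ at points of $\Gamma$. By (\ref{fermi:d}), $\nabla d^{\Gamma_0}(y) = \nu^{\Gamma_0}(a^{\Gamma_0}(y))$. The chain rule gives $\nabla(h\circ a^{\Gamma_0})(y) = \nabla a^{\Gamma_0}(y)^{\top}\nabla_{\Gamma_0}h(a^{\Gamma_0}(y))$ — the tangential gradient of $h$ suffices since $\nabla a^{\Gamma_0}(y)$ kills the normal direction by (\ref{fermi:a}) — and, using that $\mathcal{H}^{\Gamma_0}$ is symmetric with $\mathcal{H}^{\Gamma_0}\nu^{\Gamma_0} = 0$ and $\mathcal{H}^{\Gamma_0}(T\Gamma_0)\subseteq T\Gamma_0$ (so that $(I + d^{\Gamma_0}\mathcal{H}^{\Gamma_0})^{-1}$ is symmetric and commutes with $\mathcal{P}_{\Gamma_0}$), formula (\ref{fermi:a}) reduces this to $\nabla(h\circ a^{\Gamma_0})(y) = (I + d^{\Gamma_0}(y)\mathcal{H}^{\Gamma_0}(a^{\Gamma_0}(y)))^{-1}\nabla_{\Gamma_0}h(a^{\Gamma_0}(y))$. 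Evaluating at $y = \phi(x)$ and recalling $d^{\Gamma_0}(\phi(x)) = h(x)$ and $A = (I + h\mathcal{H}^{\Gamma_0})^{-1}$ yields $\nabla\varphi(\phi(x)) = \nu^{\Gamma_0}(x) - A(x)\nabla_{\Gamma_0}h(x)$.

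Since $A\nabla_{\Gamma_0}h$ is tangential, $\nabla\varphi\neq 0$ along $\Gamma$, and (\ref{eq:UnitNormalFormula}) applied to $\varphi$ gives $\nu^{\Gamma}\circ\phi = \pm(\nu^{\Gamma_0} - A\nabla_{\Gamma_0}h)/|\nu^{\Gamma_0} - A\nabla_{\Gamma_0}h|$; the $+$ sign is forced by the required normalisation, since for $h\equiv 0$ one has $\varphi = d^{\Gamma_0}$ and hence $\nabla\varphi = \nu^{\Gamma_0}$. (Alternatively, without passing through $\varphi$, one computes $\nabla\bar\phi|_{\Gamma_0} = (I + h\mathcal{H}^{\Gamma_0}) + \nu^{\Gamma_0}\otimes\nabla_{\Gamma_0}h$ from Lemma \ref{lemma:fermi} and the formula for $\nabla_{\Gamma_0}\phi$ derived in the proof of Lemma \ref{lemma:graphCoef}, and solves $(\nabla\bar\phi)^{\top}w = \nu^{\Gamma_0}$ by splitting $w$ into its tangential and normal parts to obtain $w = \nu^{\Gamma_0} - A\nabla_{\Gamma_0}h = (\nabla\bar\phi)^{-\top}\nu^{\Gamma_0}$ in (\ref{eq:UnitNormal}).) The computation is otherwise routine; the only points needing attention are verifying $\phi(\Gamma_0)\subset U_\delta$ so that the Fermi apparatus applies and correctly tracking the curvature factor $A = (I + h\mathcal{H}^{\Gamma_0})^{-1}$ when differentiating the normal extension $h\circ a^{\Gamma_0}$, which is immediate once (\ref{fermi:a}) is in hand.
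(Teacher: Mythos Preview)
Your proof is correct. Your primary route---writing $\Gamma$ as the zero set of the explicit function $\varphi = d^{\Gamma_0} - h\circ a^{\Gamma_0}$ on $U_\delta$ and differentiating $\varphi$ directly via the Fermi formulas (\ref{fermi:d}) and (\ref{fermi:a})---differs from the paper's. The paper instead constructs the same extension $\bar\phi$ you name, computes $\nabla\bar\phi$ on $\Gamma_0$ explicitly as $\nabla\bar\phi = (I + \nu^{\Gamma_0}\otimes A\nabla_{\Gamma_0}h)A^{-1}$, inverts this using the rank-one identity $(I + a\otimes b)^{-1} = I - \frac{a\otimes b}{1+a\cdot b}$, and then evaluates $\nabla\bar\phi^{-\top}\nu^{\Gamma_0}$ as in (\ref{eq:UnitNormal}). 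The two routes are equivalent since $\varphi = d^{\Gamma_0}\circ\bar\phi^{-1}$, but yours sidesteps the matrix inversion entirely; the parenthetical alternative you sketch is exactly the paper's computation. The paper's version has the minor benefit of exhibiting $\nabla\bar\phi^{-1}$ in closed form, which could be reused, while your level-set argument is more direct for the statement at hand.
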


\begin{proof}
We extend the given surface parametrisation $\phi:\Gamma_0\rightarrow \Gamma$ defined by 
$$
\phi(x) = x + h(x)\nu^{\Gamma_0}(x),
$$
to a thin tubular neighbourhood 
$
U = \{ x \in \re^{n+1} \, | \, |d^{\Gamma_0}(x)| < \delta \}
$
around $\Gamma_0$ of width $\delta>0$ as follows 
\begin{align*}
 \bar{\phi}(x) 
 &= \phi(a^{\Gamma_0}(x)) + d^{\Gamma_0}(x) \nu^{\Gamma_0}(a^{\Gamma_0}(x)) \\
 &= a^{\Gamma_0}(x) + \left(h(a^{\Gamma_0}(x)) +  d^{\Gamma_0}(x) \right) \nu^{\Gamma_0}(a^{\Gamma_0}(x)). 
\end{align*}
For $\delta>0$ sufficiently small, its image $V= \bar{\phi}(U)$ is contained within the neighbourhood in which the Fermi coordinates
$(a^{\Gamma_0}(x), d^{\Gamma_0}(x))$ are well defined. Consequently, the extension $\bar{\phi}:U\rightarrow V$ which equivalently acts upon the Fermi coordinates
as follows
\[
 (a^{\Gamma_0}(x), d^{\Gamma_0}(x)) \mapsto  \left(a^{\Gamma_0}(x), d^{\Gamma_0}(x)+ h(a^{\Gamma_0}(x)) \right) 
\]
can be seen to be a bijective mapping. 
Computing its derivative and evaluating on the reference surface $\Gamma_0$, recalling that $\nabla d^{\Gamma_0} = \nu^{\Gamma_0}$ and 
$\nabla a^{\Gamma_0} = \mathcal{P}_{\Gamma_0}$ on $\Gamma_0$ by (\ref{fermi:d}) and (\ref{fermi:a}), we obtain
\[
 \nabla \bar{\phi} = I + h \mathcal{H}^{\Gamma_0} + \nu^{\Gamma_0} \otimes \nabla_{\Gamma_0}h = \left( I + \nu^{\Gamma_0} \otimes A \nabla_{\Gamma_0}h \right) A^{-1}.
\]
Hence taking the inverse with the identity $( I + a\otimes b) ^{-1} = 1 - \frac{a\otimes b}{1 + a \cdot b}$ and recalling that $A\nu^{\Gamma_0} = \nu^{\Gamma_0}$, 
we deduce
\[
 \nabla \bar{\phi}^{-\top} \nu^{\Gamma_0} = \left( I - \nu^{\Gamma_0} \otimes A \nabla_{\Gamma_0}h \right) A \nu^{\Gamma_0} = \nu^{\Gamma_0} - A \nabla_{\Gamma_0}h
\]
and thus obtain the stated result. Note that $\nu^{\Gamma_0} - A \nabla_{\Gamma_0}h \neq 0$ 
since the matrix $A = (I + h \mathcal{H}^{\Gamma_0})^{-1}$ maps $A:T\Gamma_0 \rightarrow T\Gamma_0$.
\end{proof}

We next compute the pull-back of the extended Weingarten map $\mathcal{H}^{\Gamma}$ for a general parametrised surface $\Gamma$. 
Since the restriction of the derivative of the surface parametrisation maps 
$\nabla_{\Gamma_0}\phi(\cdot):T_{(\cdot)}\Gamma_0\rightarrow T_{\phi(\cdot)}\Gamma$, we consequently have 
\[
 \left(\nu^{\Gamma}\circ \phi\right) \cdot \ud_j^{\Gamma_0}\phi =0
\]
 for all $j=1,...,n+1$. Differentiating, we obtain
\begin{equation}
\label{eq:WeingPrelimEq}
 \ud_i^{\Gamma_0}\left( \nu^{\Gamma}\circ \phi \right) \cdot \ud_j^{\Gamma_0}\phi = - \left(\nu^{\Gamma}\circ \phi\right) \cdot \ud_i^{\Gamma_0}\ud_j^{\Gamma_0}\phi.
\end{equation}
Next, we define $L:\Gamma_0 \rightarrow \re^{(n+1)\times(n+1)}$ by
\begin{equation}
\label{eq:L}
 \left(L(x) \right)_{i,j} = \left(\nu^{\Gamma}\circ \phi\right)(x) \cdot \ud_i^{\Gamma_0}\ud_j^{\Gamma_0}\phi(x) \qquad x \in \Gamma_0,
\end{equation}
and rewrite (\ref{eq:WeingPrelimEq}) as  
\[
\nabla_{\Gamma_0}\left(\nu^{\Gamma}\circ \phi \right)^{\top} \nabla_{\Gamma_0}\phi = - L. 
\]
It therefore follows from an application of the chain rule and the symmetry of the extended Weingarten map that
\[
  \nabla_{\Gamma_0}\phi^{\top}\left(\mathcal{H}^{\Gamma}\circ \phi \right)\nabla_{\Gamma_0} \phi
  = \nabla_{\Gamma_0}\phi^{\top}\left( \nabla_{\Gamma} \nu^{\Gamma} \circ \phi \right)^{\top}\nabla_{\Gamma_0} \phi = - L.
\]
We can then extend the tangential derivative $\nabla_{\Gamma_0}\phi$ to an invertible matrix as previously discussed in (\ref{eq:Bdef}), to obtain 
the following result.

\begin{lemma}[Extended Weingarten map]
Let the orientation of the parametrised hypersurface $\Gamma$ described in (\ref{Geometric:setting1}), be fixed by a choice of a unit normal vector field $\nu^{\Gamma}$. 
Then the pull-back of the extended Weingarten map is given by
 \begin{equation}\label{eq:Weingarten}
  \mathcal{H}^{\Gamma}\circ \phi 
  = - \left( \nabla_{\Gamma_0} \phi + \nu^{\Gamma}\circ \phi \otimes \nu^{\Gamma_0} \right)^{-\top}
  L
  \left( \nabla_{\Gamma_0} \phi + \nu^{\Gamma}\circ \phi \otimes \nu^{\Gamma_0} \right)^{-1}.
 \end{equation}
\end{lemma}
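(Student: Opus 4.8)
The plan is to convert the singular identity $\nabla_{\Gamma_0}\phi^{\top}\left(\mathcal{H}^{\Gamma}\circ\phi\right)\nabla_{\Gamma_0}\phi = -L$, obtained just above the statement from (\ref{eq:WeingPrelimEq})--(\ref{eq:L}) and the symmetry of $\mathcal{H}^{\Gamma}$, into an invertible one. Since $\nabla_{\Gamma_0}\phi$ is not invertible, I would replace it throughout by the extension $B = \nabla_{\Gamma_0}\phi + \nu^{\Gamma}\circ\phi\otimes\nu^{\Gamma_0}$ of (\ref{eq:Bdef}), which is invertible with $\det B = \sqrt{g_{\Gamma_0}}\neq 0$ by (\ref{eq:bandg}), and prove that the same identity survives with $B$ in place of $\nabla_{\Gamma_0}\phi$:
\[
  B^{\top}\left(\mathcal{H}^{\Gamma}\circ\phi\right)B = -L.
\]
Left-multiplying by $B^{-\top}$ and right-multiplying by $B^{-1}$ then gives (\ref{eq:Weingarten}).

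To establish the displayed identity I would expand $B^{\top}(\mathcal{H}^{\Gamma}\circ\phi)B$ bilinearly as the ``diagonal'' term $\nabla_{\Gamma_0}\phi^{\top}(\mathcal{H}^{\Gamma}\circ\phi)\nabla_{\Gamma_0}\phi=-L$ plus three cross terms, each carrying a factor $\nu^{\Gamma}\circ\phi\otimes\nu^{\Gamma_0}$ on the left, on the right, or on both sides. The two that carry it on the right contain the subexpression $(\mathcal{H}^{\Gamma}\circ\phi)(\nu^{\Gamma}\circ\phi)=(\mathcal{H}^{\Gamma}\nu^{\Gamma})\circ\phi=0$, since the extended Weingarten map annihilates the unit normal, so they vanish. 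For the remaining term, $\bigl(\nu^{\Gamma_0}\otimes(\nu^{\Gamma}\circ\phi)\bigr)(\mathcal{H}^{\Gamma}\circ\phi)\nabla_{\Gamma_0}\phi$, I would use $(a\otimes b)M = a\otimes(M^{\top}b)$ together with the symmetry of $\mathcal{H}^{\Gamma}$ to rewrite it as $\nu^{\Gamma_0}\otimes\bigl(\nabla_{\Gamma_0}\phi^{\top}(\mathcal{H}^{\Gamma}\circ\phi)(\nu^{\Gamma}\circ\phi)\bigr)$, which vanishes for the same reason. Only the diagonal term remains, giving $-L$ as claimed.

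I would then record a short consistency check that the right-hand side of (\ref{eq:Weingarten}) really has the structure of an extended Weingarten map. It is symmetric because $L$ is: the failure of tangential second derivatives to commute, via the interchange formula \cite[Lemma 2.6]{dziuk2013finite} used in Lemma~\ref{theorem:LaplaceBelt}, contributes to the antisymmetric part of $L$ only a term proportional to $\nabla_{\Gamma_0}\phi^{\top}(\nu^{\Gamma}\circ\phi)=0$. It also annihilates $\nu^{\Gamma}\circ\phi$: one has $B^{-1}(\nu^{\Gamma}\circ\phi)=\nu^{\Gamma_0}$ (equivalently $B\nu^{\Gamma_0}=\nu^{\Gamma}\circ\phi$, as already used in the proof of Lemma~\ref{theorem:LaplaceBelt}), while $L\nu^{\Gamma_0}=0$ follows by applying $B^{\top}(\mathcal{H}^{\Gamma}\circ\phi)B=-L$ to $\nu^{\Gamma_0}$ and invoking $B\nu^{\Gamma_0}=\nu^{\Gamma}\circ\phi$ once more.

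The whole computation is routine; the single point that requires care is checking that adjoining the normal rank-one correction $\nu^{\Gamma}\circ\phi\otimes\nu^{\Gamma_0}$ to $\nabla_{\Gamma_0}\phi$ introduces no spurious contribution, and this rests entirely on the two orthogonality facts $\nabla_{\Gamma_0}\phi^{\top}(\nu^{\Gamma}\circ\phi)=0$ and $\mathcal{H}^{\Gamma}\nu^{\Gamma}=0$. So the ``obstacle'' here is bookkeeping rather than anything conceptual.
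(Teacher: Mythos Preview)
Your proposal is correct and follows exactly the paper's approach: the text preceding the lemma derives $\nabla_{\Gamma_0}\phi^{\top}(\mathcal{H}^{\Gamma}\circ\phi)\nabla_{\Gamma_0}\phi=-L$ and then simply says one can ``extend the tangential derivative $\nabla_{\Gamma_0}\phi$ to an invertible matrix as previously discussed in (\ref{eq:Bdef})'' to obtain the result, which is precisely the step you spell out in detail. Your verification that the three cross terms in $B^{\top}(\mathcal{H}^{\Gamma}\circ\phi)B$ vanish via $\mathcal{H}^{\Gamma}\nu^{\Gamma}=0$ (and symmetry) is the content the paper leaves implicit, and your consistency remarks on the symmetry of $L$ and the annihilation of $\nu^{\Gamma}\circ\phi$ anticipate the short discussion the paper gives immediately after the lemma.
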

Note that the matrix $L(x)$ given in (\ref{eq:L}) is symmetric even though the tangential derivatives do not necessarily commute, as 
by interchanging the derivatives we obtain 
 \begin{align*}
 (\nu^{\Gamma}\circ \phi) \cdot \ud_i^{\Gamma_0} \ud_j^{\Gamma_0}\phi 
 = (\nu^{\Gamma}\circ \phi) \cdot \ud_j^{\Gamma_0} \ud_i^{\Gamma_0}\phi 
 + \sum\limits_{m=1}^{n+1} \mathcal{H}_{jm}^{\Gamma_0} \left( \ud_m^{\Gamma_0} \phi \cdot\left( \nu^{\Gamma}\circ \phi\right) \right) \nu_i^{\Gamma_0}
 - \sum\limits_{m=1}^{n+1} \mathcal{H}_{im}^{\Gamma_0} \left( \ud_m^{\Gamma_0}\phi \cdot \left(\nu^{\Gamma}\circ \phi \right)\right) \nu_j^{\Gamma_0}.
\end{align*}
and since $\ud_m^{\Gamma_0}\phi \cdot \left(\nu^{\Gamma}\circ \phi\right) = 0$, for all $m=1,...,n+1$, we see that the last two terms vanish.  


\subsection{The normal derivative at the boundary}
We conclude this section by computing the pull-back of the normal derivative at the boundary for functions defined over the parametrised bulk-surface
described in (\ref{Geometric:setting3}).

\begin{lemma}[Normal derivative]
 Given any $u: \bar{D}\rightarrow \re$ sufficiently smooth, the pull-back of its normal derivative is given by
 \begin{equation}\label{eq:NormalDeriv}
 \frac{\partial u}{\partial \nu_{\Gamma}}\circ \phi = \frac{\sqrt{g}}{\sqrt{g_{\Gamma_0}}}
 \left( \mathcal{P}_{\Gamma_0} G^{-1} \nu^{\Gamma_0} \cdot \nabla_{\Gamma_0} \hat{u} + 
 \left( G^{-1} \nu^{\Gamma_0}\cdot \nu^{\Gamma_0} \right) \frac{\partial \hat{u} }{\partial \nu_{\Gamma_0}} \right)
 \end{equation}
 where $G = \nabla \phi^{\top} \nabla \phi$ and $g = det(G)$ denoting its determinant.
\end{lemma}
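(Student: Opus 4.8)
The plan is to reduce \eqref{eq:NormalDeriv} to the pull-backs of the Euclidean gradient and of the outward unit normal, both of which are already available. Writing $\frac{\partial u}{\partial\nu_\Gamma}\circ\phi=((\nabla u)\circ\phi)\cdot(\nu^{\Gamma}\circ\phi)$, the bulk chain rule $\nabla\hat{u}=\nabla\phi^{\top}((\nabla u)\circ\phi)$ gives $(\nabla u)\circ\phi=\nabla\phi^{-\top}\nabla\hat{u}$, where $\nabla\phi$ now denotes the full $(n+1)\times(n+1)$ Jacobian of the bulk parametrisation. For the normal, since here $\phi$ is defined on the whole domain $\overline{D_0}$ it already serves as the $C^1$-diffeomorphic extension $\bar\phi$ appearing in Lemma~\ref{theorem:unitNormal}, so $\nu^{\Gamma}\circ\phi=\pm\,\nabla\phi^{-\top}\nu^{\Gamma_0}\,/\,|\nabla\phi^{-\top}\nu^{\Gamma_0}|$, the sign being $+$ once $\nu^{\Gamma}$ and $\nu^{\Gamma_0}$ are both taken to be the outward normals of $D$ and $D_0$ (checked by pairing with $\nabla\phi\,\nu^{\Gamma_0}$, a vector pointing out of $D$).

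Substituting both expressions and using $\nabla\phi^{-1}\nabla\phi^{-\top}=(\nabla\phi^{\top}\nabla\phi)^{-1}=G^{-1}$ together with the symmetry of $G^{-1}$ yields
\[
\frac{\partial u}{\partial\nu_\Gamma}\circ\phi
=\frac{G^{-1}\nabla\hat{u}\cdot\nu^{\Gamma_0}}{|\nabla\phi^{-\top}\nu^{\Gamma_0}|}
=\frac{G^{-1}\nabla\hat{u}\cdot\nu^{\Gamma_0}}{\sqrt{G^{-1}\nu^{\Gamma_0}\cdot\nu^{\Gamma_0}}}.
\]
I would then split $\nabla\hat{u}=\nabla_{\Gamma_0}\hat{u}+\frac{\partial\hat{u}}{\partial\nu_{\Gamma_0}}\,\nu^{\Gamma_0}$ on $\Gamma_0$; since $\nabla_{\Gamma_0}\hat{u}$ is tangential and $\mathcal{P}_{\Gamma_0}$ is symmetric, $G^{-1}\nu^{\Gamma_0}\cdot\nabla_{\Gamma_0}\hat{u}=\mathcal{P}_{\Gamma_0}G^{-1}\nu^{\Gamma_0}\cdot\nabla_{\Gamma_0}\hat{u}$, so the numerator is already exactly the bracket in \eqref{eq:NormalDeriv}.

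It remains to identify the scalar $|\nabla\phi^{-\top}\nu^{\Gamma_0}|=\sqrt{G^{-1}\nu^{\Gamma_0}\cdot\nu^{\Gamma_0}}$ with $\sqrt{g_{\Gamma_0}}/\sqrt{g}$; this is the crux. I plan to prove the equivalent identity $g_{\Gamma_0}=g\,(G^{-1}\nu^{\Gamma_0}\cdot\nu^{\Gamma_0})$ by linear algebra: on $\Gamma_0$ the definition of the tangential gradient gives $\nabla_{\Gamma_0}\phi=\nabla\phi\,\mathcal{P}_{\Gamma_0}$, hence by \eqref{eq:G_eq}
\[
G_{\Gamma_0}=\mathcal{P}_{\Gamma_0}G\,\mathcal{P}_{\Gamma_0}+\nu^{\Gamma_0}\otimes\nu^{\Gamma_0};
\]
evaluating the determinant in an orthonormal basis whose last vector is $\nu^{\Gamma_0}$ shows $\det G_{\Gamma_0}$ equals the $(n+1,n+1)$ principal minor of $G$, i.e. $\mathrm{cof}(G)\,\nu^{\Gamma_0}\cdot\nu^{\Gamma_0}$, and $\mathrm{cof}(G)=g\,G^{-1}$ finishes it. Equivalently, one reads the same identity off the two expressions for the transformed surface measure: $dA_{\Gamma}=\sqrt{g_{\Gamma_0}}\,dA_{\Gamma_0}$ noted earlier, and $dA_{\Gamma}=|\mathrm{cof}(\nabla\phi)\,\nu^{\Gamma_0}|\,dA_{\Gamma_0}=\sqrt{g}\,|\nabla\phi^{-\top}\nu^{\Gamma_0}|\,dA_{\Gamma_0}$. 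Combining the three displays gives \eqref{eq:NormalDeriv}. The only delicate point is this prefactor, and the main thing to keep straight throughout is that $\nabla\phi^{-\top}$ and $G$ are the genuine $(n+1)\times(n+1)$ bulk quantities here, not the tangential pseudo-inverse $B^{-\top}$ and tensor $G_{\Gamma_0}$ of the surface-only lemmas.
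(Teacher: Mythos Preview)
Your proof is correct and agrees with the paper's in its first half: both combine the bulk chain rule $(\nabla u)\circ\phi=\nabla\phi^{-\top}\nabla\hat{u}$ with Lemma~\ref{theorem:unitNormal} (using $\phi$ itself as the extension $\bar\phi$) to reach $\frac{\partial u}{\partial\nu_\Gamma}\circ\phi=(\text{scalar prefactor})\,\nabla\hat{u}\cdot G^{-1}\nu^{\Gamma_0}$, then split $\nabla\hat{u}$ into tangential and normal parts to obtain the bracket.

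The genuine difference is in the identification of the prefactor. The paper rewrites the denominator as the normal component $\frac{\partial\phi}{\partial\nu_{\Gamma_0}}\cdot(\nu^{\Gamma}\circ\phi)$ and then computes $\det\nabla\phi$ via exterior algebra: expanding the wedge product of $\nabla\phi$ applied to an orthonormal frame $\{\tau_1,\dots,\tau_n,\nu^{\Gamma_0}\}$ and stripping off the tangential part of $\frac{\partial\phi}{\partial\nu_{\Gamma_0}}$ shows $\sqrt{g}=\bigl(\frac{\partial\phi}{\partial\nu_{\Gamma_0}}\cdot(\nu^{\Gamma}\circ\phi)\bigr)\sqrt{g_{\Gamma_0}}$. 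Your route is more elementary: you keep the denominator as $\sqrt{G^{-1}\nu^{\Gamma_0}\cdot\nu^{\Gamma_0}}$ and prove $g_{\Gamma_0}=g\,(G^{-1}\nu^{\Gamma_0}\cdot\nu^{\Gamma_0})$ directly from the block form $G_{\Gamma_0}=\mathcal{P}_{\Gamma_0}G\,\mathcal{P}_{\Gamma_0}+\nu^{\Gamma_0}\otimes\nu^{\Gamma_0}$ together with the cofactor identity $\mathrm{cof}(G)=g\,G^{-1}$. Both arguments are evaluating the same determinant; the paper's version is more geometric and makes transparent that the prefactor is literally the ratio of bulk to surface Jacobians, while yours avoids the wedge-product machinery entirely and reduces the claim to a Cramer's-rule identity for the symmetric positive-definite matrix $G$.
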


\begin{proof}
 Differentiating $u = \hat{u} \circ \phi^{-1}$ and substituting in the expression (\ref{eq:UnitNormal}) for the
 pull-back of the unit normal $\nu^{\Gamma}$, where the orientation has been chosen to be in the outer 
 direction to the domain $D$ gives
 \[
  \frac{\partial u}{\partial \nu_{\Gamma}} = \nabla u \cdot \nu^{\Gamma} 
  = \nabla \phi^{-\top} (\nabla \hat{u} \circ \phi^{-1}) \cdot \frac{\nabla \phi^{-\top} (\nu^{\Gamma_0} \circ \phi^{-1})}{|\nabla \phi^{-\top} (\nu^{\Gamma_0} \circ \phi^{-1})|}.
 \]
We next observe with the decomposition 
$
 \nabla \phi = \nabla_{\Gamma_0} \phi + \frac{\partial \phi}{\partial \nu_{\Gamma_0}} \otimes \nu^{\Gamma_0}
$
and the orthogonality result $\nabla_{\Gamma_0}\phi ^{\top} (\nu^{\Gamma}\circ \phi) =0$ that
\[
\nabla \phi^{\top} (\nu^{\Gamma}\circ \phi) = \left( \frac{\partial \phi}{\partial \nu_{\Gamma_0}} \cdot \left( \nu^{\Gamma}\circ \phi \right) \right) \nu^{\Gamma_0}.
\]
Since $\phi$ maps the boundary $\Gamma_0$ onto $\Gamma$, it follows that
$\frac{\partial \phi}{\partial \nu_{\Gamma_0}} \cdot ( \nu^{\Gamma}\circ \phi) > 0 $
and thus 
\[
 \frac{\partial u}{\partial \nu_{\Gamma}}\circ \phi = 
 \left( \frac{\partial \phi}{\partial \nu_{\Gamma_0}} \cdot \nu^{\Gamma}\circ \phi \right) \nabla \hat{u} \cdot G^{-1} \nu^{\Gamma_0}.
\]
We now continue by showing that the normal component of $\frac{\partial \phi}{\partial \nu_{\Gamma_0}}$ can be expressed as the ratio between 
the bulk $\sqrt{g}$ and the surface area element $\sqrt{g_{\Gamma_0}}$. This will be achieved in the context of exterior algebras.

Let $\tau_1,...,\tau_n$ represent an orthonormal basis of the tangent space $T\Gamma_0$ and thus $\{\tau_1,...,\tau_n, \nu^{\Gamma_0}\}$ forms a basis of $\re^{n+1}$.
The determinant of linear map corresponding to $\nabla\phi$ evaluated on the boundary $\Gamma_0$ can be expressed in the notation of exterior algebras as follows
 \begin{align*}
  \text{det}(\nabla \phi) \tau_1 \wedge... \wedge \tau_n \wedge \nu^{\Gamma_0} 
  &= \nabla \phi \tau_1 \wedge ... \wedge \nabla \phi \tau_n \wedge \nabla \phi \nu^{\Gamma_0}
  = \nabla_{\Gamma_0} \phi \tau_1 \wedge ... \wedge \nabla_{\Gamma_0} \phi \tau_n \wedge \frac{\partial \phi}{\partial \nu_{\Gamma_0}}.
 \intertext{Since  $\nabla_{\Gamma_0}\phi\tau_1,..., \nabla_{\Gamma_0}\phi \tau_n$ form a basis of the tangent space $T\Gamma$ and
  the exterior product of any set of linearly dependent vectors is zero, we are therefore able to remove the tangent 
 component of the normal derivative yielding} 
  &= \left(\frac{\partial \phi}{\partial \nu_{\Gamma_0}} \cdot (\nu^{\Gamma}\circ \phi)\right) \nabla_{\Gamma_0} \phi \tau_1 \wedge ... \wedge \nabla_{\Gamma_0} \phi \tau_n 
  \wedge  \nu^{\Gamma}\circ\phi.
  \intertext{Observing that each term in the above exterior product is the image of the basis $\{\tau_1,...,\tau_n, \nu^{\Gamma_0}\}$ under the linear 
  mapping $\nabla_{\Gamma_0}\phi + \left(\nu^{\Gamma}\circ \phi\right) \otimes \nu^{\Gamma_0}$ gives}
  &=  \left(\frac{\partial \phi}{\partial \nu_{\Gamma_0}} \cdot \left(\nu^{\Gamma}\circ \phi\right)\right) 
  \text{det}\left( \nabla_{\Gamma_0} \phi + \left(\nu^{\Gamma}\circ \phi \right) \otimes \nu^{\Gamma_0} \right) \tau_1 \wedge ... \wedge \tau_n \wedge  \nu^{\Gamma_0}.
 \end{align*}
 Hence it follows 
 \[
  \text{det} \nabla \phi = \left( \frac{\partial \phi}{\partial \nu_{\Gamma_0}} \cdot \left( \nu^{\Gamma_0}\circ \phi\right) \right) \text{det}
  \left( \nabla_{\Gamma_0}\phi + \left( \nu^{\Gamma}\circ \phi \right) \otimes \nu^{\Gamma_0}\right).
 \]
We thus obtain the stated result with the following observations
\begin{align*}
 \left(\text{det} \nabla \phi\right)^2 &= \text{det} \nabla \phi^{\top} \nabla \phi = g\\
 \left( \text{det} \left(\nabla_{\Gamma_0} \phi +  \left(\nu^{\Gamma}\circ \phi \right) \otimes \nu^{\Gamma_0}\right)\right)^2
&= \text{det} \left(\left(\nabla_{\Gamma_0}\phi + \left( \nu^{\Gamma}\circ \phi\right)\otimes \nu^{\Gamma_0}\right)^{\top} 
\left(\nabla_{\Gamma_0}\phi + \left( \nu^{\Gamma}\circ \phi\right)\otimes \nu^{\Gamma_0}\right) \right)\\
&= \text{det}\left( \nabla_{\Gamma_0} \phi^{\top}\nabla_{\Gamma_0}\phi + \nu^{\Gamma_0}\otimes \nu^{\Gamma_0} \right)= g_{\Gamma_0}.
\end{align*}

\end{proof}


\section{First applications of the domain mapping method to complex random geometries involving random surfaces}
We will now consider two model elliptic problems posed on complex random domains involving random surfaces. 
In particular, the first problem will be posed on a sufficiently smooth random surface and the second on a random bulk-surface.
In both cases, the complete random domain mapping will be assumed to be known. Furthermore, we will assume 
that the computational domain was chosen to coincide with the expected domain, and thus will assume in both cases that $\mathbb{E}[\phi] = 0$. 
We will now employ the domain mapping method,
and reformulate both equations onto their corresponding expected domain 
and prove well-posedness as well as a regularity result.

\subsection{An elliptic equation on a random surface}
Let $\Gamma(\omega) $ represent a random, compact $C^2-$hypersurface in $\re^{n+1}$ prescribed by
\begin{equation}
 \Gamma(\omega) = \{ \phi(\omega,x) \, | \, x \in \Gamma_0  \}
\end{equation}
for a given random field $\phi \in L^{\infty}(\Omega; C^2(\Gamma_0; \re^{n+1}))$ defined over a fixed, compact $C^2-$hypersurface $\Gamma_0\subset \re^{n+1}$.
We will assume that the random domain mapping
$
 \phi(\omega, \cdot): \Gamma_0 \rightarrow \Gamma (\omega)
$
is a $C^2-$ diffeomorphism for almost every $\omega$ and furthermore satisfies the uniform bounds
\begin{equation}\label{assump:1}
\| \phi(\omega, \cdot)\|_{C^2(\Gamma_0)}, \|\phi^{-1}(\omega,\cdot) \|_{C^2(\Gamma(\omega))} < C 
\end{equation}
for some constant $C>0$ independent of $\omega$. 
 We consider the following model elliptic equation on the random surface 
\begin{equation}\label{eq:PDEGraph1}
 -\Delta_{\Gamma(\omega)}u(\omega) + u(\omega) = f(\omega) \quad \text{on } \Gamma(\omega)
\end{equation}
for a given random field
$
f(\omega,\cdot): \Gamma(\omega) \rightarrow \re.
$ 
Our goal is to analyse the mean solution defined by
$$
 QoI[u] := \mathbb{E}[u\circ \phi] \quad \text {on } \Gamma_0.
$$
Reformulating (\ref{eq:PDEGraph1}) onto the expected domain with the calculation of the Laplace-Beltrami operator provided in Lemma \ref{theorem:surfGrad} yields
\begin{equation}\label{eq:reformSurf}
 -\frac{1}{\sqrt{g_{\Gamma_0}(\omega)}} \nabla_{\Gamma_0} \cdot \left( \sqrt{g_{\Gamma_0}(\omega)} G_{\Gamma_0}^{-1}(\omega) \nabla_{\Gamma_0}\hat{u}(\omega)\right)
 + \hat{u}(\omega) = \hat{f}(\omega) \quad \text{on } \Gamma_0,
\end{equation}
where the random coefficient is given by
\begin{align}\label{eq:CoefGSurf1}
     G_{\Gamma_0}(\omega) &= \nabla_{\Gamma_0}\phi^{\top}(\omega)\nabla_{\Gamma_0} \phi(\omega) + \nu^{\Gamma_0}\otimes \nu^{\Gamma_0}
\end{align}
with $g_{\Gamma_0}(\omega)= \det G_{\Gamma_0}(\omega)$. Multiplying through by surface area element $\sqrt{g_{\Gamma_0}(\omega)}$ and integrating by parts, we 
arrive at the following mean-weak formulation on the fixed deterministic domain $\Gamma_0$. 
\begin{problem}[Mean-weak formulation]\label{p3}
Given $\hat{f} \in L^2(\Omega; L^2(\Gamma_0))$, find
$\hat{u} \in  L^2(\Omega;H^1(\Gamma_0))$  such that
\begin{equation}\label{prob:MeanWeakSurf}
\int_\Omega \int_{\Gamma_0} \mathcal{D}_{\Gamma_0}(\omega)\nabla_{\Gamma_0} \hat{u}(\omega) \cdot \nabla_{\Gamma_0} \hat{\varphi}(\omega)
+ \hat{u}(\omega) \hat{\varphi}(\omega)\sqrt{g_{\Gamma_0}(\omega)} = \int_\Omega \int_{\Gamma_0} \hat{f}(\omega) \hat{\varphi}(\omega)\sqrt{g_{\Gamma_0}(\omega)}
\end{equation}
for every $\hat{\varphi} \in L^2(\Omega; H^1(\Gamma_0))$. Here, we have set 
$
\mathcal{D}_{\Gamma_0}(\omega) = \sqrt{g_{\Gamma_0}(\omega)} G_{\Gamma_0}^{-1}(\omega). 
$
\end{problem}
We denote the associated bilinear form $a(\cdot,\cdot): L^2(\Omega;H^1(\Gamma_0))\times L^2(\Omega;H^1(\Gamma_0)) \rightarrow \re $ and linear functional 
$l(\cdot): L^2(\Omega; L^2(\Gamma_0))\rightarrow \re$ by 
\begin{align}
 a(\hat{u},\hat{\varphi}) &= \int_\Omega \int_{\Gamma_0} \mathcal{D}_{\Gamma_0}(\omega)\nabla_{\Gamma_0} \hat{u}(\omega) \cdot \nabla_{\Gamma_0} \hat{\varphi}(\omega) 
 + \hat{u}(\omega) \hat{\varphi}(\omega)\sqrt{g_{\Gamma_0}(\omega)}\\
 l(\hat{\varphi}) &= \int_\Omega \int_{\Gamma_0} \hat{f}(\omega) \hat{\varphi}(\omega)\sqrt{g_{\Gamma_0}(\omega)}.
\end{align}
Thus the mean-weak formulation can be written more succiently as 
\begin{equation}
 a(\hat{u},\hat{\varphi}) = l(\hat{\varphi}) \quad \text{for all } \hat{\varphi} \in L^2(\Omega; H^1(\Gamma_0)).  
\end{equation}

\begin{proposition}\label{Prop:BoundsSurfCoef}
 Under the uniformity assumptions (\ref{assump:1}) on the random domain mapping, there exists constants $C_{D_{\Gamma_0}}, C_{g_{\Gamma_0}} >0$ such that the singular 
 values $\sigma_i$ of $\mathcal{D}_{\Gamma_0}$ and the surface area element $\sqrt{g_{\Gamma_0}}$ are bounded above and below by 
 \begin{align}\label{eq:uniformBoundSurfCoef1}
  0 &< C_{D_{\Gamma_0}}^{-1} \leq \sigma_i \left( \mathcal{D}_{\Gamma_0}(\omega,x) \right) \leq C_{D_{\Gamma_0}} < + \infty\\
  \label{eq:uniformBoundSurfCoef2}
   0 &< C_{g_{\Gamma_0}}^{-1} \leq \sqrt{g_{\Gamma_0}(\omega,x)} \leq C_{g_{\Gamma_0}} < + \infty  
 \end{align}
  for all  $x \in \Gamma_0 \text{ and a.e. } \omega$.
\end{proposition}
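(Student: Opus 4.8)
The plan is to deduce both bound pairs from the relation $G_{\Gamma_0}=B^\top B$ established in~(\ref{eq:BandG}), together with $\sqrt{g_{\Gamma_0}}=\det B=b$ from~(\ref{eq:bandg}), reducing everything to control of the singular values of the matrix $B = \nabla_{\Gamma_0}\phi + \nu^{\Gamma}\circ\phi\otimes\nu^{\Gamma_0}$ in~(\ref{eq:Bdef}). Concretely, I would argue as follows.

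\medskip

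\textbf{Step 1: Uniform bounds on $B$ and $B^{-1}$.} The entries of $\nabla_{\Gamma_0}\phi(\omega,\cdot)$ are bounded in modulus by $\|\phi(\omega,\cdot)\|_{C^1(\Gamma_0)}\le\|\phi(\omega,\cdot)\|_{C^2(\Gamma_0)}<C$ by~(\ref{assump:1}), and $|\nu^{\Gamma}\circ\phi|=|\nu^{\Gamma_0}|=1$, so $\|B(\omega,x)\|\le \tilde C$ for a constant $\tilde C$ depending only on $C$ and $n$, uniformly in $x\in\Gamma_0$ and a.e.\ $\omega$. For the lower bound, I use the decomposition from the proof of Lemma~\ref{theorem:surfGrad}: $B$ maps $T\Gamma_0$ bijectively onto $T\Gamma$ via $\nabla_{\Gamma_0}\phi$ and maps $\nu^{\Gamma_0}\mapsto\nu^{\Gamma}\circ\phi$. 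Since $\phi^{-1}(\omega,\cdot)$ is $C^2$ with $\|\phi^{-1}(\omega,\cdot)\|_{C^2}<C$, the tangential derivative $\nabla_{\Gamma}\phi^{-1}$ is the inverse of $\nabla_{\Gamma_0}\phi$ on the tangent spaces and is likewise uniformly bounded; hence $\|B^{-1}(\omega,x)\|\le \tilde C'$ uniformly. Equivalently, one can observe that the least singular value of $\nabla_{\Gamma_0}\phi$ restricted to $T\Gamma_0$ is bounded below by $1/\|\nabla_\Gamma\phi^{-1}\|_{\infty}$, and $B$ merely adjoins a unit-length image on the normal line, so the least singular value of $B$ is bounded below by a positive constant depending only on $C$.

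\medskip

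\textbf{Step 2: From $B$ to $\mathcal{D}_{\Gamma_0}$ and $\sqrt{g_{\Gamma_0}}$.} Since $G_{\Gamma_0}=B^\top B$, the eigenvalues of $G_{\Gamma_0}$ are the squared singular values of $B$, hence lie in $[\tilde C'^{-2},\tilde C^2]$; consequently $g_{\Gamma_0}=\det G_{\Gamma_0}=\prod(\text{eigenvalues})$ satisfies $\tilde C'^{-2(n+1)}\le g_{\Gamma_0}\le \tilde C^{2(n+1)}$, which gives~(\ref{eq:uniformBoundSurfCoef2}) with $C_{g_{\Gamma_0}}=\max(\tilde C^{n+1},\tilde C'^{n+1})$. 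For $\mathcal{D}_{\Gamma_0}=\sqrt{g_{\Gamma_0}}\,G_{\Gamma_0}^{-1}$: $G_{\Gamma_0}$ is symmetric positive definite, so $\mathcal{D}_{\Gamma_0}$ is symmetric positive definite and its singular values equal its eigenvalues, namely $\sqrt{g_{\Gamma_0}}$ times the eigenvalues of $G_{\Gamma_0}^{-1}$. The eigenvalues of $G_{\Gamma_0}^{-1}$ lie in $[\tilde C^{-2},\tilde C'^{2}]$, and $\sqrt{g_{\Gamma_0}}\in[C_{g_{\Gamma_0}}^{-1},C_{g_{\Gamma_0}}]$, so the product lies in a fixed interval $[C_{D_{\Gamma_0}}^{-1},C_{D_{\Gamma_0}}]$ with all constants positive and depending only on $C$ and $n$. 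This yields~(\ref{eq:uniformBoundSurfCoef1}).

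\medskip

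\textbf{Main obstacle.} The only genuinely nontrivial point is the \emph{uniform} lower bound on the least singular value of $\nabla_{\Gamma_0}\phi$ on the tangent space (equivalently, the uniform upper bound on $\|B^{-1}\|$): it is not enough that $\phi(\omega,\cdot)$ is a diffeomorphism for each $\omega$, one needs the modulus of invertibility not to degenerate as $\omega$ varies, and this is exactly what the second half of the hypothesis~(\ref{assump:1}), the uniform $C^2$ bound on $\phi^{-1}(\omega,\cdot)$, supplies via the identity $\nabla_{\Gamma}\phi^{-1}\circ\phi\cdot\nabla_{\Gamma_0}\phi = \mathcal{P}_{\Gamma_0}$ on $T\Gamma_0$. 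Once this is in hand, everything else is elementary linear algebra (singular values of $B^\top B$, determinant as product of eigenvalues, eigenvalues of $\mathcal{D}_{\Gamma_0}$), and I would present it compactly rather than in full index detail.
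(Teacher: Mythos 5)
Your proposal is correct and follows essentially the same route as the paper: the paper also writes $G_{\Gamma_0}=B^{\top}B$ with $B=\nabla_{\Gamma_0}\phi+\nu^{\Gamma}\circ\phi\otimes\nu^{\Gamma_0}$ and secures the two-sided bounds by exhibiting the explicit inverse $B^{-1}=\nabla_{\Gamma}\phi^{-1}\circ\phi+\nu^{\Gamma_0}\otimes\nu^{\Gamma}\circ\phi$, so that the upper bound comes from the uniform $C^2$ bound on $\phi$ and the lower bound from the uniform $C^2$ bound on $\phi^{-1}$, which is exactly your Step 1. The remaining passage to $\sqrt{g_{\Gamma_0}}$ and to the singular values of $\mathcal{D}_{\Gamma_0}=\sqrt{g_{\Gamma_0}}\,G_{\Gamma_0}^{-1}$ is the same elementary linear algebra, which you in fact spell out in more detail than the paper does.
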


\begin{proof}
We can rewrite $G_{\Gamma_0}$ using the orthogonality $\nabla_{\Gamma_0}\phi^{\top}(\nu^{\Gamma}\circ \phi) =0,$
as follows
 \begin{align*}
  G_{\Gamma_0} = \nabla_{\Gamma_0}\phi^{\top} \nabla_{\Gamma_0} \phi + \nu^{\Gamma_0}\otimes \nu^{\Gamma_0}
  = \left(\nabla_{\Gamma_0} \phi + \nu^{\Gamma}\circ \phi \otimes \nu^{\Gamma_0} \right)^{\top}
   \left(\nabla_{\Gamma_0} \phi + \nu^{\Gamma}\circ \phi \otimes \nu^{\Gamma_0} \right).
 \end{align*}
Examining each term separately, we see that the inverse is given by
\[
 \left(\nabla_{\Gamma_0} \phi + \nu^{\Gamma}\circ \phi \otimes \nu^{\Gamma_0} \right)^{-1} 
 = \nabla_{\Gamma}\phi^{-1}\circ \phi + \nu^{\Gamma_0}\otimes \nu^{\Gamma} \circ \phi.
\]
Hence it follows 
\begin{align*}
 G_{\Gamma_0}^{-1} &=  \left(\nabla_{\Gamma}\phi^{-1}\circ \phi + \nu^{\Gamma_0}\otimes \nu^{\Gamma} \circ \phi \right)
		       \left(\nabla_{\Gamma}\phi^{-1} \circ \phi + \nu^{\Gamma_0}\otimes \nu^{\Gamma} \circ \phi \right)\\
		       &= \left( \nabla_{\Gamma} \phi^{-1} \circ \phi \right) \left( \nabla_{\Gamma} \phi^{-\top} \circ \phi \right) + \nu^{\Gamma_0}\otimes \nu^{\Gamma_0}.
\end{align*}
Therefore with (\ref{assump:1}), we have uniform bounds above and below on the singular values of $G_{\Gamma_0}(\omega)$ and hence
obtain the estimates (\ref{eq:uniformBoundSurfCoef1}) and (\ref{eq:uniformBoundSurfCoef2}). 
\end{proof}

A direct consequence of the above uniform bounds on the random coefficients is the existence and uniqueness of a solution to (\ref{prob:MeanWeakSurf})
gauranteed by the Lax-Milgram theorem.
\begin{theorem}
 Given any $\hat{f}\in L^2(\Omega; L^2(\Gamma_0))$, there exists a unique solution $\hat{u}$ to the mean-weak formulation (\ref{prob:MeanWeakSurf}) that satisfies the energy estimate
 \begin{equation}
 \label{eq:SurfStabEst}
 \|\hat{u}\|_{L^2(\Omega; H^1(\Gamma_0))} \leq c \|\hat{f}\|_{L^2(\Omega; L^2(\Gamma_0))}.
 \end{equation}
\end{theorem}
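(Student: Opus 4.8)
The plan is to apply the Lax--Milgram theorem to the bilinear form $a(\cdot,\cdot)$ and linear functional $l(\cdot)$ on the Hilbert space $V = L^2(\Omega; H^1(\Gamma_0))$. The key inputs are the uniform bounds (\ref{eq:uniformBoundSurfCoef1}) and (\ref{eq:uniformBoundSurfCoef2}) from Proposition \ref{Prop:BoundsSurfCoef}, which give pointwise (in $x$ and a.e.\ $\omega$) two-sided control on the singular values of the diffusion tensor $\mathcal{D}_{\Gamma_0}$ and on the area element $\sqrt{g_{\Gamma_0}}$. First I would verify that $a$ is well-defined and bounded: using $|\mathcal{D}_{\Gamma_0}(\omega,x)\xi\cdot\eta| \leq C_{D_{\Gamma_0}}|\xi||\eta|$ and $\sqrt{g_{\Gamma_0}(\omega,x)} \leq C_{g_{\Gamma_0}}$, a Cauchy--Schwarz estimate in the $(x,\omega)$ integral yields $|a(\hat{u},\hat{\varphi})| \leq \max\{C_{D_{\Gamma_0}}, C_{g_{\Gamma_0}}\}\, \|\hat{u}\|_V \|\hat{\varphi}\|_V$. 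Boundedness of $l$ follows similarly: $|l(\hat{\varphi})| \leq C_{g_{\Gamma_0}} \|\hat{f}\|_{L^2(\Omega;L^2(\Gamma_0))} \|\hat{\varphi}\|_V$.

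Next I would establish coercivity. Testing with $\hat{\varphi} = \hat{u}$ and using the lower bound $\mathcal{D}_{\Gamma_0}(\omega,x)\xi\cdot\xi \geq C_{D_{\Gamma_0}}^{-1}|\xi|^2$ together with $\sqrt{g_{\Gamma_0}(\omega,x)} \geq C_{g_{\Gamma_0}}^{-1}$, one obtains
\begin{equation*}
a(\hat{u},\hat{u}) \geq \int_\Omega \int_{\Gamma_0} C_{D_{\Gamma_0}}^{-1} |\nabla_{\Gamma_0}\hat{u}(\omega)|^2 + C_{g_{\Gamma_0}}^{-1} |\hat{u}(\omega)|^2 \geq c_0 \|\hat{u}\|_V^2
\end{equation*}
with $c_0 = \min\{C_{D_{\Gamma_0}}^{-1}, C_{g_{\Gamma_0}}^{-1}\} > 0$. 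Here it is important that the zeroth-order term $+\hat{u}(\omega)$ is present in the PDE (\ref{eq:PDEGraph1}), so no Poincar\'e-type inequality on $\Gamma_0$ is needed; the full $H^1$-norm is controlled directly. Since $V$ is a separable Hilbert space (as noted in the random-field preliminaries, being isomorphic to $L^2(\Omega)\otimes H^1(\Gamma_0)$), the Lax--Milgram theorem then gives a unique $\hat{u}\in V$ with $a(\hat{u},\hat{\varphi}) = l(\hat{\varphi})$ for all $\hat{\varphi}\in V$.

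Finally, the energy estimate (\ref{eq:SurfStabEst}) is obtained by the standard argument: setting $\hat{\varphi} = \hat{u}$ in the weak formulation and combining coercivity with the bound on $l$ gives $c_0 \|\hat{u}\|_V^2 \leq a(\hat{u},\hat{u}) = l(\hat{u}) \leq C_{g_{\Gamma_0}} \|\hat{f}\|_{L^2(\Omega;L^2(\Gamma_0))} \|\hat{u}\|_V$, and dividing through by $\|\hat{u}\|_V$ yields the claimed bound with $c = C_{g_{\Gamma_0}}/c_0$.

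I do not anticipate a genuine obstacle here; the proof is a routine verification of the Lax--Milgram hypotheses. The only point requiring a little care is that the bilinear form is an integral over the product space $\Omega \times \Gamma_0$ rather than over a spatial domain alone, so one should be slightly careful to invoke the pointwise-in-$\omega$ bounds of Proposition \ref{Prop:BoundsSurfCoef} under the $\omega$-integral (measurability of all integrands follows from the assumed measurability of $\phi$ and continuity of the algebraic operations producing $\mathcal{D}_{\Gamma_0}$ and $\sqrt{g_{\Gamma_0}}$). Everything else is the classical Lax--Milgram template.
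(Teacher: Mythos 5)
Your proposal is correct and follows exactly the paper's route: the paper also deduces existence and uniqueness from the Lax--Milgram theorem using the uniform bounds of Proposition \ref{Prop:BoundsSurfCoef}, and obtains the estimate (\ref{eq:SurfStabEst}) from coercivity of $a(\cdot,\cdot)$; you have merely written out the routine verification that the paper leaves implicit.
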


\begin{proof}
The stability estimate (\ref{eq:SurfStabEst}) follows from the coercivity of $a(\cdot,\cdot)$. 
\end{proof}

By considering the original surface equation (\ref{eq:PDEGraph1}) on $\Gamma(\omega)\in C^2$, we would expect from standard 
elliptic surface regularity results that for given $f(\omega) \in L^2(\Gamma(\omega))$, the pathwise solution belongs to $u(\omega) \in H^2(\Gamma(\omega))$
and therefore $\hat{u}(\omega) \in H^2(\Gamma_0)$ for a.e. $\omega$. 
However since the $H^2$ a-priori estimate on $u(\omega)$ will naturally depend on the geometry of the realisation $\Gamma(\omega)$, it is not immediately clear whether 
the solution to
the mean-weak formulation belongs to $\hat{u} \in L^2(\Omega;H^2(\Gamma_0))$. 
We will therefore continue by explicitly treating all arising constants and their dependency on the geometry of the random domain.

\begin{theorem}[Regularity]\label{theorem:RegularitySurfMean}
 Given any $\hat{f} \in L^2(\Omega; L^2(\Gamma_0))$, the solution to (\ref{prob:MeanWeakSurf}) belongs to $\hat{u} \in L^2(\Omega; H^2(\Gamma_0))$
 and furthermore satisfies the following estimate
 \begin{equation}\label{eq:aprioriSurf}
 \| \hat{u} \|_{L^2(\Omega; H^2(\Gamma_0))} \leq C \| \hat{f} \|_{L^2(\Omega; L^2(\Gamma_0))}.
 \end{equation}
 
 
 \begin{proof}
  Let us consider the push-forward $u = \hat{u} \circ \phi^{-1}$ of realisations of the weak solution onto $\Gamma(\omega)$ for almost every $\omega$,
  which as a result of the tensor structure $L^2(\Omega; H^1(\Gamma_0)) \cong L^2(\Omega) \otimes H^1(\Gamma_0)$ is a pathwise weak solution of 
  \begin{equation}\label{eq:PDEpushForward}
   - \Delta_{\Gamma(\omega)} u(\omega) + u(\omega) = f(\omega) \quad \text{on } \Gamma(\omega)
  \end{equation}
with $f = \hat{f} \circ \phi^{-1}.$ Since for almost every $\omega \in \Omega$, 
$\Gamma(\omega)$ is $C^2$ and $f(\omega) \in L^2(\Gamma(\omega))$, it follows that 
$u(\omega) \in H^2(\Gamma(\omega))$ and therefore $\hat{u}(\omega) \in H^2(\Gamma_0)$. 
For the a-priori estimate (\ref{eq:aprioriSurf}), it was shown in \cite{dziuk2013finite} through a series of integration by parts and interchanging of tangential derivatives
that the $H^2$ semi-norm satisfies
\[
 | u(\omega) |_{H^2(\Gamma(\omega))} \leq \| \Delta_{\Gamma(\omega)} u(\omega) \|_{L^2(\Gamma(\omega))} + c(\omega) |u(\omega)|_{H^1(\Gamma(\omega))}
\]
with 
$$
c(\omega) = \sqrt{ \| H^{\Gamma(\omega)} \mathcal{H}^{\Gamma(\omega)}  -2 \left( \mathcal{H}^{\Gamma(\omega)} \right)^2  \|_{L^{\infty}(\Gamma(\omega))}      }.
$$
Here $H^{\Gamma(\omega)} = trace\left( \mathcal{H}^{\Gamma(\omega)} \right)$ is the mean-curvature. 
Hence with the uniform bounds (\ref{assump:1}) on the random domain mapping and the previously calculated expression (\ref{eq:Weingarten}) for the Weingarten map, we obtain 
an upper bound on the constant $c(\omega)$ independent of $\omega$. 
Thus, with the PDE (\ref{eq:PDEpushForward}) pointwise  we have the bound
\[
 \|u(\omega) \|_{H^2(\Gamma(\omega))} \leq c \left(\|f(\omega) \|_{L^2(\Gamma(\omega))} +  \|u(\omega) \|_{H^1(\Gamma(\omega))}\right).
\]
We can now pull-back onto the expected domain, applying the norm equivalence of the pull-back transformation
\[
 C^{-1}\|\hat{u}(\omega) \|_{H^k(\Gamma_0)} \leq \|u(\omega)\|_{H^k(\Gamma(\omega))} \leq C\| \hat{u}(\omega) \|_{H^k(\Gamma_0)} \quad \text{for } k=0,1,2 \text{ and a.e. } \omega
\]
where the constants are independent of $\omega$ due to bounds (\ref{assump:1}), and the stability estimate (\ref{eq:SurfStabEst}) to obtain
\[
 \|\hat{u}(\omega)\|_{H^2(\Gamma_0)} \leq C \|\hat{f} (\omega) \|_{L^2(\Gamma_0)}.
\]
and thus the stated result.
 \end{proof}

\end{theorem}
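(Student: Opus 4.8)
The plan is to reduce the stochastic $H^2$ estimate to a pathwise one with constants that are \emph{uniform} in $\omega$, and then integrate over $\Omega$. First, I would exploit the tensor product isomorphism $L^2(\Omega; H^1(\Gamma_0)) \cong L^2(\Omega) \otimes H^1(\Gamma_0)$ to legitimately view the solution $\hat u$ of the mean-weak formulation as, for almost every $\omega$, a pathwise weak solution of the reformulated problem \eqref{eq:reformSurf}; pushing this forward under $\phi(\omega,\cdot)$ and using Lemma \ref{theorem:LaplaceBelt} (identifying the reformulated operator as the pulled-back Laplace--Beltrami operator) shows that $u(\omega) = \hat u(\omega) \circ \phi^{-1}(\omega,\cdot)$ solves $-\Delta_{\Gamma(\omega)} u(\omega) + u(\omega) = f(\omega)$ on $\Gamma(\omega)$ with $f(\omega) = \hat f(\omega) \circ \phi^{-1}(\omega,\cdot)$. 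Since $\Gamma(\omega) \in C^2$ for a.e.\ $\omega$ and $f(\omega) \in L^2(\Gamma(\omega))$, standard elliptic surface regularity gives $u(\omega) \in H^2(\Gamma(\omega))$, hence $\hat u(\omega) \in H^2(\Gamma_0)$ for a.e.\ $\omega$.

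The heart of the argument is to make the a-priori constant $\omega$-independent. I would invoke the pathwise $H^2$ estimate from \cite{dziuk2013finite}, namely
\[
| u(\omega) |_{H^2(\Gamma(\omega))} \leq \| \Delta_{\Gamma(\omega)} u(\omega) \|_{L^2(\Gamma(\omega))} + c(\omega)\, |u(\omega)|_{H^1(\Gamma(\omega))},
\]
where $c(\omega)$ depends only on $L^\infty$-norms of the extended Weingarten map $\mathcal{H}^{\Gamma(\omega)}$ and the mean curvature $H^{\Gamma(\omega)}$ of the realisation. The key observation is that the pulled-back Weingarten map is given explicitly by \eqref{eq:Weingarten} in terms of $\nabla_{\Gamma_0}\phi(\omega)$, $\nu^\Gamma\circ\phi(\omega)$ and the matrix $L(\omega)$ built from second tangential derivatives of $\phi(\omega)$; all of these are controlled by $\|\phi(\omega,\cdot)\|_{C^2(\Gamma_0)}$ and $\|\phi^{-1}(\omega,\cdot)\|_{C^2(\Gamma(\omega))}$, which are uniformly bounded by \eqref{assump:1}. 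Invertibility of the extension matrix $B(\omega) = \nabla_{\Gamma_0}\phi(\omega) + \nu^\Gamma\circ\phi(\omega)\otimes\nu^{\Gamma_0}$ with uniformly bounded inverse follows from the same argument as in Proposition \ref{Prop:BoundsSurfCoef}. Consequently $\sup_\omega c(\omega) =: \bar c < \infty$, and combining with the PDE pointwise yields
\[
\|u(\omega) \|_{H^2(\Gamma(\omega))} \leq C \big( \|f(\omega) \|_{L^2(\Gamma(\omega))} + \|u(\omega) \|_{H^1(\Gamma(\omega))} \big)
\]
with $C$ independent of $\omega$.

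To finish, I would transfer this back to the reference surface using norm equivalence of the pull-back, $C^{-1}\|\hat u(\omega)\|_{H^k(\Gamma_0)} \leq \|u(\omega)\|_{H^k(\Gamma(\omega))} \leq C\|\hat u(\omega)\|_{H^k(\Gamma_0)}$ for $k=0,1,2$, where again the constants are $\omega$-independent thanks to \eqref{assump:1} (the Jacobian $\sqrt{g_{\Gamma_0}(\omega)}$ and the coefficient $G_{\Gamma_0}^{-1}(\omega)$ are uniformly bounded above and below by Proposition \ref{Prop:BoundsSurfCoef}, and the same controls the equivalence of the $H^2$ norms, which also involves first derivatives of these quantities — controlled by $C^2$ bounds on $\phi$). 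This gives $\|\hat u(\omega)\|_{H^2(\Gamma_0)} \leq C(\|\hat f(\omega)\|_{L^2(\Gamma_0)} + \|\hat u(\omega)\|_{H^1(\Gamma_0)})$ pointwise; squaring, integrating over $\Omega$, and applying the energy estimate \eqref{eq:SurfStabEst} yields \eqref{eq:aprioriSurf}. The main obstacle is the bookkeeping in the second paragraph: verifying carefully that every geometric quantity entering $c(\omega)$ and the norm-equivalence constants is indeed expressible through the pulled-back formulas of Section 3 and hence bounded purely in terms of $\|\phi(\omega,\cdot)\|_{C^2(\Gamma_0)}$ and $\|\phi^{-1}(\omega,\cdot)\|_{C^2(\Gamma(\omega))}$, with no hidden dependence on finer geometric features of $\Gamma(\omega)$.
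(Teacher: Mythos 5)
Your proposal follows essentially the same route as the paper's proof: pathwise push-forward to $\Gamma(\omega)$, the $H^2$ seminorm bound from Dziuk--Elliott with the constant $c(\omega)$ made uniform via the Weingarten map formula (\ref{eq:Weingarten}) and the uniform bounds (\ref{assump:1}), pull-back norm equivalence, and the energy estimate (\ref{eq:SurfStabEst}) to close. Your final step (squaring, integrating over $\Omega$, then invoking the energy estimate) is in fact a slightly more careful rendering of the paper's concluding argument, but the proof is the same.
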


\subsection{A coupled elliptic system on a random bulk-surface}
For the second problem, we consider a coupled elliptic system on a random bulk-surface motivated by the deterministic case 
analysed in \cite{elliott2012finite}.
More precisely, the geometric setting is as follows.
We let $\{\Gamma(\omega)\}$ denote a family of random, compact $C^2-$hypersurfaces in $\re^{n+1}$ enclosing open domains 
$D(\omega)$ and will denote the outer unit normal by $\nu^{\Gamma(\omega)}.$
The family of random domains will be prescribed by the mapping 
\begin{equation}
 \phi: \overline{D_0} \rightarrow \overline{D(\omega)} \quad \phi|_{\Gamma_0}: \Gamma_0 \rightarrow \Gamma(\omega),
\end{equation}
where the reference surface $\Gamma_0\subset \re^{n+1}$ will also be a compact $C^2-$hypersurface with open interior $D_0$. 
We will assume that the domain mapping is a $C^2-$diffeomorphism for $a.e.$  $\omega \in \Omega$ and additionally satisfies
\begin{equation}\label{assumptionMapping2}
 \|\phi(\omega, \cdot) \|_{C^2(\overline{D_0})}, \quad \|\phi^{-1}(\omega, \cdot)\|_{C^2(\overline{D(\omega)})} < C
\end{equation}
for a constant $C>0$ independent of $\omega$. 
The proposed coupled elliptic system on the random bulk-surface reads as follows
\begin{subequations}\label{eq:coupledEllipticRandom}
\begin{align}
 -\Delta u(\omega) + u(\omega) &= f(\omega) \quad \text{on } D(\omega)\\
 \alpha u(\omega) -\beta v(\omega) + \frac{\partial u}{\partial \nu_{\Gamma}}(\omega) &= 0 \quad \text{on } \Gamma(\omega)\\
 -\Delta_{\Gamma(\omega)}v(\omega) +v(\omega)  + \frac{\partial u}{\partial \nu_{\Gamma}}(\omega) &= f_{\Gamma}(\omega) \quad \text{on } \Gamma(\omega).
\end{align}
\end{subequations}
Here $\alpha, \beta>0$ are given positive constants and $
 f(\omega,\cdot):D(\omega) \rightarrow \re$ and $f_{\Gamma}(\omega, \cdot):\Gamma(\omega) \rightarrow \re$ are prescribed random fields.
As with our previous problem, our quantity of interest is the mean solution, that is the pair 
$\left( \mathbb{E}[u], \mathbb{E}[v] \right)$ defined by
$$
\mathbb{E}[u]:= \mathbb{E}[u\circ\phi]  \quad \mathbb{E}[v] := \mathbb{E}[v \circ \phi].
$$
Let us continue by reformulating the system (\ref{eq:coupledEllipticRandom}) onto the expected domain $\overline{D_0}$
with our previously calculated expressions for the Laplace-Beltrami operator (\ref{eq:Laplacebeltrami}) and the normal derivative (\ref{eq:NormalDeriv})
giving
\begin{subequations}\label{eq:coupledElliptic}
\begin{align}
\label{eq:coupledElliptic_bulk}
 -\frac{1}{\sqrt{g(\omega)}} \nabla \cdot \left( \sqrt{g(\omega)} G^{-1}(\omega) \nabla \hat{u}(\omega) \right) + \hat{u}(\omega) &= \hat{f}(\omega) \quad \text{in } D_0\\
 \label{eq:coupledElliptic_bc}
 \alpha \hat{u}(\omega) - \beta \hat{v}(\omega) 
 + \frac{\sqrt{g(\omega)}}{\sqrt{g_{\Gamma_0}(\omega)}} G^{-1}(\omega)\nu^{\Gamma_0} \cdot \nabla \hat{u}(\omega) &= 0 \quad \text{on } \Gamma_0\\
 \label{eq:coupledElliptic_surf}
 - \frac{1}{\sqrt{g_{\Gamma_0}(\omega)}} \nabla_{\Gamma_0} \cdot \left( \sqrt{g_{\Gamma_0}(\omega)} G_{\Gamma_0}^{-1}(\omega) \nabla_{\Gamma_0}\hat{v}(\omega)\right) + \hat{v}(\omega)  
 +\frac{\sqrt{g(\omega)}}{\sqrt{g_{\Gamma_0}(\omega)}}& G^{-1}(\omega)\nu^{\Gamma_0} \cdot \nabla \hat{u} 
 = \hat{f}_{\Gamma_0}(\omega) \quad \text{on } \Gamma_0.
\end{align}
\end{subequations}
Here the random coefficients are 
$$ G(\omega) = \nabla \phi^{\top}(\omega) \nabla \phi(\omega) \quad
 G_{\Gamma_0}(\omega) = \nabla_{\Gamma_0}\phi^{\top}(\omega) \nabla_{\Gamma_0}\phi(\omega) + \nu^{\Gamma_0} \otimes \nu^{\Gamma_0}
 $$
%
with $g(\omega) = detG(\omega)$, $g_{\Gamma_0}(\omega) = detG_{\Gamma_0}(\omega)$. For convenience, we have 
set $\hat{f}_{\Gamma_0} = f_{\Gamma} \circ \phi$. 
To derive a mean-weak formulation, we follow the 
variational approach presented in \cite{elliott2012finite}. 
We begin by multiplying through the bulk equation (\ref{eq:coupledElliptic_bulk}) by the area element $\sqrt{g}$ and integrating by parts which gives
\begin{align}\label{eq:meanDeriv1}
 \int_{D_0} \sqrt{g(\omega)} G^{-1}(\omega) \nabla \hat{u}(\omega) &\cdot \nabla \hat{\varphi}(\omega) + \hat{u}(\omega) \hat{\varphi}(\omega) \sqrt{g(\omega)}\\ 
 &- \int_{\Gamma_0} \left(\sqrt{g(\omega)}  G^{-1}(\omega) \nabla \hat{u}(\omega) \cdot \nu^{\Gamma_0}\right) \hat{\varphi}(\omega)
 = \int_{D_0}  \hat{f}(\omega) \hat{\varphi}(\omega)\sqrt{g(\omega)}\nonumber
 \end{align}
Similarly, for the surface equation (\ref{eq:coupledElliptic_surf}) we integrate by parts recalling that the hypersurface $\Gamma_0$ is without boundary, to obtain
 \begin{align}\label{eq:meanDeriv2}
 \int_{\Gamma_0} \sqrt{g_{\Gamma_0}(\omega)} G_{\Gamma_0}^{-1}(\omega) \nabla_{\Gamma_0}\hat{v}(\omega) \cdot \nabla_{\Gamma_0}\hat{\xi} 
 +  \hat{v}(\omega) \hat{\xi}(\omega) \sqrt{g_{\Gamma_0}(\omega)}\\
 + \int_{\Gamma_0}  \sqrt{g(\omega)} \left( G^{-1}(\omega)\nu^{\Gamma_0} \cdot \nabla \hat{u}(\omega) \right) \hat{\xi}(\omega)
 &= \int_{\Gamma_0}  \hat{f}_{\Gamma_0}(\omega) \hat{\xi}(\omega)\sqrt{g_{\Gamma_0}(\omega)}. \nonumber
\end{align}
Taking the weighted sum and substituting in the reformulated Robin boundary condition (\ref{eq:coupledElliptic_bc}), we arrive at the 
following mean-weak formulation:
\begin{problem}[Mean-weak formulation]
Given any $\hat{f} \in L^2(\Omega; L^2(D_0))$ and $\hat{f}_{\Gamma_0} \in L^2(\Omega;L^2(\Gamma_0))$, find  $\hat{u} \in L^2(\omega; H^1(D_0))$ and 
$\hat{v} \in L^2(\Omega;H^1(\Gamma_0))$ such that 
\begin{align*}\label{problem:meanCoupled}
 \alpha \int_{\Omega} \int_{D_0} \mathcal{D}(\omega) \nabla \hat{u}(\omega) &\cdot \nabla \hat{\varphi}(\omega)
+  \hat{u}(\omega) \hat{\varphi}(\omega)\sqrt{g(\omega)}\\
  + \beta \int_{\Omega} \int_{\Gamma_0} \mathcal{D}_{\Gamma_0}(\omega) \nabla_{\Gamma_0}\hat{v}(\omega) &\cdot \nabla_{\Gamma_0}\hat{\xi}(\omega)
 +  \hat{v}(\omega) \hat{\xi}(\omega)\sqrt{g_{\Gamma_0}(\omega)}\\
  + \int_{\Omega}\int_{\Gamma_0}\left( \alpha \hat{u}(\omega) - \beta \hat{v}(\omega) \right)& 
 ( \alpha \hat{\varphi}(\omega) - \beta \hat{\xi}(\omega) )  \sqrt{g_{\Gamma_0}(\omega)} \\
&= \alpha \int_{\Omega} \int_{D_0} \hat{f}(\omega) \hat{\varphi}(\omega) \sqrt{g(\omega)} 
 + \beta \int_{\Omega} \int_{\Gamma_0}  \hat{f}_{\Gamma_0}(\omega) \hat{\xi}(\omega)\sqrt{g_{\Gamma_0}(\omega)}.
\end{align*}
for every $\hat{\varphi} \in L^2(\Omega; H^1(D_0))$ and  $\hat{\xi} \in L^2(\Omega;H^1(\Gamma_0)).$ 
Here we set 
$\mathcal{D}_{\Gamma_0}(\omega) =\sqrt{g_{\Gamma_0}(\omega)}G_{\Gamma_0}^{-1}(\omega)$.
\end{problem}
We denote the associated bilinear form and linear functional stated above by
\begin{equation}
 a(\cdot,\cdot) : L^2(\Omega;V) \times L^2(\Omega;V) \rightarrow \re, \quad l(\cdot): L^2(\Omega;H) \rightarrow \re,
\end{equation}
where we have set $H= L^2(D_0) \times L^2(\Gamma_0)$ and $ V= H^1(D_0) \times H^1(\Gamma_0)$ to be Hilbert spaces equipped with respective inner products
\begin{align*}
 (( \hat{u}, \hat{v}), (\hat{\varphi}, \hat{\xi}))_{H} &= (\hat{u}, \hat{\varphi})_{L^2(D_0)}
+ (\hat{v}, \hat{\xi})_{L^2(\Gamma_0)} , \\
 (( \hat{u}, \hat{v}), (\hat{\varphi}, \hat{\xi}))_{V} &= (\hat{u}, \hat{\varphi})_{H^1(D_0)}
+ (\hat{v}, \hat{\xi})_{H^1(\Gamma_0)}. 
 \end{align*}
The mean-weak formulation thus reads as follows
 \begin{equation}\label{problem:meanCoupled}
 a( (\hat{u}, \hat{v}), ( \hat{\varphi}, \hat{\xi})) = l( (\hat{\varphi}, \hat{\xi})). 
\end{equation}
The following uniform bounds on the random bulk coefficients follow immediately from the assumption (\ref{assumptionMapping2}) on the random domain mapping. 
Furthermore, the derived bounds
on the surface coefficients presented in Proposition \ref{Prop:BoundsSurfCoef} also hold since the tangential derivatives of the surface parametrisation 
and its inverse are also uniformly bounded as a consequence of (\ref{assumptionMapping2}).

\begin{proposition}[Uniform bounds]
There exists constants $C_g, C_D>0$ such that the bulk area element $\sqrt{g(\omega)}$ and the singular values $\sigma_i$ of $D(\omega)$ are
uniformly bounded for all $x\in D_0$ and a.e. $\omega$ by
\begin{align}\label{eq:UniformBoundBulkCoef}
  0 < C_g^{-1} &\leq \sqrt{g(\omega, x)}\leq C_g < + \infty\\
   0 < C_D^{-1} &\leq \sigma_i \left( D(\omega, x) \right) \leq C_D < + \infty.
\end{align}
\end{proposition}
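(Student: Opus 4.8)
The plan is to transfer the uniform $C^2$ bounds (\ref{assumptionMapping2}) on the domain mapping and its inverse into two-sided bounds on the singular values of $\nabla \phi(\omega,\cdot)$, and then propagate these bounds through the algebraic definitions $G(\omega) = \nabla \phi^{\top}(\omega) \nabla \phi(\omega)$, $g(\omega) = \det G(\omega)$ and $\mathcal{D}(\omega) = \sqrt{g(\omega)}\, G^{-1}(\omega)$ (defined as in the single-surface case). Since $\mathcal{D}_{\Gamma_0}$ and $\sqrt{g_{\Gamma_0}}$ were already handled in Proposition \ref{Prop:BoundsSurfCoef}, the work here is really only for the bulk coefficients.

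First I would note that for a.e.\ $\omega$ and every $x \in D_0$, the largest singular value of $\nabla \phi(\omega,x)$ is at most $\|\phi(\omega,\cdot)\|_{C^2(\overline{D_0})} \le C$. For the lower bound, since $\phi(\omega,\cdot)$ is a $C^2$-diffeomorphism the chain rule applied to the identity $\phi^{-1}(\omega,\cdot) \circ \phi(\omega,\cdot) = \mathrm{id}$ gives $\left( \nabla \phi(\omega,x) \right)^{-1} = \nabla \phi^{-1}(\omega, \phi(\omega,x))$, whose operator norm is bounded by $\|\phi^{-1}(\omega,\cdot)\|_{C^2(\overline{D(\omega)})} \le C$; hence the smallest singular value of $\nabla \phi(\omega,x)$ is at least $C^{-1}$. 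Consequently the eigenvalues of the symmetric positive-definite matrix $G(\omega,x)$ lie in $[C^{-2}, C^2]$, and taking determinants yields $C^{-(n+1)} \le \sqrt{g(\omega,x)} \le C^{n+1}$, which is the first estimate.

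For the coefficient $\mathcal{D}(\omega,x) = \sqrt{g(\omega,x)}\, G^{-1}(\omega,x)$, observe that it is symmetric positive definite, so its singular values coincide with its eigenvalues $\sqrt{g(\omega,x)}/\lambda_i(G(\omega,x))$; combining $\lambda_i(G) \in [C^{-2}, C^2]$ with the bound on $\sqrt{g}$ gives $C^{-(n+3)} \le \sigma_i(\mathcal{D}(\omega,x)) \le C^{n+3}$, so one may take $C_g = C^{n+1}$ and $C_D = C^{n+3}$. The surface bounds then follow verbatim from Proposition \ref{Prop:BoundsSurfCoef}, since the tangential gradients $\nabla_{\Gamma_0} \phi(\omega,\cdot)$ and the corresponding surface inverses are controlled by the restrictions of $\nabla \phi(\omega,\cdot)$ and $\nabla \phi^{-1}(\omega,\cdot)$ and hence are uniformly bounded under (\ref{assumptionMapping2}).

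There is no genuinely hard step here; the only point requiring care is that the lower bound on the singular values of $\nabla \phi$ must be obtained \emph{uniformly} in $\omega$ rather than merely pathwise. This is precisely why (\ref{assumptionMapping2}) imposes a uniform bound on $\|\phi^{-1}(\omega,\cdot)\|_{C^2(\overline{D(\omega)})}$, and the chain-rule identity above is what converts that bound into the desired uniform coercivity of $G(\omega)$.
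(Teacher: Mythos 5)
Your proposal is correct and follows the same route the paper intends: the paper gives no separate proof, simply asserting that the bulk bounds "follow immediately" from the uniform $C^2$ bounds (\ref{assumptionMapping2}) on $\phi$ and $\phi^{-1}$, which is exactly the chain-rule/singular-value propagation you spell out, mirroring the argument of Proposition \ref{Prop:BoundsSurfCoef} for the surface coefficients. Your write-up just makes the omitted details (two-sided bounds on $\sigma_i(\nabla\phi)$, hence on the eigenvalues of $G$, $\sqrt{g}$ and $\mathcal{D}=\sqrt{g}\,G^{-1}$) explicit.
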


\begin{theorem}
Given any $(\hat{f}, \hat{f}_{\Gamma_0}) \in H$, there exist a unique solution $(\hat{u}, \hat{v}) \in L^2(\Omega;V)$ to 
(\ref{problem:meanCoupled}) which satisfies the energy estimate
\begin{equation}\label{eq:CoupledStabEst}
\|(\hat{u}, \hat{v})\|_{L^2(\Omega; V)} \leq c \|(\hat{f}, \hat{f}_{\Gamma_0})\|_{L^2(\Omega; H)}.
\end{equation}
\end{theorem}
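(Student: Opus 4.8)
The plan is to apply the Lax--Milgram theorem to the bilinear form $a(\cdot,\cdot)$ and linear functional $l(\cdot)$ of the mean-weak formulation (\ref{problem:meanCoupled}) on the separable Hilbert space $L^2(\Omega;V)$, where $V = H^1(D_0)\times H^1(\Gamma_0)$ is equipped with the product inner product introduced above (separability and the Hilbert structure follow from the tensor isomorphism $L^2(\Omega;V)\cong L^2(\Omega)\otimes V$). Once continuity of $l$ and $a$, together with coercivity of $a$, are in hand, existence and uniqueness are immediate, and the energy estimate (\ref{eq:CoupledStabEst}) drops out by testing with the solution itself.

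First I would establish continuity of $a$. The volume and surface gradient terms and the zero-order terms are each controlled by the Cauchy--Schwarz inequality in the product measure on $\Omega\times D_0$ (resp.\ $\Omega\times\Gamma_0$), combined with the uniform bounds on the singular values of $\mathcal{D}(\omega)$ and $\mathcal{D}_{\Gamma_0}(\omega)$ and on $\sqrt{g(\omega)}$ and $\sqrt{g_{\Gamma_0}(\omega)}$ supplied by the two preceding Propositions. The one genuinely nonroutine contribution is the boundary coupling term $\int_\Omega\int_{\Gamma_0}(\alpha\hat u - \beta\hat v)(\alpha\hat\varphi - \beta\hat\xi)\sqrt{g_{\Gamma_0}(\omega)}$: here I would invoke the deterministic, $\omega$-independent trace inequality $\|w\|_{L^2(\Gamma_0)}\le C_{\mathrm{tr}}\|w\|_{H^1(D_0)}$ pathwise and integrate in $\omega$, which shows that $L^2(\Omega;H^1(D_0))$ embeds continuously into $L^2(\Omega;L^2(\Gamma_0))$; bounding the coupling term then reduces once more to Cauchy--Schwarz and the uniform bound on $\sqrt{g_{\Gamma_0}}$. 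Continuity of $l$ is proved the same way but is strictly simpler, giving $|l((\hat\varphi,\hat\xi))|\le C(\|\hat f\|_{L^2(\Omega;L^2(D_0))}+\|\hat f_{\Gamma_0}\|_{L^2(\Omega;L^2(\Gamma_0))})\,\|(\hat\varphi,\hat\xi)\|_{L^2(\Omega;V)}$.

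For coercivity I would take $\hat\varphi=\hat u$, $\hat\xi=\hat v$. Since $\mathcal{D}(\omega)$ and $\mathcal{D}_{\Gamma_0}(\omega)$ are symmetric positive definite with singular values bounded below, the two elliptic gradient terms dominate $\alpha C_D^{-1}\|\nabla\hat u\|^2_{L^2(\Omega;L^2(D_0))}$ and $\beta C_{D_{\Gamma_0}}^{-1}\|\nabla_{\Gamma_0}\hat v\|^2_{L^2(\Omega;L^2(\Gamma_0))}$; the zero-order terms dominate $\alpha C_g^{-1}\|\hat u\|^2_{L^2(\Omega;L^2(D_0))}$ and $\beta C_{g_{\Gamma_0}}^{-1}\|\hat v\|^2_{L^2(\Omega;L^2(\Gamma_0))}$; and the coupling term becomes $\int_\Omega\int_{\Gamma_0}(\alpha\hat u-\beta\hat v)^2\sqrt{g_{\Gamma_0}(\omega)}\ge 0$, so it may simply be discarded. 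This yields $a((\hat u,\hat v),(\hat u,\hat v))\ge c\,\|(\hat u,\hat v)\|^2_{L^2(\Omega;V)}$ with $c=\min\{\alpha,\beta\}\min\{C_D^{-1},C_{D_{\Gamma_0}}^{-1},C_g^{-1},C_{g_{\Gamma_0}}^{-1}\}$. Lax--Milgram then gives the unique $(\hat u,\hat v)\in L^2(\Omega;V)$, and (\ref{eq:CoupledStabEst}) follows from $c\|(\hat u,\hat v)\|^2_{L^2(\Omega;V)}\le a((\hat u,\hat v),(\hat u,\hat v))=l((\hat u,\hat v))\le \|l\|\,\|(\hat u,\hat v)\|_{L^2(\Omega;V)}$.

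The main obstacle, albeit a modest one, is the boundary coupling term: one must confirm it is finite and continuous on $L^2(\Omega;V)$, which hinges on applying the trace theorem with an $\omega$-independent constant — legitimate here precisely because $D_0$ and $\Gamma_0$ are fixed deterministic Lipschitz domains rather than random — and one must observe that the symmetric positivity of this term is exactly what renders it harmless in the coercivity estimate, so that it need not be absorbed into the other terms.
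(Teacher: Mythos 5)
Your proposal is correct and follows essentially the same route as the paper: Lax--Milgram on $L^2(\Omega;V)$, with coercivity obtained from the uniform lower bounds on $\mathcal{D}$, $\mathcal{D}_{\Gamma_0}$, $\sqrt{g}$, $\sqrt{g_{\Gamma_0}}$ after discarding the nonnegative coupling term, and continuity obtained via Cauchy--Schwarz together with the deterministic trace inequality $\|w\|_{L^2(\Gamma_0)}\le c_T\|w\|_{H^1(D_0)}$ applied pathwise. The energy estimate from testing with the solution itself is likewise exactly the paper's argument.
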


\begin{proof}
With our uniform bounds (\ref{eq:UniformBoundBulkCoef}), (\ref{eq:uniformBoundSurfCoef1}) 
on the random bulk and surface coefficients, we can now proceed in verifying all the conditions of the Lax-Milgram 
theorem are satisified. 
 For a coercivity estimate, we argue
\begin{align*}
 a( (\hat{u}, \hat{v}), (\hat{u},\hat{v}))
 &\geq 
 \alpha_\text{min}\left(C_D^{-1}, C_g^{-1} \right) \|\hat{u}\|_{L^2(\Omega;H^1(D_0))}^2  
 + \beta_\text{min}\left( C_{D_{\Gamma_0}}^{-1}, C_{g_{\Gamma_0}}^{-1} \right)\| \hat{v} \|_{L^2(\Omega;H^1(\Gamma_0))}^2\\
 &+ C_{g_{\Gamma_0}}^{-1} \|\alpha \hat{u} - \beta \hat{v}\|_{L^2(\Omega; L^2(\Gamma_0))}^2\\
 &\geq C ( \|\hat{u}\|_{L^2(\Omega; H^1(D_0))}^2 + \| \hat{v}\|_{L^2(\Omega; H^1(\Gamma_0))}^2) \\
 &=C\|(\hat{u}, \hat{v})\|_{L^2(\Omega; V)}^2.
\end{align*}
For the continuity of the bilinear form $a(\cdot,\cdot)$, we apply the Cauchy-Schwarz inequality with the boundedness of the trace operator 
$\|f\|_{L^2(\Gamma_0)}\leq c_T \| f \|_{H^1(D_0)}$ as follows
\begin{align*}
 |a((\hat{u}, \hat{v})&, (\hat{\varphi},\hat{\xi}))| \\
 \leq& 
 \alpha_\text{max}(C_D, C_g) \|\hat{u}\|_{L^2(\Omega;H^1(D_0))} \|\hat{\varphi}\|_{L^2(\Omega;H^1(D_0))}
 +\beta_\text{max}(C_{D_{\Gamma_0}}, C_{g_{\Gamma_0}})\|\hat{v}\|_{L^2(\Omega;H^1(\Gamma_0))} \| \hat{\xi}\|_{L^2(\Omega;H^1(\Gamma_0))}\\
  &+ C_{g_{\Gamma_0}}\| \alpha \hat{u} - \beta \hat{v} \|_{L^2(\Omega; L^2(\Gamma_0))} \|\alpha \hat{\varphi} - \beta \hat{\xi}\|_{L^2(\Omega;L^2(\Gamma_0))}\\ 
 \leq&
 C \|(\hat{u}, \hat{v})\|_{L^2(\Omega; V)}\|(\hat{\varphi},\hat{\xi})\|_{L^2(\Omega;V)}\\
 &+ C_{g_{\Gamma_0}}  
 \left( \alpha c_T \|\hat{u}\|_{L^2(\Omega; H^1(D_0))} + \beta \|\hat{v}\|_{L^2(\Omega; L^2(\Gamma_0))} \right)
 \left( c_T \|\hat{\varphi}\|_{L^2(\Omega; H^1(D_0))} + \|\hat{\xi}\|_{L^2(\Omega; L^2(\Gamma_0))} \right)\\
 \leq&
  C \|(\hat{u}, \hat{v})\|_{L^2(\Omega;V)} \|(\hat{\varphi},\hat{\xi})\|_{L^2(\Omega; V)}.
\end{align*}
Thus we have the existence and uniqueness of a solution to (\ref{problem:meanCoupled}). The estimate (\ref{eq:CoupledStabEst}) then follows from
coercivity of $a(\cdot,\cdot).$
\end{proof}

\begin{theorem}[Regularity]\label{theorem:RegCoupled}
Given any $\hat{f} \in L^2(\Omega; L^2(D_0))$ and $\hat{f}_{\Gamma_0}\in L^2(\Omega; L^2(\Gamma_0))$, the mean-weak solution 
$(\hat{u}, \hat{v} ) $ to (\ref{problem:meanCoupled}) satisfies 
\begin{equation}
 \hat{u} \in L^2(\Omega; H^2(D_0)) \quad \hat{v} \in L^2(\Omega; H^2(\Gamma_0)). 
\end{equation}
Furthermore, we have
\begin{equation}
 \|(\hat{u}, \hat{v} ) \|_{L^2(\Omega; H^2(D_0)\times H^2(\Gamma_0))} \leq C \|( \hat{f},\hat{f}_{\Gamma_0}) \|_{L^2(\Omega;L^2(D_0) \times L^2(\Gamma_0))},
\end{equation}
where the constant $C>0$ depends only the geometry of the reference domain $\overline{D_0}$ and the uniform bound (\ref{assumptionMapping2})
on the random domain mapping.
\end{theorem}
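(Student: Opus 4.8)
The plan is to follow the strategy of the surface regularity result, Theorem~\ref{theorem:RegularitySurfMean}: transfer the mean-weak solution pathwise onto the random bulk-surface, invoke the deterministic $H^2$-regularity theory for the coupled problem, and then control the dependence of every resulting constant on the geometry of the realisation so that it is uniform in $\omega$. First I would push the solution forward: set $u(\omega)=\hat u(\omega)\circ\phi^{-1}(\omega)$ on $D(\omega)$ and $v(\omega)=\hat v(\omega)\circ\phi^{-1}(\omega)$ on $\Gamma(\omega)$, with $f(\omega)=\hat f(\omega)\circ\phi^{-1}(\omega)$ and $f_\Gamma(\omega)=\hat f_{\Gamma_0}(\omega)\circ\phi^{-1}(\omega)$. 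Since $L^2(\Omega;H^1)\cong L^2(\Omega)\otimes H^1$ and the reformulated system \eqref{eq:coupledElliptic} is an exact change of variables of \eqref{eq:coupledEllipticRandom}, testing the mean-weak formulation \eqref{problem:meanCoupled} against indicator-times-deterministic test functions shows that $(u(\omega),v(\omega))$ is, for a.e.\ $\omega$, a weak solution of the original coupled system \eqref{eq:coupledEllipticRandom} on the $C^2$ bulk-surface $(D(\omega),\Gamma(\omega))$.

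Next I would invoke the deterministic regularity theory for this coupled system from \cite{elliott2012finite}. For a.e.\ $\omega$, with $\Gamma(\omega)\in C^2$, $f(\omega)\in L^2(D(\omega))$ and $f_\Gamma(\omega)\in L^2(\Gamma(\omega))$, a bootstrap argument — elliptic regularity for the bulk problem $-\Delta u + u = f$ with the Robin condition read as a Neumann datum $\beta v - \alpha u|_{\Gamma(\omega)}$, the surface elliptic regularity of \cite[Section~3]{dziuk2013finite} applied to $-\Delta_{\Gamma(\omega)} v + v = f_\Gamma - \partial_{\nu_{\Gamma(\omega)}} u$, and the trace inequality $\|w\|_{H^{k-1/2}(\Gamma(\omega))}\le C\|w\|_{H^k(D(\omega))}$ — gives $u(\omega)\in H^2(D(\omega))$, $v(\omega)\in H^2(\Gamma(\omega))$ together with
\begin{equation*}
\|u(\omega)\|_{H^2(D(\omega))} + \|v(\omega)\|_{H^2(\Gamma(\omega))}
\le c(\omega)\big(\|f(\omega)\|_{L^2(D(\omega))} + \|f_\Gamma(\omega)\|_{L^2(\Gamma(\omega))} + \|u(\omega)\|_{H^1(D(\omega))} + \|v(\omega)\|_{H^1(\Gamma(\omega))}\big).
\end{equation*}

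I expect the main obstacle to be showing that $c(\omega)$ may be taken independent of $\omega$. It is controlled only by quantities bounded uniformly under \eqref{assumptionMapping2}: the bulk elliptic-regularity and trace constants depend on the $C^{1,1}$-character of $\partial D(\omega)=\Gamma(\omega)$, which is uniform because $\phi(\omega,\cdot)\in C^2(\overline{D_0})$ with uniformly bounded $C^2$-inverse; and, exactly as in the proof of Theorem~\ref{theorem:RegularitySurfMean}, the surface-regularity constant is governed by $\|\mathcal{H}^{\Gamma(\omega)}\|_{L^\infty(\Gamma(\omega))}$ and the mean curvature $H^{\Gamma(\omega)}=\mathrm{trace}\,\mathcal{H}^{\Gamma(\omega)}$, both bounded uniformly in $\omega$ via the pull-back formula \eqref{eq:Weingarten} and \eqref{assumptionMapping2}. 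Hence $c(\omega)\le c$ uniformly.

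Finally I would close the estimate. On each realisation the bilinear form of \eqref{problem:meanCoupled} is coercive and continuous with constants independent of $\omega$ (as in the derivation of \eqref{eq:CoupledStabEst}), so the pathwise energy bound $\|\hat u(\omega)\|_{H^1(D_0)}+\|\hat v(\omega)\|_{H^1(\Gamma_0)}\le c(\|\hat f(\omega)\|_{L^2(D_0)}+\|\hat f_{\Gamma_0}(\omega)\|_{L^2(\Gamma_0)})$ holds; transferring it to $D(\omega),\Gamma(\omega)$ with the norm equivalences $C^{-1}\|\hat w(\omega)\|_{H^k}\le\|w(\omega)\|_{H^k}\le C\|\hat w(\omega)\|_{H^k}$ ($k=0,1,2$, constants uniform in $\omega$ by \eqref{assumptionMapping2}) absorbs the $H^1$-terms on the right above into the data. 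Applying the same norm equivalences once more to pull $u(\omega),v(\omega)$ back to $D_0,\Gamma_0$ yields $\|\hat u(\omega)\|_{H^2(D_0)}+\|\hat v(\omega)\|_{H^2(\Gamma_0)}\le C(\|\hat f(\omega)\|_{L^2(D_0)}+\|\hat f_{\Gamma_0}(\omega)\|_{L^2(\Gamma_0)})$ for a.e.\ $\omega$; squaring and integrating over $\Omega$ gives the claimed estimate, with $C$ depending only on $\overline{D_0}$, $\Gamma_0$ and the bound in \eqref{assumptionMapping2}.
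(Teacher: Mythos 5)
Your route is genuinely different from the paper's, and it stumbles exactly where you yourself flag ``the main obstacle''. Pushing $(\hat u,\hat v)$ forward onto $(D(\omega),\Gamma(\omega))$ and invoking deterministic $H^2$ theory for the coupled system is fine pathwise, and for the \emph{surface} contribution the uniformity of the constant can indeed be justified as in Theorem~\ref{theorem:RegularitySurfMean}, because the estimate from \cite{dziuk2013finite} comes with an explicit constant expressed through the (extended) Weingarten map, which is controlled uniformly via (\ref{eq:Weingarten}) and (\ref{assumptionMapping2}). But for the \emph{bulk} Robin/Neumann problem on the varying domains $D(\omega)$ there is no analogous explicit formula: the $H^2$-regularity and trace constants depend on the boundary charts of $\Gamma(\omega)$ in a way the standard references do not quantify, and your assertion that they ``depend on the $C^{1,1}$-character of $\partial D(\omega)$, which is uniform'' is precisely the statement that needs a proof, not a remark. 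The only natural way to make it rigorous is to transport the bulk problem to a fixed domain and apply regularity theory there with uniformly elliptic, uniformly bounded coefficients --- at which point you have abandoned your route and reproduced the paper's.

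For comparison, the paper never leaves the reference domain for the bulk part. Choosing $\hat\varphi=0$ and then $\hat\xi=0$ in the pathwise weak form of (\ref{problem:meanCoupled}) decouples the system on $D_0$ and $\Gamma_0$: $\hat v(\omega)$ solves a surface equation on the \emph{fixed} $\Gamma_0$ with coefficient $\mathcal{D}_{\Gamma_0}(\omega)$, so Theorem~\ref{theorem:RegularitySurfMean} gives $\|\hat v(\omega)\|_{H^2(\Gamma_0)}\le C(\|\hat f_{\Gamma_0}(\omega)\|_{L^2(\Gamma_0)}+\|\hat u(\omega)\|_{L^2(\Gamma_0)})$ with $C$ independent of $\omega$; and $\hat u(\omega)$ solves a Robin problem on the fixed $C^2$ domain $D_0$ with uniformly elliptic coefficients $\mathcal{D}(\omega)\in C^1(\overline D_0)$ and data $\beta\sqrt{g_{\Gamma_0}(\omega)}\hat v(\omega)\in H^1(\Gamma_0)$, so classical regularity \cite{ladyzhenskaya1968nina} applies with a constant depending only on the ellipticity and coefficient bounds, i.e.\ only on (\ref{assumptionMapping2}) and the geometry of $D_0$. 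Combining the two bounds with the trace inequality and the stability estimate (\ref{eq:CoupledStabEst}) and integrating over $\Omega$ finishes the proof. If you wish to keep the pushforward strategy, you must either prove or cite a uniform-in-domain $H^2$ regularity theorem for Robin problems on domains with uniformly bounded $C^2$ parametrisations; without that, the step is a genuine gap.
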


 \begin{proof}
 Observe that for $a.e.$ $\omega \in \Omega$, the solution $(\hat{u},\hat{v})$ satisfies for every $\hat{\varphi} \in H^1(D_0)$ and $\hat{\xi}\in H^1(\Gamma_0)$,
\begin{align*}
 \alpha  \int_{D_0} \mathcal{D}(\omega) \nabla &\hat{u}(\omega) \cdot \nabla \hat{\varphi}
+  \hat{u}(\omega) \hat{\varphi}\sqrt{g(\omega)}
  + \beta  \int_{\Gamma_0} \mathcal{D}_{\Gamma_0}(\omega) \nabla_{\Gamma_0}\hat{v}(\omega) \cdot \nabla_{\Gamma_0}\hat{\xi}
 +  \hat{v}(\omega) \hat{\xi}\sqrt{g_{\Gamma_0}(\omega)}\\
  &+ \int_{\Gamma_0}\left( \alpha \hat{u}(\omega) - \beta \hat{v}(\omega) \right)
 ( \alpha \hat{\varphi} - \beta \hat{\xi})  \sqrt{g_{\Gamma_0}(\omega)} 
= \alpha  \int_{D_0} \hat{f}(\omega) \hat{\varphi} \sqrt{g(\omega)} 
 + \beta  \int_{\Gamma_0}  \hat{f}_{\Gamma_0}(\omega) \hat{\xi}\sqrt{g_{\Gamma_0}(\omega)}.
\end{align*}
 Setting $\hat{\varphi} = 0$ gives
\begin{align*}
   \beta  \int_{\Gamma_0} \mathcal{D}_{\Gamma_0}(\omega) \nabla_{\Gamma_0}&\hat{v}(\omega) \cdot \nabla_{\Gamma_0}\hat{\xi}
 +  \hat{v}(\omega) \hat{\xi}\sqrt{g_{\Gamma_0}(\omega)}\\
  - \int_{\Gamma_0}&\left( \alpha \hat{u}(\omega) - \beta \hat{v}(\omega) \right)
  \beta \hat{\xi}  \sqrt{g_{\Gamma_0}(\omega)} 
=  \beta  \int_{\Gamma_0}  \hat{f}_{\Gamma_0}(\omega) \hat{\xi}\sqrt{g_{\Gamma_0}(\omega)}.
\end{align*}
Hence we see that $\hat{v}(\omega)$ is the pathwise weak solution to the elliptic surface equation
\begin{align*}
 - \beta \nabla_{\Gamma_0} \cdot \left( \mathcal{D}_{\Gamma_0}(\omega) \nabla_{\Gamma_0}\hat{v}(\omega) \right)
 + ( \beta + \beta^2) \sqrt{g_{\Gamma_0}(\omega)} \hat{v}(\omega) = \alpha \beta \sqrt{g_{\Gamma_0}(\omega)} \hat{u}(\omega) 
 + \beta \sqrt{g_{\Gamma_0}(\omega)} \hat{f}_{\Gamma_0}(\omega).
\end{align*}
It therefore follows form the surface regularity result given in Theorem \ref{theorem:RegularitySurfMean} since $\hat{u}(\omega) \in L^2(\Gamma_0)$, 
that $\hat{v}(\omega) \in H^2(\Gamma_0)$ for a.e. $\omega$ and furthermore
\begin{equation}\label{eq:regProof1}
 \|\hat{v}(\omega)\|_{H^2(\Gamma_0)} \leq C \left( \| \hat{f}_{\Gamma_0}(\omega) \|_{L^2(\Gamma_0)} + \|\hat{u}(\omega)\|_{L^2(\Gamma_0)} \right)
\end{equation}
where the constant $C>0$ is independent of $\omega$.
To obtain higher regularity of the bulk quantity, we set $\hat{\xi} = 0$ yielding 
\begin{align*}
 \alpha  \int_{D_0} \mathcal{D}(\omega) \nabla &\hat{u}(\omega) \cdot \nabla \hat{\varphi}
+  \hat{u}(\omega) \hat{\varphi}\sqrt{g(\omega)}
  + \int_{\Gamma_0}\left( \alpha \hat{u}(\omega) - \beta \hat{v}(\omega) \right)
  \alpha \hat{\varphi}   \sqrt{g_{\Gamma_0}(\omega)} 
= \alpha  \int_{D_0} \hat{f}(\omega) \hat{\varphi} \sqrt{g(\omega)}. 
\end{align*}
This is precisely the weak formulation of the following elliptic boundary value problem subject to the reformulated Robin boundary condition
\begin{align*}
-\alpha \nabla \cdot \left( \mathcal{D}(\omega) \nabla \hat{u}(\omega) \right) + \alpha \sqrt{g(\omega)} \hat{u}(\omega) 
&= \alpha \sqrt{g(\omega)} \hat{f}(\omega) \quad \text{in } D_0\\
\mathcal{D}(\omega) \nabla \hat{u}(\omega) \cdot \nu^{\Gamma_0} + \alpha \sqrt{g_{\Gamma_0}(\omega)} \hat{u}(\omega) 
&= \beta \sqrt{g_{\Gamma_0}(\omega)} \hat{v}(\omega) \quad \text{on } \Gamma_0.
\end{align*}
Since the coefficients are sufficiently regular, more precisely
\begin{align*}
\mathcal{D}_{ij}(\omega) &\in C^1\left( \overline{D}_0 \right),  \quad
\alpha \sqrt{g(\omega)} \in L^{\infty}(D_0), \quad
\alpha \sqrt{g(\omega)} \hat{f}(\omega) \in L^2(D_0),\\
0 < \alpha_0 &\leq \alpha \sqrt{g_{\Gamma_0}(\omega)} \in C^1(\Gamma_0),\quad
\beta \sqrt{g_{\Gamma_0}(\omega)} \hat{v}(\omega) \in H^1(\Gamma_0), 
\end{align*}
and the boundary is sufficiently smooth $\Gamma_0 \in C^2$, we can apply standard regularity results \cite{ladyzhenskaya1968nina} to deduce 
$\hat{u}(\omega) \in H^2(D_0)$ for a.e. $\omega$ with the estimate
\begin{equation}\label{eq:regProof2}
\|\hat{u}(\omega) \|_{H^2(D_0)} \leq C\left( \|\hat{f}(\omega)\|_{L^2(D_0)} + \| \hat{v}(\omega)\|_{H^1(\Gamma_0)}\right). 
\end{equation}
Here the constant $C>0$ is independent of $\omega$ since all the coefficients are uniformly bounded and furthermore, $\mathcal{D}(\omega)$ is uniformly 
elliptic in $\omega$.
Combining (\ref{eq:regProof1}) and (\ref{eq:regProof2}) with the stability estimate (\ref{eq:CoupledStabEst}) and boundedness of the trace operator leads to 
\begin{align*}
 \|\hat{u}(\omega)\|_{H^2(D_0)} + \|\hat{v}(\omega)\|_{H^2(\Gamma_0)}
  &\leq C \left( \|\hat{f}_{\Gamma_0}(\omega) \|_{L^2(\Gamma_0)} + C_T \|\hat{u}(\omega)\|_{H^1(\Gamma_0)}
   + \|\hat{f}(\omega)\|_{L^2(D_0)} + \|\hat{v}(\omega)\|_{H^1(\Gamma_0)} \right)\\
   &\leq C \left( \|\hat{f}_{\Gamma_0}(\omega) \|_{L^2(\Gamma_0)} + \|\hat{f}(\omega) \|_{L^2(D_0)}\right).
\end{align*}
and hence the stated result. 
 \end{proof}


\section{An abstract numerical analysis of elliptic equations on random curved domains}

We continue by considering in an abstract setting, the mean-weak formulation of general elliptic equations 
on random curved domains after being transformed onto the expected domain via the given 
stochastic domain mapping. 
Working in this abstract framework, we will present and analyse a finite element discretisation 
coupled with the Monte-Carlo method to approximate our
quantity of interest, the mean solution.
As the expected domain is assumed to be curved, the proposed finite element method will involve perturbations of the variational set up 
corresponding to the approximation of the domain.
An optimal error bound in the energy norm 
for our non-conforming approach is derived with the help of the first lemma of Strang with
suitable assumptions on the finite element space approximation and arising consistency error. Furthermore, an $L^2(\Omega;L^2)$-type estimate is proved 
by a standard duality argument.
\subsection{Abstract mean-weak formulation}
Let $V$ and $H$ denote separable Hilbert spaces for which the embedding $V\hookrightarrow H$ is dense and continuous.
We assume that we are in the setting where we have a sample dependent bilinear form $\tilde{a}(\omega;\cdot,\cdot): V \times V \rightarrow \re$ and linear
functional $\tilde{l}(\omega; \cdot): H \rightarrow \re$ corresponding to the path-wise weak formulation 
\[
 \tilde{a}(\omega; u(\omega), \varphi) = \tilde{l}(\omega; \varphi) 
\]
of the elliptic equation after being reformulated onto the expected domain.
For convenience, we will omit the pull-back notation for functions $\hat{u}$ since all the subsequent analysis will be considered on the expected domain. 
The mean-weak formulation will thus in general read as follows:
\begin{problem}[Mean-weak formulation]
Find $u\in L^2(\Omega; V) $ such that for every $\varphi \in L^2(\Omega;V)$ we have
\begin{equation}\label{absmean}
\int_{\Omega}\tilde{a}(\omega;u(\omega), \varphi(\omega)) \, d \mathbb{P}(\omega) = \int_{\Omega}\tilde{l} (\omega;  \varphi(\omega))\, d \mathbb{P}(\omega).
\end{equation}
\end{problem} 
We denote the associated bilinear form $a(\cdot,\cdot):L^2(\Omega;V)\times L^2(\Omega;V)\rightarrow \re$ and linear functional $l(\cdot):L^2(\Omega;H)\rightarrow \re$ by
$$a(u, \varphi) = \int_{\Omega}\tilde{a}(\omega;u(\omega), \varphi(\omega)), \quad l(\varphi) = \int_{\Omega}\tilde{l} (\omega, \varphi(\omega)).$$
and shall assume all the requirements of the Lax-Milgram theorem are satisfied thus ensuring the existence and uniqueness of the solution.
\subsection{Abstract formulation of the finite element discretisation}
For a given $h \in (0,h_0)$, let $\mathcal{V}_h$ be a finite dimensional space that will represent a finite element space and 
let $V_h$ and $H_h$  denote the space $\mathcal{V}_h$ endowed with respective norms $\|\cdot\|_{V_h}$ and $\|\cdot \|_{H_h}$.
We assume that $V_h$ and $H_h$ are Hilbert spaces and furthermore that
$V_h \hookrightarrow H_h$ is uniformly embedded, that is 
\[
 \| \chi_h \|_{H_h} \leq c \| \chi_h\|_{V_h} \quad \text{for all } \chi_h \in V_h,
\]
for a constant $c>0$ independent of $h.$
In practice, the spaces $V_h$ and $H_h$ will represent equivalent Hilbert spaces to the continuous solution spaces $V$ and $H$
but posed over a discrete approximation of the curved domain, with $h$ denoting the discretisation parameter. 
We introduce the sample-dependent bilinear form and linear functional
$$\tilde{a}_h(\omega; \cdot, \cdot): \mathcal{V}_h \times \mathcal{V}_h \rightarrow \re \quad 
\tilde{l}_h(\omega; \cdot): \mathcal{V}_h\rightarrow \re,$$
that are perturbations 
approximating their continuous counterparts and will assume $\tilde{a}_h(\omega:\cdot,\cdot)$ is uniformly $V_h$-elliptic and bounded and 
additionally $\tilde{l}_h(\omega; \cdot)$ 
is uniformly bounded. More precisely, there exists constants $c_1,c_2,c_3>0$ independent of $\omega$ and $h$ such that 
\begin{align}
\label{assumption:Coer1}
 \tilde{a}_h(\omega; \chi_h, \chi_h) &\geq c_1 \|\chi\|_{V_h}^2\\
 \label{assumption:Coer2}
 |\tilde{a}_h(\omega; \chi_h, W_h)| &\leq c_2 \|\chi_h\|_{V_h} \| W_h\|_{V_h}\\
\label{assumption:Coer3}
 |\tilde{l}(\omega; \chi_h)| &\leq c_3 \|\chi_h\|_{H_h}.
\end{align}
The finite element approximation of the mean-weak formulation (\ref{absmean}) for a given a finite dimensional subspace $\mathcal{V}_h\subset V_h$
will then take the following form:
\begin{problem}[Semi-discrete problem]
Find $U_h \in L^2(\Omega;\mathcal{V}_h)$ such that
\begin{equation}\label{abssemi}
a_h(U_h, \phi_h) =\int_{\Omega} \tilde{a}_h(\omega; U_h(\omega), \phi_h(\omega))\, d \mathbb{P}(\omega) 
= \int_{\Omega} \tilde{l}_h(\omega;  \phi_h(\omega)) \, d \mathbb{P}(\omega) = l_h(\phi_h)
\end{equation}
for all $\phi_h \in L^2(\Omega;\mathcal{V}_h).$
\end{problem}
By our uniform assumptions of the bilinear form $\tilde{a}(\omega; \cdot, \cdot)$ and the linear functional $\tilde{l}(\omega;\cdot)$,
we deduce the existence and uniqueness of a solution to the semi-discrete problem.
\begin{theorem}
There exists a unique solution $U_h\in L^2(\Omega; V_h)$ 
to the semi-discrete problem (\ref{abssemi}) that satisfies 
\begin{equation}\label{discretestab}
\|U_h\|_{L^2(\Omega; V_h)} \leq C 
\end{equation}
with the constant $C>0$ independent of $h\in(0,h_0).$
\end{theorem}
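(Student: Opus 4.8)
The plan is to apply the Lax--Milgram theorem directly on the Bochner space $L^2(\Omega;\mathcal{V}_h)$, which is a (separable) Hilbert space because $\mathcal{V}_h = V_h$ is finite-dimensional; indeed $L^2(\Omega;V_h)\cong L^2(\Omega)\otimes V_h$ as in Section~2. First I would check that $a_h(\cdot,\cdot)$ and $l_h(\cdot)$ are well defined on these spaces: for $U_h,\phi_h\in L^2(\Omega;\mathcal{V}_h)$ the map $\omega\mapsto\tilde a_h(\omega;U_h(\omega),\phi_h(\omega))$ is measurable and, by the boundedness assumption (\ref{assumption:Coer2}), bounded in modulus by $c_2\|U_h(\omega)\|_{V_h}\|\phi_h(\omega)\|_{V_h}$, whose integral is finite by Cauchy--Schwarz in $\omega$; likewise $\omega\mapsto\tilde l_h(\omega;\phi_h(\omega))$ is integrable by (\ref{assumption:Coer3}) together with the uniform embedding $\|\phi_h(\omega)\|_{H_h}\le c\|\phi_h(\omega)\|_{V_h}$. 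So $a_h$ and $l_h$ make sense.

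Next I would verify the three Lax--Milgram hypotheses for the integrated forms, integrating the pathwise estimates. For boundedness of $a_h$, by (\ref{assumption:Coer2}) and Cauchy--Schwarz,
\[
|a_h(U_h,\phi_h)|\le c_2\int_\Omega\|U_h(\omega)\|_{V_h}\|\phi_h(\omega)\|_{V_h}\,d\mathbb{P}(\omega)\le c_2\|U_h\|_{L^2(\Omega;V_h)}\|\phi_h\|_{L^2(\Omega;V_h)}.
\]
For coercivity, by (\ref{assumption:Coer1}),
\[
a_h(\phi_h,\phi_h)=\int_\Omega\tilde a_h(\omega;\phi_h(\omega),\phi_h(\omega))\,d\mathbb{P}(\omega)\ge c_1\int_\Omega\|\phi_h(\omega)\|_{V_h}^2\,d\mathbb{P}(\omega)=c_1\|\phi_h\|_{L^2(\Omega;V_h)}^2.
\]
For boundedness of $l_h$, by (\ref{assumption:Coer3}) and the uniform embedding,
\[
|l_h(\phi_h)|\le c_3\int_\Omega\|\phi_h(\omega)\|_{H_h}\,d\mathbb{P}(\omega)\le c\,c_3\|\phi_h\|_{L^2(\Omega;V_h)}.
\]
All constants appearing are the standing $h$-independent constants $c_1,c_2,c_3$ and the embedding constant $c$. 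Lax--Milgram then delivers a unique $U_h\in L^2(\Omega;\mathcal{V}_h)$ solving (\ref{abssemi}).

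Finally, for the stability bound (\ref{discretestab}) I would test (\ref{abssemi}) with $\phi_h=U_h$ and chain coercivity with the boundedness of $l_h$:
\[
c_1\|U_h\|_{L^2(\Omega;V_h)}^2\le a_h(U_h,U_h)=l_h(U_h)\le c\,c_3\|U_h\|_{L^2(\Omega;V_h)},
\]
so $\|U_h\|_{L^2(\Omega;V_h)}\le c\,c_3/c_1=:C$, which depends only on the uniform constants and is therefore independent of $h\in(0,h_0)$. There is no genuine obstacle here; the only thing needing a little care is the measurability and integrability bookkeeping that makes $a_h$ and $l_h$ honest forms on the Bochner spaces (routine, since $\mathcal{V}_h$ is finite-dimensional), and checking that the final constant $C$ inherits its $h$-independence solely from (\ref{assumption:Coer1})--(\ref{assumption:Coer3}) and the uniform embedding $V_h\hookrightarrow H_h$.
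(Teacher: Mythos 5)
Your proof is correct and matches the paper's intended argument: the paper simply asserts existence, uniqueness and the uniform bound as a consequence of the uniform assumptions (\ref{assumption:Coer1})--(\ref{assumption:Coer3}) and the embedding $V_h\hookrightarrow H_h$, which is exactly the Lax--Milgram argument on $L^2(\Omega;\mathcal{V}_h)$ that you spell out, with the stability estimate obtained by testing with $U_h$.
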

Observe that if we let $\{\chi_j\}_{j=1}^N$ be a basis of $\mathcal{V}_h$ and express 
$U_h, \phi_h \in L^2(\Omega;\mathcal{V}_h) \cong L^2(\Omega) \otimes \mathcal{V}_h$ in the form
$$U_h(\omega) = \sum\limits_{j=1}^N U_j(\omega) \chi_j \quad \phi_h(\omega) = \sum\limits_{j=1}^N \phi_j(\omega) \chi_j,$$
where $U(\omega) = (U_1(\omega),...,U_N(\omega))^{\top} \in L^2(\Omega)^N$ and  $\Phi(\omega) = (\phi_1(\omega),...,\phi_N(\omega))^{\top} \in L^2(\Omega)^N$,
then (\ref{abssemi}) can be rewritten as
\begin{equation}
\int_{\Omega} \Phi(\omega) \cdot S(\omega) U(\omega) = \int_{\Omega} \Phi(\omega) \cdot F(\omega).
\end{equation}
Here the random stiffness matrix $S(\omega)=(S_{ij}(\omega))_{i,j=1,...,N}$ and load vector $F(\omega)=(F_j(\omega))_{j=1,...,N}$ are given by
$   
S_{ij}(\omega) = \tilde{a}_h(\omega; \chi_j, \chi_i),$ $ F_j(\omega) = \tilde{l}_h(\omega;\chi_j).$
Since $\phi_j(\omega) \in L^2(\Omega)$ are arbitrary, we deduce that the semi-discrete problem is equivalent to finding $U\in L^2(\Omega; \re^N)$ which satisfies
\begin{equation}
S(\omega) U(\omega) = F(\omega) \quad \text{for a.e. } \omega.
\end{equation}


\subsection{Assumptions on the finite element approximation and the continuous equations}
\label{section:assumpList}
 We now state all the necessary assumptions that will be required in deriving an error estimate for the semi-discrete solution.
 In order to compare our semi-discrete solution with the continuous solution, we first need to assume the existence of a lifting map.
 \begin{assumption}[Lifting map]
  There exists a linear mapping $\Lambda_h: \mathcal{V}_h \rightarrow V$ for which there exists constants $c_1, c_2>0$ independent of $h\in(0,h_0)$ such that for all $\chi_h \in \mathcal{V}_h$
\begin{align*}
\tag{L1}
    c_1 \|\chi_h\|_{ H_h} &\leq\| \Lambda_h \chi_h\|_{H} \leq  c_2 \| \chi_h\|_{H_h}\\
    \tag{L2}
    c_1 \|\chi_h\|_{ V_h} &\leq\| \Lambda_h \chi_h\|_{V} \leq  c_2 \| \chi_h\|_{V_h}.
\end{align*} 
 \end{assumption}
We denote the lifted finite dimensional space by $ V_h^l:= \Lambda_h \mathcal{V}_h$. Next, we introduce 
the Hilbert space $Z_0\hookrightarrow V$ which shall represent a space consisting of functions of higher regularity 
for which we assume we have the following interpolation estimate. 
\begin{assumption}[Approximation of finite element space]
There exists a well-defined interpolation operator $I_h:  Z_0 \rightarrow V_h^l$ for which there exists $c>0$ such that
\begin{align}
\tag{I1}
    \| \eta - I_h \eta \|_H + h \|\eta - I_h \eta \|_V &\leq c h^2 \|\eta\|_{Z_0} \quad \text{for } \eta \in  Z_0.
\end{align}
\end{assumption}
Naturally, the lifting map and interpolation operator can be extended to random functions in a pathwise sense
\begin{align*}
 \left(\Lambda_h \phi_h\right)(\omega):&= \Lambda_h \phi_h(\omega)\quad
\left(I_h \phi_h\right) (\omega) := I_h\phi_h(\omega),
\end{align*}
and the previous estimates (L1),(L2), (I1) hold for their respective norms $\|\cdot\|_{L^2(\Omega;H)}$ and $\|\cdot\|_{L^2(\Omega;V)}$. 
We continue by imposing bounds on the consistency error arising from the pertubation of the variational form. 
For this, we will assume the existence of an inverse lifting map $\Lambda_h : L^2(\Omega;Z_0)\rightarrow L^2(\Omega; V_h)$ and 
will denote inverse lift of a function $w$ by $w^{-l}$.
\begin{assumption}[Consistency error]
Given any $W_h, \phi_h \in L^2(\Omega;\mathcal{V}_h)$ with corresponding lifts $w_h, \chi_h \in L^2(\Omega;V_h^l)$, we have
the bounds
\begin{align}
\tag{P1} 
|l(\varphi_h) - l_h(\phi_h)| &\leq c h^{2} \|\varphi_h\|_{L^2(\Omega;H)} \\
\tag{P2}
|a( w_h, \varphi_h) - a_h(W_h, \phi_h)| &\leq c h \|w_h\|_V \|\varphi_h\|_{L^2(\Omega;V)}.
\end{align}
Furthermore, for any $w, \varphi \in L^2(\Omega;Z_0)$ with inverse lifts $w^{-l}, \varphi^{-l}$ we have
\begin{align}
\tag{P3}
|a(w, \varphi) - a_h(w^{-l}, \varphi^{-l})| \leq ch^2 \| w\|_{L^2(\Omega;Z_0)} \|\varphi\|_{L^2(\Omega;Z_0)}.
\end{align}
\end{assumption}
Our final assumption will be on the regularity of an associated dual problem that will enable us to derive an
 $L^2(\Omega;H)$ error estimate using the standard Aubin-Nitsche trick. The associated dual problem reads as follows:
\begin{problem}[Dual problem]
For a given $g \in L^2(\Omega;H)$, find $w(g) \in L^2(\Omega; V)$ such that
\begin{equation}\label{dual}
a(\varphi, w(g)) = (g, \varphi)_{L^2(\Omega;H)} \quad \text{for } \varphi \in L^2(\Omega; V).
\end{equation}
\end{problem}
Here $(\cdot,\cdot)_{L^2(\Omega;H)}$ denotes the inner product on the Hilbert space $L^2(\Omega;H)$. 
\begin{assumption}[Regularity of dual problem]
The solution $w(g)$ to the dual problem belongs to space $L^2(\Omega;Z_0)$ and furthermore satisfies
\begin{equation}\LeftEqNo
\|w(g)\|_{L^2(\Omega;Z_0)} \leq c \|g\|_{L^2(\Omega; H)}
\tag{R1}
\end{equation}
for a constant $c>0$ independent of both $g$ and  $h\in(0,h_0)$. 
\end{assumption}

\subsection{Error estimates for the semi-discrete solution}
Recall that the abstract finite element space $\mathcal{V}_h$ is not necessarily contained in the Hilbert space $V$.
However, with the assumed existence of a lifting map
$$
 \Lambda_h : L^2(\Omega; \mathcal{V}_h) \rightarrow L^2(\Omega; V_h^l) \subset L^2(\Omega; V),
$$
we can lift the discrete bilinear form $a_h(\cdot, \cdot)$ and the linear functional $l_h(\cdot)$ onto 
the space $L^2(\Omega; V_h^l)$ 
by the following relations for $w_h = \Lambda_h W_h, \varphi_h= \Lambda_h \phi_h \in L^2(\Omega; V_h^l)$
\begin{align}
 a_h^l(w_h, \varphi_h) :&= a_h(W_h, \phi_h) \quad
 l_h^l(w_h) := l_h(W_h),
\end{align}
thus inducing a third variational problem equivalent to (\ref{abssemi}).
\begin{problem}[Lifted semi-discrete problem]
Find $u_h \in L^2(\Omega; V_h^l)$ such that for every $\varphi_h \in L^2(\Omega; V_h^l)$ we have
\begin{equation}\label{eq:LiftedSemi}
 a_h^l(u_h, \varphi_h) = l_h^l(\varphi_h).
 \end{equation}
\end{problem}
Since $L^2(\Omega; V_h^l)$ is contained in the solution space $L^2(\Omega;V)$, 
the lifted semi-discrete problem fits into the abstract non-conforming finite element setting considered in the first lemma of Strang \cite{strang1973analysis}. 
We will now present these results in the context of our random Hilbert space setting.
\begin{lemma}[First lemma of Strang]
Let $u_h$ denote the solution to the lifted semi-discrete problem (\ref{eq:LiftedSemi}) and assume that
the bilinear form $a_h^l(\cdot,\cdot)$ is uniformly $L^2(\Omega;V_h^l)$-elliptic, i.e. for some $\alpha >0$
 \[
  a_h^l(\varphi_h, \varphi_h) \geq \alpha \| \varphi\|_{L^2(\Omega;V)}^2
 \]
for all $\varphi \in L^2(\Omega;V_h^l)$ and $h\in (0,h_0)$. Then there exists a constant $C>0$ independent of $h$ such that
\begin{align}
\label{eq:strang}
 \|u - u_h\|_{L^2(\Omega;V)} 
 &\lesssim \inf\limits_{\varphi_h \in L^2(\Omega; V_h^l)}
 \left( 
 \| u - \varphi_h\|_{L^2(\Omega;V)}  
  + \sup\limits_{w_h \in L^2(\Omega;V_h^l)} \frac{|a(\varphi_h, w_h) - a_h^l(\varphi_h, w_h)|}{\|w_h\|_{L^2(\Omega;V)}} \right)\\
  &+ \sup\limits_{w_h \in L^2(\Omega; V_h^l)} \frac{|l(w_h) - l_h^l(w_h)|}{\|w_h\|_{L^2(\Omega;V)}}. \nonumber
\end{align}
\end{lemma}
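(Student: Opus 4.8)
The plan is to transcribe the classical proof of Strang's first lemma into the present tensorised setting, with $L^2(\Omega;V)$ playing the role of the ambient Hilbert space and $L^2(\Omega;V_h^l)$ the role of a \emph{conforming} discrete subspace --- conforming because the lifting assumption yields $V_h^l\subset V$, hence $L^2(\Omega;V_h^l)\subset L^2(\Omega;V)$. The only continuous-level ingredient needed beyond the stated hypotheses is boundedness of $a(\cdot,\cdot)$ on $L^2(\Omega;V)\times L^2(\Omega;V)$, which is part of the Lax--Milgram requirements already assumed for the mean-weak formulation; denote its continuity constant by $C_a$.

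First I would fix an arbitrary $\varphi_h\in L^2(\Omega;V_h^l)$ and set $w_h:=u_h-\varphi_h\in L^2(\Omega;V_h^l)$. Uniform $L^2(\Omega;V_h^l)$-ellipticity of $a_h^l(\cdot,\cdot)$, together with the lifted semi-discrete equation (\ref{eq:LiftedSemi}) tested against $w_h$, gives
\[
\alpha\,\|w_h\|_{L^2(\Omega;V)}^2 \le a_h^l(w_h,w_h) = a_h^l(u_h,w_h) - a_h^l(\varphi_h,w_h) = l_h^l(w_h) - a_h^l(\varphi_h,w_h).
\]
Next I would introduce the continuous forms by adding and subtracting $l(w_h)$ and $a(\varphi_h,w_h)$, using the Galerkin identity $l(w_h)=a(u,w_h)$ --- valid since $w_h\in L^2(\Omega;V_h^l)\subset L^2(\Omega;V)$ --- to combine $l(w_h)-a(\varphi_h,w_h)=a(u-\varphi_h,w_h)$. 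This yields
\[
\alpha\,\|w_h\|_{L^2(\Omega;V)}^2 \le \bigl(l_h^l(w_h)-l(w_h)\bigr) + a(u-\varphi_h,w_h) + \bigl(a(\varphi_h,w_h)-a_h^l(\varphi_h,w_h)\bigr).
\]

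Then I would estimate the middle term by continuity, $|a(u-\varphi_h,w_h)|\le C_a\|u-\varphi_h\|_{L^2(\Omega;V)}\|w_h\|_{L^2(\Omega;V)}$, divide through by $\|w_h\|_{L^2(\Omega;V)}$, and bound the first and third terms by the two suprema over $L^2(\Omega;V_h^l)$ appearing on the right-hand side of (\ref{eq:strang}). This produces a bound on $\|u_h-\varphi_h\|_{L^2(\Omega;V)}$ of the desired form; the triangle inequality $\|u-u_h\|_{L^2(\Omega;V)}\le\|u-\varphi_h\|_{L^2(\Omega;V)}+\|u_h-\varphi_h\|_{L^2(\Omega;V)}$ followed by taking the infimum over $\varphi_h\in L^2(\Omega;V_h^l)$ gives (\ref{eq:strang}), with $C$ depending only on $\alpha$ and $C_a$.

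I do not expect a genuine obstacle here: the argument is the standard one, and the passage to the Bochner space is purely formal, since $a$, $a_h^l$, $l$, $l_h^l$ are bona fide bilinear/linear forms on the relevant separable Hilbert spaces and the inclusion $L^2(\Omega;V_h^l)\subset L^2(\Omega;V)$ keeps the setting conforming. The one point to treat with care is uniformity of constants in $h$: $\alpha$ is uniform by hypothesis and $C_a$ is inherited from the $\omega$-uniform bounds underlying the continuous bilinear form, so the constant $C$ in (\ref{eq:strang}) is indeed independent of $h\in(0,h_0)$.
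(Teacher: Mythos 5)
Your proof is correct and is precisely the classical first-Strang-lemma argument that the paper invokes by citation to Strang rather than reproving: coercivity of $a_h^l$ applied to $w_h=u_h-\varphi_h$, the Galerkin identity $l(w_h)=a(u,w_h)$ (legitimate because $L^2(\Omega;V_h^l)\subset L^2(\Omega;V)$), continuity of $a$ from the assumed Lax--Milgram hypotheses, division by $\|w_h\|_{L^2(\Omega;V)}$, triangle inequality, and the infimum over $\varphi_h$, with all constants independent of $h$ exactly as you say. The only cosmetic point is to note that when $u_h=\varphi_h$ the division step is vacuous and the estimate holds trivially.
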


\begin{theorem}[Error estimates]\label{errorest}
Let $u$ denote the solution of the continuous problem (\ref{absmean}) and assume that it is sufficiently regular $u\in L^2(\Omega; Z_0)$
and let $U_h$ be the discrete solution of (\ref{abssemi}) with lift $u_h=\Lambda_h U_h. $
Then with the assumptions listed in section \ref{section:assumpList} satisfied, there exists a constant $c>0$ such that for all $h\in(0,h_0)$ we have the error estimate
\begin{equation}
\|u - u_h \|_{L^2(\Omega; H)} + h \|u - u_h \|_{L^2(\Omega; V) } \leq ch^{2} \|u\|_{L^2(\Omega; Z_0)}.
\end{equation}
\end{theorem}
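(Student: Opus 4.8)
The plan is to establish the two parts of the estimate separately. The energy-norm part $h\|u-u_h\|_{L^2(\Omega;V)}\lesssim h^2\|u\|_{L^2(\Omega;Z_0)}$ will come from the first lemma of Strang applied to the lifted semi-discrete problem (\ref{eq:LiftedSemi}), and the $L^2(\Omega;H)$ part from an Aubin--Nitsche duality argument based on the dual problem (\ref{dual}) and its regularity (R1). Since several consistency terms only carry the factor $h^2$ with no explicit norm of $u$, I would throughout use that $\|u\|_{L^2(\Omega;Z_0)}$ controls the data (in the concrete applications the load encodes a second-order operator applied to $u$), so that every term may be written with the factor $\|u\|_{L^2(\Omega;Z_0)}$.

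\emph{Energy-norm estimate.} First I would verify the hypothesis of the first lemma of Strang: the uniform $L^2(\Omega;V_h^l)$-ellipticity of $a_h^l$ is inherited from the uniform $V_h$-ellipticity (\ref{assumption:Coer1}) of $\tilde a_h$ via the lower norm equivalence in (L2). Applying the lemma with $\varphi_h=I_h u\in L^2(\Omega;V_h^l)$ (admissible since $u\in L^2(\Omega;Z_0)$), the best-approximation term in (\ref{eq:strang}) is controlled by (I1); the bilinear consistency term by (P2), using the stability $\|I_h u\|_{L^2(\Omega;V)}\le\|u\|_{L^2(\Omega;V)}+\|u-I_h u\|_{L^2(\Omega;V)}\lesssim\|u\|_{L^2(\Omega;Z_0)}$; and the linear consistency term by (P1) together with the uniform embedding $V_h\hookrightarrow H_h$. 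This already gives $h\|u-u_h\|_{L^2(\Omega;V)}\le ch^2\|u\|_{L^2(\Omega;Z_0)}$.

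\emph{$L^2(\Omega;H)$ estimate.} Put $e:=u-u_h$ and let $w\in L^2(\Omega;Z_0)$ solve the dual problem (\ref{dual}) with datum $g=e$, so that $\|w\|_{L^2(\Omega;Z_0)}\le c\|e\|_{L^2(\Omega;H)}$ by (R1). Since $\|e\|_{L^2(\Omega;H)}^2=a(e,w)$, I would insert $I_h w\in L^2(\Omega;V_h^l)$ and combine the Galerkin identities $a(u,\cdot)=l(\cdot)$ on $L^2(\Omega;V)$ and $a_h^l(u_h,\cdot)=l_h^l(\cdot)$ on $L^2(\Omega;V_h^l)$ to obtain
\begin{equation*}
\|e\|_{L^2(\Omega;H)}^2 = a(e,\,w-I_h w) + \bigl(l(I_h w)-l_h^l(I_h w)\bigr) - \bigl(a(u_h,I_h w)-a_h^l(u_h,I_h w)\bigr).
\end{equation*}
The first term is $\lesssim\|e\|_{L^2(\Omega;V)}\,\|w-I_h w\|_{L^2(\Omega;V)}\lesssim h^2\|u\|_{L^2(\Omega;Z_0)}\|e\|_{L^2(\Omega;H)}$ by boundedness of $a$, the energy estimate just obtained, (I1) and (R1); the second is $\lesssim h^2\|w\|_{L^2(\Omega;Z_0)}$ by (P1). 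For the decisive last term I would split $u_h=(u_h-I_h u)+I_h u$: on $u_h-I_h u\in L^2(\Omega;V_h^l)$ the bound (P2) applies and absorbs the extra factor $\|u_h-I_h u\|_{L^2(\Omega;V)}\lesssim h\|u\|_{L^2(\Omega;Z_0)}$, producing an $O(h^2)$ contribution; on the remaining pair $(I_h u,I_h w)$ I would use (P3) (here $I_h$ is the lift of the inverse lift, so that (P3) is applicable) to replace $a_h^l(I_h u,I_h w)$ by $a(u,w)$ up to $O(h^2\|u\|_{Z_0}\|w\|_{Z_0})$, and then expand $a(I_h u,I_h w)-a(u,w)$ by inserting $I_h u-u$ and $I_h w-w$: the cross term $a(I_h u-u,I_h w-w)$ is $O(h^2)$ by two applications of (I1), the term $a(I_h u-u,w)$ equals $(e,I_h u-u)_{L^2(\Omega;H)}$ by the dual equation and is $O(h^2\|u\|_{Z_0}\|e\|_H)$ by (I1), and the term $a(u,I_h w-w)$ equals $l(I_h w-w)$ by the Galerkin identity and is $O(h^2\|u\|_{Z_0})$. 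Collecting all contributions and using (R1) gives $\|e\|_{L^2(\Omega;H)}^2\lesssim h^2\|u\|_{L^2(\Omega;Z_0)}\|e\|_{L^2(\Omega;H)}$, and division by $\|e\|_{L^2(\Omega;H)}$ completes the proof.

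\emph{Main obstacle.} The delicate point is the consistency term $a(u_h,I_h w)-a_h^l(u_h,I_h w)$: no single hypothesis controls it at the order $h^2$ demanded by the duality argument, since (P2) is only first order in $h$ and (P3) applies only to pairs of genuinely $Z_0$-regular functions (reached through the inverse lift, not through $I_h U_h$). The way around this --- isolating the low-regularity part $u_h-I_h u$ so that the first-order bound (P2) is multiplied by a factor that is itself $O(h)$, while handling the smooth remainder by (P3) together with interpolation and a \emph{second} use of the dual equation to absorb the stray first-order term $a(I_h u-u,w)$ --- is the technical heart of the proof. One must also keep track of the compatibility of $I_h$ with the inverse lift used in (P3), of the uniform (in $\omega$ and $h$) embedding $V_h\hookrightarrow H_h$, and of the fact that data-only quantities such as $l(I_h w-w)$ inherit the factor $\|u\|_{L^2(\Omega;Z_0)}$, which holds in the applications because the load equals a second-order operator applied to $u$.
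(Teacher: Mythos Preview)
Your overall strategy --- Strang's first lemma for the $L^2(\Omega;V)$ estimate, Aubin--Nitsche for the $L^2(\Omega;H)$ estimate, with the same three-term splitting
\[
(e,g)_{L^2(\Omega;H)} = a(e,w(g)-I_h w) + \bigl(l(I_h w)-l_h^l(I_h w)\bigr) - \bigl(a(u_h,I_h w)-a_h^l(u_h,I_h w)\bigr)
\]
--- is exactly the paper's. The energy estimate and the first two terms of the duality argument are handled correctly and agree with the paper.

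The gap is in your treatment of the third term. You split $u_h=(u_h-I_h u)+I_h u$ and then claim that (P3) lets you ``replace $a_h^l(I_h u,I_h w)$ by $a(u,w)$ up to $O(h^2)$''. It does not. Assumption (P3) bounds $a(u,w)-a_h(u^{-l},w^{-l})=a(u,w)-a_h^l(u,w)$ for $u,w\in L^2(\Omega;Z_0)$; it says nothing about $a_h^l(I_h u,I_h w)=a_h(\hat I_h u^{-l},\hat I_h w^{-l})$, whose arguments are the \emph{discrete interpolants}, not the inverse lifts $u^{-l},w^{-l}$. Your subsequent expansion of $a(I_h u,I_h w)-a(u,w)$ only treats the continuous bilinear form and leaves the difference $a_h^l(u,w)-a_h^l(I_h u,I_h w)$ unaccounted for; that difference is only $O(h)$ by boundedness, so the argument as written does not close at order $h^2$.

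The paper avoids this by decomposing in the other slot. Writing $F(\cdot,\cdot)=a(\cdot,\cdot)-a_h^l(\cdot,\cdot)$, it uses the identity
\[
-F(u_h,I_h w)\;=\;F\bigl(u_h,\,w-I_h w\bigr)\;+\;F\bigl(u-u_h,\,w\bigr)\;-\;F(u,w),
\]
with $w=w(g)$. Now (P3) is applied \emph{only} to $F(u,w)$, where both arguments are genuinely in $L^2(\Omega;Z_0)$, giving $O(h^2)$. For the first two terms (P2) suffices because one factor is already $O(h)$: $\|w-I_h w\|_{L^2(\Omega;V)}\lesssim h\|w\|_{L^2(\Omega;Z_0)}$ by (I1) and $\|u-u_h\|_{L^2(\Omega;V)}\lesssim h\|u\|_{L^2(\Omega;Z_0)}$ by the energy estimate. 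This three-term split is both shorter than yours (no second use of the dual equation, no cross term $a(I_h u-u,I_h w-w)$) and places (P3) exactly where its hypotheses are met. If you prefer to keep the split in the first argument, the repair is to decompose $F(I_h u,I_h w)=F(I_h u-u,I_h w)+F(u,I_h w-w)+F(u,w)$ and apply (P2) to the first two pieces and (P3) only to the last; but this is just the paper's argument with the roles of the two slots interchanged.
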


\begin{proof}
It follows from the uniform ellipticity assumption (\ref{assumption:Coer1})
on the bilinear form $a_h(\cdot,\cdot)$ and the norm equivalence of the lifting map, 
that for any $\varphi_h = \Lambda_h \phi_h \in L^2(\Omega;V_h^l)$ we have
\[
 a_h^l(\varphi_h, \varphi_h) = a_h(\phi_h, \phi_h) \geq c \|\phi_h\|_{L^{2}(\Omega;V_h)}^2 \geq c \| \varphi_h\|_{L^2(\Omega;V)}^2.
\]
Therefore the bilinear form $a_h^l(\cdot,\cdot)$ is uniformly coercive and thus we can apply the first lemma of Strang. 
Substituting $\varphi_h = I_h u$ into the estimate (\ref{eq:strang}) and inserting the consistency bounds (P1), (P2) gives
\[
 \|u - u_h\|_{L^2(\Omega;V)} \lesssim \|u - I_h u \|_{L^2(\Omega;V)} + h \| I_h\|_{L^2(\Omega;V)} + h^2.
\]
Hence with the interpolation estimate (I1) applied to $u\in L^2(\Omega; Z_0)$ we obtain
\begin{equation}
\label{eq:lowerErrorEst}
 \|u - u_h\|_{L^2(\Omega;V)} \lesssim  h \|u \|_{L^2(\Omega;Z_0)}. 
\end{equation}
For the $L^2(\Omega; H)-$estimate, we use a standard duality argument. Given $g \in L^2(\Omega;H)$ and an arbitrary $w_h \in L^2(\Omega; V_h^l)$
we have
\begin{align*}
 (u-u_h, g)_{L^2(\Omega;H)} &= a(u-u_h, w(g) - w_h) + a(u-u_h, w_h)\\
 &= a(u-u_h, w(g) - w_h)
 + l(w_h) - l_h^l(w_h) 
 -\left( a(u_h,w_h)- a_h^l(u_h,w_h) \right)\\
 &= \RN{1} + \RN{2} + \RN{3}.
\end{align*}
Choosing $w_h = I_h w(g)$ and applying the interpolation estimate (I1) to the solution of the dual problem which is assumed (R1) to be 
sufficiently regular $w(g) \in L^2(\Omega; Z_0)$ gives
\begin{align*}
 |\RN{1}| & \lesssim \|u-u_h\|_{L^2(\Omega; V)} \| w(g) - I_h w(g)\|_{L^2(\Omega;V)} \\
 &\lesssim h^2 \|u\|_{L^2(\Omega;Z_0)}  \| w(g) \|_{L^2(\Omega;Z_0)}\\
 &\lesssim h^2 \|u\|_{L^2(\Omega; Z_0)} \|g \|_{L^2(\Omega; H)}.
\end{align*}
We bound the consistency error in the second term with (P2) giving 
\begin{align*}
 |\RN{2}| &\lesssim h^2 \| I_h w(g) \|_{L^2(\Omega;V) } 
 \lesssim h^2 \|w(g) \|_{L^2(\Omega;Z_0)} 
 \lesssim h^2 \|g\|_{L^2(\Omega; H)}.
\end{align*}
To obtain a bound of order $h^2$ for the third term, we begin by rewriting it as follows
\begin{align*}
 \RN{3} &= a(u_h, w(g) - I_h w(g) ) - a_h^l( u_h, w(g) - I_hw(g) )\\
 &+ a(u-u_h, w(g)) - a_h^l(u-u_h,w(g))\\
 &- \left( a(u,w(g)) - a_h^l(u,w(g)) \right).
\end{align*}
Now we are able to apply the estimate (P3) to the last term since both $u, w(g) \in L^2(\Omega;Z_0)$ and can then follow a similar argument
as to the previous cases for the first two terms which leads to 
\begin{align*}
 |\RN{3}| &\lesssim 
 h \| u_h\|_{L^2(\Omega;V)} \|w(g) - I_h w(g) \|_{L^2(\Omega; V)}
 +h \| u - u_h \|_{L^2(\Omega;V)} \| w(g) \|_{L^2(\Omega; V)}\\
 &+ h^2 \|u\|_{L^2(\Omega; Z_0)} \| w(g) \|_{L^2(\Omega; Z_0)}\\
 &\lesssim h^2 \| w(g)\|_{L^2(\Omega; Z_0)}
 + h^2 \|u \|_{L^2(\Omega;Z_0)} \| w(g) \|_{L^2(\Omega; Z_0)}\\
 &\lesssim h^2 \|u \|_{L^2(\Omega; Z_0)} \|g\|_{L^2(\Omega; H)}. 
\end{align*}
Combining the results gives the stated result
\[
 \| u - u_h\|_{L^2(\Omega;H)} = \sup\limits_{g \in L^2(\Omega; H) \setminus \{0\}} \frac{(u-u_h,g)_{L^2(\Omega;H)}}{\|g\|_{L^2(\Omega;H)}} 
 \lesssim h^2 \|u \|_{L^2(\Omega; Z_0)}.
\]

\end{proof}
We conclude our abstract error analysis by combining our finite element discretisation with the Monte-Carlo method to estimate our quantity of interest, the mean solution $E[u]$. 
Recall, that for an arbitrary Hilbert space $\mathcal{H}$, the Monte-Carlo estimator of the expectation 
of a random variable $Y \in L^2(\Omega; \mathcal{H})$ is a $\mathcal{H}$-valued random variable $E_M[Y] : \otimes_{i=1}^M \Omega \rightarrow \mathcal{H}$ defined by 
$$
E_M[Y] = \frac{1}{M} \sum\limits_{i=1}^M \hat{Y}_i
$$
where $M\in \mathbb{N}$ is the chosen number of samples taken and $\hat{Y}_i$ are independent identically distributed copies of the random variable $Y$.
Furthermore, we have the following well-known convergence result, see \cite{lord2014introduction}.
\begin{lemma}[Monte-Carlo convergence rate]\label{mcest}
For a given $M\in \mathbb{N}$ and a $\mathcal{H}$-valued random variable $Y\in L^2(\Omega; \mathcal{H})$, the Monte-Carlo estimator satisfies the convergence rate
\begin{equation}
\|E[Y] - E_M[Y] \|_{L^2(\Omega^M; \mathcal{H})} \leq \frac{1}{\sqrt{M}}\|Y\|_{L^2(\Omega;\mathcal{H})}.
\end{equation}
\end{lemma}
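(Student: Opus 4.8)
The plan is to establish the identity $\|E[Y] - E_M[Y]\|_{L^2(\Omega^M;\mathcal{H})}^2 = \tfrac1M\|Y - E[Y]\|_{L^2(\Omega;\mathcal{H})}^2$ and then bound the right-hand side by $\tfrac1M\|Y\|_{L^2(\Omega;\mathcal{H})}^2$; taking square roots gives the claim. First I would observe that each $\hat{Y}_i$ is an independent copy of $Y$, so $\mathbb{E}[\hat{Y}_i] = E[Y]$ as a Bochner integral and hence $\mathbb{E}[E_M[Y]] = E[Y]$. Therefore $E[Y] - E_M[Y] = -\tfrac1M\sum_{i=1}^M(\hat{Y}_i - E[Y])$ is a scaled sum of independent, centered $\mathcal{H}$-valued random variables. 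Expanding the squared norm via the inner product on $\mathcal{H}$ and using linearity of the expectation over $\Omega^M$ gives
\[
\|E[Y] - E_M[Y]\|_{L^2(\Omega^M;\mathcal{H})}^2 = \frac{1}{M^2}\sum_{i,j=1}^M \mathbb{E}\big[(\hat{Y}_i - E[Y],\, \hat{Y}_j - E[Y])_{\mathcal{H}}\big].
\]

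Next I would show the off-diagonal terms vanish. Since $\mathcal{H}$ is a separable Hilbert space, I fix an orthonormal basis $\{e_k\}_{k\in\mathbb{N}}$ and write each summand as $\sum_k \mathbb{E}\big[(\hat{Y}_i - E[Y], e_k)(\hat{Y}_j - E[Y], e_k)\big]$, where interchanging the Parseval series with the expectation is justified by dominated convergence using $\|\hat{Y}_i\|_{\mathcal{H}}\|\hat{Y}_j\|_{\mathcal{H}}\in L^1$. For $i\neq j$ the scalar random variables $(\hat{Y}_i - E[Y], e_k)$ and $(\hat{Y}_j - E[Y], e_k)$ are independent with zero mean, so every summand is zero; for $i = j$ the term equals $\mathbb{E}\big[\|\hat{Y}_i - E[Y]\|_{\mathcal{H}}^2\big] = \|Y - E[Y]\|_{L^2(\Omega;\mathcal{H})}^2$, since $\hat{Y}_i$ has the same law as $Y$. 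Hence the double sum collapses to $\tfrac{1}{M^2}\cdot M\cdot\|Y - E[Y]\|_{L^2(\Omega;\mathcal{H})}^2 = \tfrac1M\|Y - E[Y]\|_{L^2(\Omega;\mathcal{H})}^2$.

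Finally, the Pythagoras-type identity $\|Y - E[Y]\|_{L^2(\Omega;\mathcal{H})}^2 = \|Y\|_{L^2(\Omega;\mathcal{H})}^2 - \|E[Y]\|_{\mathcal{H}}^2 \leq \|Y\|_{L^2(\Omega;\mathcal{H})}^2$, obtained componentwise as above or directly from $\mathbb{E}[(Y - E[Y], E[Y])_{\mathcal{H}}] = 0$, completes the estimate. The only genuinely delicate point is the handling of the $\mathcal{H}$-valued (Bochner) integrals — interchanging the inner product and the Parseval series with the expectation and exploiting independence at the level of the scalar coordinates — but this is routine for a separable Hilbert space and is covered in \cite{lord2014introduction}; all remaining steps are elementary.
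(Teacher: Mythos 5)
The paper offers no proof of this lemma; it simply cites the standard reference \cite{lord2014introduction}, where the result is established by exactly the argument you give. Your computation is correct and complete: the bias-variance expansion, the vanishing of the off-diagonal terms via independence (your coordinatewise Parseval argument with dominated convergence is a valid way to justify $\mathbb{E}[(\hat{Y}_i - E[Y],\hat{Y}_j - E[Y])_{\mathcal{H}}]=0$), and the final bound $\|Y-E[Y]\|_{L^2(\Omega;\mathcal{H})}\leq \|Y\|_{L^2(\Omega;\mathcal{H})}$ by the Pythagoras identity. Nothing further is needed.
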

Therefore, if we consider the error between the mean solution $\mathbb{E}[u]$ and our discrete approximation $\mathbb{E}[u_h]$ in the $L^2(\Omega^M;H)$ norm,
and decompose it into the error arising from the finite element discretisation and the statistical error for the Monte-Carlo approximation, 
we obtain the following bound
\begin{align*}
 \| E[u] - E_M[u_h] \|_{L^2(\Omega^M; H)}
 &\leq \| E[u] - E[u_h] \|_{L^2(\Omega^M; H)} + \| E[u_h] - E_M[u_h] \|_{L^2(\Omega^M; H)}\\
 &\leq \|u -u_h \|_{L^2(\Omega; H)} + \frac{1}{\sqrt{M}} \| u_h\|_{L^2(\Omega;H)}
 \lesssim    h^{2}+ \frac{1}{\sqrt{M}}
\end{align*}
A similar argument in the $L^2(\Omega; V)$ leads to the following convergence rates.
\begin{theorem}\label{theorem:ConvergenceRates}
Let all the conditions from Theorem \ref{errorest} be satisfied. Then we have the following error estimates
\begin{align}
    \| E[u] - E_M[u_h] \|_{L^2(\Omega^M; H)} &\lesssim h^{2} + \frac{1}{\sqrt{M}}\\
        \| E[u] - E_M[u_h] \|_{L^2(\Omega^M; V)} &\lesssim h + \frac{1}{\sqrt{M}}.
        \label{MC2}
\end{align}
\end{theorem}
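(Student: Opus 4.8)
The plan is to decompose the total error into a deterministic finite-element contribution and a statistical Monte-Carlo contribution, following exactly the computation carried out immediately before the statement, and then to invoke Theorem~\ref{errorest} and Lemma~\ref{mcest}. Throughout let $\mathcal{H}$ stand for either $H$ or $V$. Since $\mathbb{E}[u]$ and $\mathbb{E}[u_h]$ are deterministic elements of $\mathcal{H}$, their $L^2(\Omega^M;\mathcal{H})$-norm coincides with their $\mathcal{H}$-norm, and the triangle inequality gives
\begin{equation*}
\| \mathbb{E}[u] - E_M[u_h] \|_{L^2(\Omega^M;\mathcal{H})} \leq \| \mathbb{E}[u] - \mathbb{E}[u_h] \|_{\mathcal{H}} + \| \mathbb{E}[u_h] - E_M[u_h] \|_{L^2(\Omega^M;\mathcal{H})}.
\end{equation*}

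For the first term I would use linearity of the expectation together with Jensen's inequality (equivalently the continuous embedding $L^2(\Omega;\mathcal{H}) \hookrightarrow L^1(\Omega;\mathcal{H})$ on the finite measure space $\Omega$) to write $\| \mathbb{E}[u] - \mathbb{E}[u_h] \|_{\mathcal{H}} = \| \mathbb{E}[u - u_h] \|_{\mathcal{H}} \leq \| u - u_h \|_{L^2(\Omega;\mathcal{H})}$, and then apply the energy and $L^2$ error estimates of Theorem~\ref{errorest}: this bounds the term by $c h^2 \|u\|_{L^2(\Omega;Z_0)}$ when $\mathcal{H} = H$ and by $c h \|u\|_{L^2(\Omega;Z_0)}$ when $\mathcal{H} = V$.

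For the second term I would apply Lemma~\ref{mcest} with the random variable $Y = u_h \in L^2(\Omega;\mathcal{H})$, which yields $\| \mathbb{E}[u_h] - E_M[u_h] \|_{L^2(\Omega^M;\mathcal{H})} \leq M^{-1/2} \| u_h \|_{L^2(\Omega;\mathcal{H})}$. It then remains to bound $\| u_h \|_{L^2(\Omega;\mathcal{H})}$ by a constant independent of $h$; this follows either from the discrete stability estimate (\ref{discretestab}) combined with the norm equivalences (L1)--(L2) of the lifting map, or more directly from $\| u_h \|_{L^2(\Omega;\mathcal{H})} \leq \| u \|_{L^2(\Omega;\mathcal{H})} + \| u - u_h \|_{L^2(\Omega;\mathcal{H})}$ together with the embeddings $Z_0 \hookrightarrow V \hookrightarrow H$ and the bound from Theorem~\ref{errorest}, so that $\| u_h \|_{L^2(\Omega;\mathcal{H})} \lesssim \|u\|_{L^2(\Omega;Z_0)}$.

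Combining the two estimates gives $\| \mathbb{E}[u] - E_M[u_h] \|_{L^2(\Omega^M;H)} \lesssim h^2 + M^{-1/2}$ and $\| \mathbb{E}[u] - E_M[u_h] \|_{L^2(\Omega^M;V)} \lesssim h + M^{-1/2}$, as claimed. There is no serious obstacle here: the argument is a routine assembly of already-proven ingredients. The only points requiring a little care are the harmless identification of the deterministic quantities $\mathbb{E}[u]$ and $\mathbb{E}[u_h]$ as constant random variables on the product space $\Omega^M$ (so that the split above is legitimate), the fact that the independence needed to obtain the $M^{-1/2}$ rate is already built into the statement of Lemma~\ref{mcest}, and the observation that in the $V$-norm the Monte-Carlo term still contributes at order $M^{-1/2}$ while the finite-element term only drops to order $h$, which is why the second estimate is not of order $h^2$.
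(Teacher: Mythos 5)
Your proposal is correct and follows essentially the same route as the paper: a triangle-inequality split into the finite element error (controlled via Jensen's inequality and Theorem \ref{errorest}) and the Monte-Carlo sampling error (controlled via Lemma \ref{mcest} together with an $h$-independent bound on $\|u_h\|_{L^2(\Omega;\mathcal{H})}$). The extra care you take in justifying the uniform bound on $\|u_h\|$ and the identification of deterministic quantities on $\Omega^M$ only makes explicit what the paper leaves implicit.
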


\section{Discretisation of the reformulated elliptic PDEs on their expected domains}
In this section, we apply the results from the abstract theory to 
two finite element discretisation schemes for the reformulations of the two model elliptic equations. 
In each case, we will verify that all the listed assumptions in abstract setting are satisfied hence giving the stated convergence rate.

\subsection{The elliptic equation on a random surface}
To discretise the reformulation of the elliptic equation
\[
 - \Delta_{\Gamma(\omega)} u(\omega) + u(\omega) = f(\omega) \quad \text{on } \Gamma(\omega)
\]
on the expected domain, we propose a semi-discrete scheme using linear Lagrangian surface finite elements \cite{dziuk2013finite}.
 Our computational domain $\Gamma_h$ approximating the smooth expected hypersurface $\Gamma_0$
 will be a polyhedral surface
$$\Gamma_h = \bigcup\limits_{T \in \mathcal{T}_h} T \subset U_{\delta}$$
consisting of finitely many non-degenerate triangles whose vertices are taken to lie on the surface $\Gamma_0$ and have the maximum diameter bounded above by $h>0$. 
The triangulation will be assumed to be shape regular and quasi-uniform, in the sense that the in-ball radius of each element
is uniformly bounded below by $ch$, for some constant $c>0$. 
In order to lift functions between the continuous and discrete surface, we shall assume that the projective mapping $a:\Gamma_h\rightarrow \Gamma_0$ decribed in (\ref{eq:aProj}) is
bijective and define the lift and inverse lift of functions $f$ and $g$ given over $\Gamma_h$ and $\Gamma_0$ respectively by
\begin{align}\label{eq:liftFunctions}
f^l(a) = f(x(a)) \quad 
g^{-l}(x) = g(a(x)) \quad \text{for } a \in \Gamma_0, x \in \Gamma_h,
\end{align}
where $x(a)$ denotes the inverse of the projection mapping $a$. 
We introduce the linear finite element space on $\Gamma_h$ 
\begin{equation}
    S_{h} = \{ \phi_h \in C^0(\Gamma_h) \, | \, \phi_h|_{T} \in \mathbb{P}_1(T), T \in \mathcal{T}_h\} 
\end{equation}
and define the lifted finite element space by
\begin{equation}
    S_{h}^l = \{ \varphi_h \in C^0(\Gamma_0)  \, | \, \varphi_h = \phi_h^l, \text{ for some } \phi_h \in S_{h}\}.
\end{equation}
The finite element discretisation of the mean-weak formulation reads as follows.
\begin{problem}[Semi-discrete scheme]
Find $U_h \in L^2(\Omega;S_{h})$ such that
\begin{equation}\label{p4}
\int_{\Omega} \int_{\Gamma_h} \mathcal{D}_{\Gamma_0}^{-l}(\omega) \nabla_{\Gamma_h} U_h(\omega) \cdot \nabla_{\Gamma_h} \phi_h(\omega) 
+  U_h(\omega) \phi_h(\omega) \sqrt{g_{\Gamma_0}^{-l}(\omega)}= \int_{\Omega} \int_{\Gamma_h} f^{-l}(\omega) \phi_h(\omega) \sqrt{g_{\Gamma_0}^{-l}(\omega)}
\end{equation}
for every $\phi_h \in L^2(\Omega; S_{h}).$
\end{problem}
In the context of the abstract framework, the finite dimensional space $\mathcal{V}_h$ is taken to be the finite element space $S_{h}$ and 
the Hilbert spaces $V_h, H_h$ are given by $H^1(\Gamma_h)$ and $L^2(\Gamma_h)$.
Furthermore, the abstract sample-dependent discrete bilinear form $\tilde{a}_h(\omega;\cdot, \cdot): H^1(\Gamma_h)\times H^1(\Gamma_h) \rightarrow \re$ and 
linear functional $\tilde{l}(\omega; \cdot): L^2(\Gamma_h) \rightarrow \re$ are given by
 \begin{align*}
  \tilde{a}_h(\omega; \chi_h, \phi_h ) &=
 \int_{\Gamma_h} \mathcal{D}_{\Gamma_0}^{-l}(\omega) \nabla_{\Gamma_h} \chi_h \cdot \nabla_{\Gamma_h} \phi_h +  \chi_h \phi_h \sqrt{g_{\Gamma_0}^{-l}(\omega)}\\
  \tilde{l}_h(\omega; \chi_h) &= \int_{\Gamma_h}  f^{-l}(\omega)\chi_h \sqrt{g_{\Gamma_0}^{-l}(\omega)}.
\end{align*}
With the uniform bounds on the random coefficients (\ref{eq:uniformBoundSurfCoef1}), (\ref{eq:uniformBoundSurfCoef2}), 
we deduce that $\tilde{a}_h(\omega:\cdot,\cdot)$ is uniformly $L^2(\Omega; H^1(\Gamma_0))$-elliptic and bounded,
and additionally $\tilde{l}(\omega; \cdot)$ is uniformly bounded as presumed in (\ref{assumption:Coer1} - \ref{assumption:Coer3}), 
and hence obtain existence and uniqueness of a semi-discrete solution to (\ref{p4}). 
We continue by checking the stated assumptions in the abstract error analysis.
 In particular, we begin with the norm equivalence (L1),(L2) of the lifting map $\Lambda_h:\mathcal{V}_h \rightarrow V$ given by $\Lambda_h \chi_h = \chi_h^l$.
 A proof of these estimates can be found in \cite[Lemma 4.2]{dziuk2013finite}.
\begin{lemma}[Equivalence in norms of lifts]\label{lifts}
There exists constants $c_1,c_2>0$ independent of $h$ such that for any $\chi_h \in S_{h}$ with lift $\chi_h^l \in S_{h}^l$ we have
\begin{align*}
    c_1 \|\chi_h\|_{L^2(\Gamma_h)} &\leq \|\chi_h^l\|_{L^2(\Gamma_0)} \leq c_2 \|\chi_h\|_{L^2(\Gamma_h)},\\
      c_1 \|\nabla_{\Gamma_h}\chi_h\|_{L^2(\Gamma_h)} &\leq \|\nabla_{\Gamma_0}\chi_h^l\|_{L^2(\Gamma_0)} \leq c_2 \|\nabla_{\Gamma_h}\chi_h\|_{L^2(\Gamma_h)}.
\end{align*}
\end{lemma}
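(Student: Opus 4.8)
The plan is to reduce both equivalences to pointwise geometric estimates on each triangle together with a single change of variables, exactly in the spirit of \cite[Lemma~4.2]{dziuk2013finite}. Fix $T\in\mathcal{T}_h$ with lift $T^l=a^{\Gamma_0}(T)\subset\Gamma_0$; by the assumed bijectivity of the projection $a^{\Gamma_0}$ the restriction $a^{\Gamma_0}|_T:T\to T^l$ is a smooth diffeomorphism. First I would collect the standard smallness estimates: since the vertices of $T$ lie on $\Gamma_0$ and the mesh is shape regular and quasi-uniform, one has $\|d^{\Gamma_0}\|_{L^\infty(\Gamma_h)}\leq c h^2$ and $\|\nu^{\Gamma_h}-\nu^{\Gamma_0}\circ a^{\Gamma_0}\|_{L^\infty(\Gamma_h)}\leq ch$ with constants independent of $h$; in particular $\nu^{\Gamma_h}(x)\cdot\nu^{\Gamma_0}(a^{\Gamma_0}(x))\geq c_0>0$ for $h\in(0,h_0)$ small. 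These are the only places the geometry of the mesh enters, and the uniform lower bound $c_0>0$ is the crucial ingredient.

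Next I would record the change of variables for the lift map. There is a Jacobian $\delta_h:\Gamma_h\to\re$ with
\[
\int_{\Gamma_0} w \, dA_{\Gamma_0} = \int_{\Gamma_h} (w\circ a^{\Gamma_0})\,\delta_h\, dA_{\Gamma_h}\qquad\text{for all }w\in L^1(\Gamma_0),
\]
and using the Fermi-coordinate formula \eqref{fermi:a} for $\nabla a^{\Gamma_0}$ one computes $\delta_h = \bigl(\nu^{\Gamma_h}\cdot(\nu^{\Gamma_0}\circ a^{\Gamma_0})\bigr)\prod_{j=1}^{n}\bigl(1 - d^{\Gamma_0}\,\kappa_j^{\Gamma_0}\bigr)$ on each $T$, so that the estimates above give $0<c_1\leq\delta_h\leq c_2$ uniformly in $x$ and $h$. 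Applying this identity with $w=|\chi_h^l|^2$ yields the first equivalence $c_1\|\chi_h\|_{L^2(\Gamma_h)}\leq\|\chi_h^l\|_{L^2(\Gamma_0)}\leq c_2\|\chi_h\|_{L^2(\Gamma_h)}$ immediately.

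For the gradient equivalence I would differentiate the identity $\chi_h=\chi_h^l\circ a^{\Gamma_0}$ on $\Gamma_h$ and insert $\nabla a^{\Gamma_0}=(I+d^{\Gamma_0}\mathcal{H}^{\Gamma_0})^{-1}\mathcal{P}_{\Gamma_0}$ from Lemma~\ref{lemma:fermi}, obtaining on each triangle a linear relation $\nabla_{\Gamma_h}\chi_h = \mathcal{P}_{\Gamma_h}(I+d^{\Gamma_0}\mathcal{H}^{\Gamma_0})^{-1}\mathcal{P}_{\Gamma_0}\,(\nabla_{\Gamma_0}\chi_h^l\circ a^{\Gamma_0})$. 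Since $\nabla_{\Gamma_0}\chi_h^l$ is tangential to $\Gamma_0$ and $\nabla_{\Gamma_h}\chi_h$ tangential to $\Gamma_h$, this map is invertible; writing the inverse as $\nabla_{\Gamma_0}\chi_h^l\circ a^{\Gamma_0}=R_h\,\nabla_{\Gamma_h}\chi_h$ for a matrix field $R_h$, the estimates of the first paragraph show that $R_h$ and $R_h^{-1}$ are bounded uniformly in $x$ and $h$ — the only possible degeneracy, the factor $1/(\nu^{\Gamma_h}\cdot\nu^{\Gamma_0})$, is controlled by $c_0$. Combining these pointwise matrix bounds with the Jacobian bounds $c_1\leq\delta_h\leq c_2$ and integrating over $\Gamma_h$ gives $c_1\|\nabla_{\Gamma_h}\chi_h\|_{L^2(\Gamma_h)}\leq\|\nabla_{\Gamma_0}\chi_h^l\|_{L^2(\Gamma_0)}\leq c_2\|\nabla_{\Gamma_h}\chi_h\|_{L^2(\Gamma_h)}$, as claimed. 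The main obstacle is purely the uniform nondegeneracy $\nu^{\Gamma_h}\cdot\nu^{\Gamma_0}\geq c_0>0$ and the $O(h^2)$ distance bound, which hinge on shape regularity and quasi-uniformity of the triangulation; everything after that is bookkeeping.
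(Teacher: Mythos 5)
Your argument is correct and is essentially the proof the paper relies on: the paper does not prove this lemma itself but cites \cite[Lemma 4.2]{dziuk2013finite}, whose proof is exactly your reduction to the measure relation $dA_{\Gamma_0}=\delta_h\,dA_{\Gamma_h}$ with $\delta_h$ uniformly bounded above and below, the bounds $\|d^{\Gamma_0}\|_{L^\infty(\Gamma_h)}\leq ch^2$ and $\nu^{\Gamma_h}\cdot(\nu^{\Gamma_0}\circ a^{\Gamma_0})\geq c_0>0$, and the chain-rule relation between $\nabla_{\Gamma_h}\chi_h$ and $\nabla_{\Gamma_0}\chi_h^l$ with uniformly invertible tangential transition matrices (the same relation the paper uses later in its consistency-error computation). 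No gaps beyond routine bookkeeping.
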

For the interpolation assumption (I1), we set the Hilbert space $Z_0$ consisting of functions of higher regularity to be $H^2(\Gamma_0)$.
It follows from the Sobolev embedding that $H^2(\Gamma_0) \subset C^0(\Gamma_0)$ for $n \leq 3$ and therefore we can introduce the interpolation operator
 $I_h : H^2(\Gamma_0) \rightarrow S_{h}^l$ defined by
\begin{equation}
 I_h \eta = \left(\hat{I}_{h} \eta^{-l}\right)^{l}
\end{equation}
where $\hat{I}_h : C^0(\Gamma_h) \rightarrow S_{h}$ denotes
the standard Lagrangian interpolatant defined element-wise on $\Gamma_h$.
The following estimate was proved in \cite[Lemma 4.3]{dziuk2013finite}. 
\begin{lemma}[Interpolation estimate]\label{interpolationest}
Given any $\eta\in H^2(\Gamma_0)$, there exists a constant $c>0$ independent of $h$ such that 
\begin{equation}
\|\eta - I_h \eta \|_{L^2(\Gamma_0)} + h \| \nabla_{\Gamma_0}( \eta - I_h \eta) \|_{L^2(\Gamma_0)} \leq c h^{2} \| \eta\|_{H^2(\Gamma_0)}.
\end{equation}
\end{lemma}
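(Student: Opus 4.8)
The plan is to transport the estimate to the polyhedral surface $\Gamma_h$, where the classical affine interpolation theory applies triangle by triangle, and then lift back to $\Gamma_0$.

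First I would collect the elementary geometric facts underpinning the lift $a:\Gamma_h\to\Gamma_0$. Since the vertices of each $T\in\mathcal{T}_h$ lie on $\Gamma_0\in C^2$ and $h$ is small, Lemma \ref{lemma:fermi} gives that $a$ restricts to a bi-Lipschitz bijection of $\Gamma_h$ onto $\Gamma_0$, that the ratio $\mu_h$ of the surface measures satisfies $\|1-\mu_h\|_{L^\infty(\Gamma_h)}\le c h^2$, and that the tangential gradients are related by $(\nabla_{\Gamma_0}v^l)\circ a = Q_h\,\nabla_{\Gamma_h}v$ for a matrix field with $\|Q_h\|_{L^\infty(\Gamma_h)}+\|Q_h^{-1}\|_{L^\infty(\Gamma_h)}\le c$, all constants depending only on $\Gamma_0$ and $\delta$. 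Because $\eta - I_h\eta = (\eta^{-l}-\hat I_h\eta^{-l})^l$, these facts yield
\begin{equation*}
\|\eta - I_h\eta\|_{L^2(\Gamma_0)} + h\,\|\nabla_{\Gamma_0}(\eta - I_h\eta)\|_{L^2(\Gamma_0)}
\le c\Big(\|\eta^{-l}-\hat I_h\eta^{-l}\|_{L^2(\Gamma_h)} + h\,\|\nabla_{\Gamma_h}(\eta^{-l}-\hat I_h\eta^{-l})\|_{L^2(\Gamma_h)}\Big),
\end{equation*}
so it is enough to bound the right-hand side by $c h^2\|\eta\|_{H^2(\Gamma_0)}$.

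On a single flat triangle $T\in\mathcal{T}_h$ I would invoke the standard scaling/Bramble--Hilbert estimate for the affine Lagrange interpolant, namely $\|v-\hat I_T v\|_{L^2(T)} + h_T\|\nabla(v-\hat I_T v)\|_{L^2(T)}\le c\,h_T^2\,|v|_{H^2(T)}$ for $v\in H^2(T)$, with $c$ depending only on the shape-regularity constant; quasi-uniformity gives $h_T\le h$. Applying this with $v=\eta^{-l}|_T$, squaring and summing over the elements produces
\begin{equation*}
\|\eta^{-l}-\hat I_h\eta^{-l}\|_{L^2(\Gamma_h)}^2 + h^2\,\|\nabla_{\Gamma_h}(\eta^{-l}-\hat I_h\eta^{-l})\|_{L^2(\Gamma_h)}^2 \le c\,h^4\sum_{T\in\mathcal{T}_h}|\eta^{-l}|_{H^2(T)}^2.
\end{equation*}

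The main obstacle is the remaining step: showing $\sum_{T}|\eta^{-l}|_{H^2(T)}^2\le c\|\eta\|_{H^2(\Gamma_0)}^2$ uniformly in $h$. Writing $\eta^{-l}=\eta\circ a$ on $T$ and differentiating, the second derivatives of $\eta^{-l}$ involve $D^2\eta$ contracted with two copies of $\nabla a$, plus $\nabla\eta$ contracted with the second derivatives of $a$; by Lemma \ref{lemma:fermi}, $\nabla a^{\Gamma_0}=(I+d^{\Gamma_0}\mathcal{H}^{\Gamma_0})^{-1}\mathcal{P}_{\Gamma_0}$ is uniformly bounded on $U_\delta$, and the second-order contributions are controlled by replacing flat Hessians on $T$ by intrinsic surface Hessians on $\Gamma_0$ up to bounded geometric terms built from the (uniformly bounded) extended Weingarten map $\mathcal{H}^{\Gamma_0}$ and the $O(h^2)$-small distance $d^{\Gamma_0}$. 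Summing over $T$ and using that $a$ is a bi-Lipschitz bijection then closes the estimate. This geometric bookkeeping is precisely what is carried out in \cite[Lemma 4.3]{dziuk2013finite}, to which I would defer for the full details.
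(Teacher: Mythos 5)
Your proposal is correct and is essentially the paper's route: the paper does not prove this lemma itself but cites \cite[Lemma 4.3]{dziuk2013finite}, whose argument is precisely the one you sketch (transport to $\Gamma_h$ via the closest-point lift and its norm equivalences, elementwise Bramble--Hilbert for the affine Lagrange interpolant, and control of $\sum_T|\eta^{-l}|_{H^2(T)}^2$ by $\|\eta\|_{H^2(\Gamma_0)}^2$ using the uniform bounds on $\nabla a^{\Gamma_0}$, $\mathcal{H}^{\Gamma_0}$ and $|d^{\Gamma_0}|\le ch^2$). Deferring the geometric bookkeeping to that reference is exactly what the paper does, so there is no gap.
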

To derive the assumed bounds (P1),(P2) and (P3) on the approximation of the discrete bilinear forms, 
we first need a preliminary result on the order of approximation of the geometry, see \cite[Lemma 4.1]{dziuk2013finite}.
\begin{lemma}[Geometric error bounds]\label{geometricerror}
Let $\delta_{h}^{\Gamma_0}$ denote the surface element corresponding to the transformation from $\Gamma_0$ to $\Gamma_h$ under the lifting map 
$d\sigma(a(x)) = \delta_{h}(x) d\sigma_{h}(x)$ and define
\begin{equation}\label{R_h}
 R_{h}^{\Gamma_0}(\omega) = \frac{1}{\delta_{h}^{\Gamma_0}} \left(\mathcal{D}_{\Gamma_0}^{-l}(\omega)\right)^{-1}\mathcal{P}_{\Gamma_0} 
 (I - d^{\Gamma_0} \mathcal{H}^{\Gamma_0}) \mathcal{P}_{h} \mathcal{D}_{\Gamma_0}^{-l}(\omega) \mathcal{P}_{h} (I - d^{\Gamma_0} \mathcal{H}^{\Gamma_0}),  
\end{equation}
where $\mathcal{P}_{h}:= I - \nu_{h}\otimes\nu_{h}$ is the projection operator mapping onto the tangent space of the discrete surface $\Gamma_h$
defined element-wise.
Then we have the estimates
\begin{align}
\label{eq:surfGeoEstD}
 \|d^{\Gamma_0}\|_{L^{\infty}(\Gamma_h)} &\leq c h^{2}\\
 \label{eq:surfGeoEstJ}
 \|1 - \delta_{h}^{\Gamma_0} \|_{L^{\infty}(\Gamma_h)} &\leq c h^{2}\\
 \label{eq:surfGeoEstR}
\|(I- R_{h}^{\Gamma_0}(\omega)) \mathcal{P}_{\Gamma_0}\|_{L^{\infty}(\Gamma_h)} &\leq ch^{2}.    
\end{align}
\end{lemma}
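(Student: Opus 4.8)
Here is a plan for proving the geometric error bounds.

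The plan is to treat the three estimates separately, exploiting the fact that (\ref{eq:surfGeoEstD}) and (\ref{eq:surfGeoEstJ}) are purely geometric — they do not involve the random coefficient $\mathcal{D}_{\Gamma_0}^{-l}(\omega)$ at all — so I would reproduce verbatim the deterministic surface finite element estimates of \cite{dziuk2013finite}. Only (\ref{eq:surfGeoEstR}) sees the random matrix, and there the real issue will be to carry the expansion through while keeping every constant independent of $\omega$; this is the step I expect to be the main obstacle.

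For (\ref{eq:surfGeoEstD}) I would argue element-wise. On a triangle $T\in\mathcal{T}_h$ whose vertices lie on $\Gamma_0$, the affine Lagrange interpolant of $d^{\Gamma_0}$ on $T$ vanishes identically, because $d^{\Gamma_0}$ is zero at the vertices; hence $\|d^{\Gamma_0}\|_{L^\infty(T)}$ equals the interpolation error of $d^{\Gamma_0}$ on $T$. By Lemma \ref{lemma:fermi}, $d^{\Gamma_0}\in C^2(U_\delta)$ with second derivatives bounded by the curvature of the compact $C^2$-surface $\Gamma_0$, so the standard interpolation estimate on a shape-regular simplex of diameter at most $h$ gives $\|d^{\Gamma_0}\|_{L^\infty(T)}\lesssim h^2$; taking the maximum over $\mathcal{T}_h$ yields (\ref{eq:surfGeoEstD}). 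The same interpolation argument applied to $\nu^{\Gamma_0}$, using shape regularity to bound the in-ball radius from below by $ch$, produces the auxiliary estimate $\|\nu^{\Gamma_0}-\nu_h\|_{L^\infty(\Gamma_h)}\lesssim h$, which I will use repeatedly afterwards.

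For (\ref{eq:surfGeoEstJ}) I would start from the factorisation of the surface element of the normal projection $a^{\Gamma_0}:\Gamma_h\to\Gamma_0$,
\[
\delta_h^{\Gamma_0}(x)=\big(\nu^{\Gamma_0}(a^{\Gamma_0}(x))\cdot\nu_h(x)\big)\prod_{j=1}^n\big(1-d^{\Gamma_0}(x)\,\kappa_j^{\Gamma_0}(a^{\Gamma_0}(x))\big),
\]
which is derived from Lemma \ref{lemma:fermi} exactly as in \cite{dziuk2013finite} and is the discrete counterpart of the area relation $dA_\Gamma=\sqrt{g_{\Gamma_0}}\,dA_{\Gamma_0}$ recorded earlier (compare the product appearing in Lemma \ref{lemma:graphCoef}). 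Each factor $1-d^{\Gamma_0}\kappa_j^{\Gamma_0}$ is then $1+O(h^2)$ by (\ref{eq:surfGeoEstD}) and the boundedness of the principal curvatures, while $\nu^{\Gamma_0}\cdot\nu_h=1-\tfrac12|\nu^{\Gamma_0}-\nu_h|^2=1+O(h^2)$ by the auxiliary normal estimate; multiplying these, $|1-\delta_h^{\Gamma_0}|\lesssim h^2$ on $\Gamma_h$.

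The main work is (\ref{eq:surfGeoEstR}). Writing $M(\omega):=(\mathcal{D}_{\Gamma_0}^{-l}(\omega))^{-1}$ and abbreviating $\mathcal{P}=\mathcal{P}_{\Gamma_0}$, I would first note that Proposition \ref{Prop:BoundsSurfCoef} (whose spectral bounds are preserved under the lift, the Jacobian of $a^{\Gamma_0}$ being bounded above and below) makes $\mathcal{D}_{\Gamma_0}^{-l}(\omega)$ and $M(\omega)$ bounded in $L^\infty(\Gamma_h)$ uniformly in $\omega$; this is precisely what will force the final constant to be $\omega$-independent. Substituting $\delta_h^{\Gamma_0}=1+O(h^2)$ and $I-d^{\Gamma_0}\mathcal{H}^{\Gamma_0}=I+O(h^2)$ into the definition (\ref{R_h}) and using that the bounded factors $M(\omega)$, $\mathcal{D}_{\Gamma_0}^{-l}(\omega)$, $\mathcal{P}_h$ absorb these corrections, I reduce (\ref{eq:surfGeoEstR}) to proving $M(\omega)\,\mathcal{P}\,\mathcal{P}_h\,\mathcal{D}_{\Gamma_0}^{-l}(\omega)\,\mathcal{P}_h\,\mathcal{P}=\mathcal{P}+O(h^2)$ in $L^\infty(\Gamma_h)$, uniformly in $\omega$. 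Two structural facts drive the cancellation. First, by construction $G_{\Gamma_0}=\nabla_{\Gamma_0}\phi^\top\nabla_{\Gamma_0}\phi+\nu^{\Gamma_0}\otimes\nu^{\Gamma_0}$ maps $T\Gamma_0$ into $T\Gamma_0$ and fixes $\mathrm{span}\{\nu^{\Gamma_0}\}$, so $\mathcal{D}_{\Gamma_0}=\sqrt{g_{\Gamma_0}}G_{\Gamma_0}^{-1}$, its lift, and $M(\omega)$ all commute with $\mathcal{P}$, whence the principal term equals $M(\omega)\,\mathcal{D}_{\Gamma_0}^{-l}(\omega)\,\mathcal{P}=\mathcal{P}$. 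Second, from $\mathcal{P}_h=I-\nu_h\otimes\nu_h$ one gets $\mathcal{P}\mathcal{P}_h=\mathcal{P}-(\mathcal{P}\nu_h)\otimes\nu_h$ and $\mathcal{P}_h\mathcal{P}=\mathcal{P}-\nu_h\otimes(\mathcal{P}\nu_h)$, with $\mathcal{P}\nu_h=\mathcal{P}(\nu_h-\nu^{\Gamma_0})=O(h)$, and moreover $\mathcal{P}\,\mathcal{D}_{\Gamma_0}^{-l}(\omega)\,\nu_h=\mathcal{P}\,\mathcal{D}_{\Gamma_0}^{-l}(\omega)(\nu_h-\nu^{\Gamma_0})=O(h)$ because $\mathcal{D}_{\Gamma_0}^{-l}$ again fixes the normal line. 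Substituting these expansions, the term $\mathcal{P}$ survives and each remaining term carries two powers of $\nu_h-\nu^{\Gamma_0}$ — either as $\mathcal{P}\nu_h$ produced by both copies of $\mathcal{P}_h$, or once as such a factor and once through the identity $\mathcal{P}\,\mathcal{D}_{\Gamma_0}^{-l}(\omega)\,\nu_h=\mathcal{P}\,\mathcal{D}_{\Gamma_0}^{-l}(\omega)(\nu_h-\nu^{\Gamma_0})$ — so is $O(h^2)$ by the auxiliary normal estimate. I would emphasise that the genuinely new difficulty compared with \cite{dziuk2013finite} is the bookkeeping of $\omega$-uniformity across this expansion, which is resolved by combining the block commutation $[\mathcal{D}_{\Gamma_0},\mathcal{P}_{\Gamma_0}]=0$ with the uniform spectral bounds of Proposition \ref{Prop:BoundsSurfCoef}.
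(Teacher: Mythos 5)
Your proposal is correct, but it does considerably more than the paper, which offers no proof of this lemma at all: it simply cites \cite[Lemma 4.1]{dziuk2013finite} for the deterministic surface finite element estimates. The two purely geometric bounds (\ref{eq:surfGeoEstD}) and (\ref{eq:surfGeoEstJ}) are indeed verbatim from that reference, exactly as you say. The genuinely new point — that (\ref{eq:surfGeoEstR}) holds with the sample-dependent matrix $\mathcal{D}_{\Gamma_0}^{-l}(\omega)$ sandwiched inside $R_h^{\Gamma_0}(\omega)$, with a constant independent of $\omega$ — is left implicit in the paper for the surface case (the paper only spells out the analogous argument in the bulk setting, where it proves (\ref{eq:geoBulkEst3}) by expanding $R_h^{D_0}(\omega)-I$ in terms of $\nabla G_h-I$ and $\delta_h^{D_0}-1$ and invoking the uniform spectral bounds on the coefficient). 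Your expansion supplies exactly this missing surface argument, and the algebra checks out: the reduction via $\delta_h^{\Gamma_0}=1+O(h^2)$ and $I-d^{\Gamma_0}\mathcal{H}^{\Gamma_0}=I+O(h^2)$, the commutation $[\mathcal{D}_{\Gamma_0},\mathcal{P}_{\Gamma_0}]=0$ (a consequence of $G_{\Gamma_0}$ preserving $T\Gamma_0$ and fixing $\mathrm{span}\{\nu^{\Gamma_0}\}$), the identity $\mathcal{P}_{\Gamma_0}\mathcal{D}_{\Gamma_0}^{-l}(\omega)\nu_h=\mathcal{P}_{\Gamma_0}\mathcal{D}_{\Gamma_0}^{-l}(\omega)(\nu_h-\nu^{\Gamma_0})=O(h)$, and the observation that every non-leading term carries two $O(h)$ factors together give $(I-R_h^{\Gamma_0}(\omega))\mathcal{P}_{\Gamma_0}=O(h^2)$ uniformly in $\omega$, with $\omega$-independence coming from Proposition \ref{Prop:BoundsSurfCoef}. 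One minor imprecision: the auxiliary bound $\|\nu^{\Gamma_0}-\nu_h\|_{L^\infty(\Gamma_h)}\lesssim h$ is not obtained by "interpolating $\nu^{\Gamma_0}$" ($\nu_h$ is not an interpolant of $\nu^{\Gamma_0}$); the standard route is to apply the $W^{1,\infty}$ interpolation estimate to $d^{\Gamma_0}$ on each element, which bounds $\|\nabla_{\Gamma_h}d^{\Gamma_0}\|_{L^\infty(T)}=\|\mathcal{P}_h\nabla d^{\Gamma_0}\|_{L^\infty(T)}\lesssim h$ and hence the angle between the normals. With that correction your argument is a complete and self-contained proof, arguably more explicit than the paper's citation-only treatment.
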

We can now bound the consistency error as follows. 
\begin{lemma}[Consistency error]
Given any $(W_h, \phi_h) \in L^2(\Omega; S_h) \times L^2(\Omega; S_h)$ with lifts \newline $(w_h, \varphi_h) \in L^2(\Omega;S_h^l)\times L^2(\Omega; S_h^l)$, 
we have 
\begin{align}\label{eq:ConsistLSurf}
|l(\varphi_h) - l_h(\phi_h) | &\leq c h^2 \|\varphi_h\|_{L^2(\Omega;L^2(\Gamma_0))}\\
\label{a_bound}
|a(w_h, \varphi_h ) - a_h(W_h, \phi_h ) | &\leq c h^{2} \| w_h\|_{L^2(\Omega;H^1(\Gamma_0))} \|\varphi_h\|_{L^2(\Omega;H^1(\Gamma_0))}.
\end{align}
\end{lemma}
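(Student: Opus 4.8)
The plan is to transport every integral appearing in the continuous forms $l$ and $a$ from the smooth surface $\Gamma_0$ onto the polyhedral surface $\Gamma_h$ via the inverse lift and the associated change of variables $a\colon\Gamma_h\to\Gamma_0$, for which $d\sigma(a(x)) = \delta_h^{\Gamma_0}(x)\,d\sigma_h(x)$, and then to compare termwise with $l_h$ and $a_h$. In each term the discrepancy localises to one of the geometric perturbation quantities controlled in Lemma~\ref{geometricerror} --- namely $\delta_h^{\Gamma_0}-1$, $d^{\Gamma_0}$, and $I-R_h^{\Gamma_0}$ --- all of which are $O(h^2)$ in $L^\infty(\Gamma_h)$, uniformly in $\omega$ since the coefficient bounds of Proposition~\ref{Prop:BoundsSurfCoef} are uniform in $\omega$ by \eqref{assump:1}. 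The residual work is then Cauchy--Schwarz over $\Omega\times\Gamma_h$ together with the norm equivalence of lifts (Lemma~\ref{lifts}, extended pathwise to the $L^2(\Omega;\cdot)$ norms) to convert $\Gamma_h$-norms back into the asserted $\Gamma_0$-norms.

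For \eqref{eq:ConsistLSurf}, I would note that $\varphi_h = \phi_h^l$ gives $\varphi_h^{-l}=\phi_h$ and that by definition of the inverse lift $(\sqrt{g_{\Gamma_0}})^{-l}=\sqrt{g_{\Gamma_0}^{-l}}$. Applying the change of variables to the $\Gamma_0$-integral defining $l(\varphi_h)$ and subtracting $l_h(\phi_h)$ yields
\[
l(\varphi_h) - l_h(\phi_h) = \int_\Omega\int_{\Gamma_h} f^{-l}(\omega)\,\phi_h(\omega)\,\sqrt{g_{\Gamma_0}^{-l}(\omega)}\,\bigl(\delta_h^{\Gamma_0}-1\bigr)\,d\sigma_h\,d\mathbb{P}(\omega),
\]
where $f^{-l}$ is the inverse lift of the (fixed) datum $\hat f$. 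Bounding by $\|\delta_h^{\Gamma_0}-1\|_{L^\infty(\Gamma_h)}\le ch^2$ from \eqref{eq:surfGeoEstJ}, the uniform bound \eqref{eq:uniformBoundSurfCoef2} on $\sqrt{g_{\Gamma_0}}$, Cauchy--Schwarz, Lemma~\ref{lifts}, and finally absorbing the fixed data norm $\|\hat f\|_{L^2(\Omega;L^2(\Gamma_0))}$ into the constant gives \eqref{eq:ConsistLSurf}.

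For \eqref{a_bound} I would split $a$ into its zeroth-order and gradient parts. The zeroth-order part $\int_\Omega\int_{\Gamma_0} w_h\varphi_h\sqrt{g_{\Gamma_0}}$ is handled exactly as the linear functional: the change of variables pairs it against the corresponding term of $a_h$ with a factor $\delta_h^{\Gamma_0}-1$, and \eqref{eq:surfGeoEstJ}, \eqref{eq:uniformBoundSurfCoef2}, Cauchy--Schwarz and Lemma~\ref{lifts} give a bound $ch^2\|w_h\|_{L^2(\Omega;L^2(\Gamma_0))}\|\varphi_h\|_{L^2(\Omega;L^2(\Gamma_0))}$. For the gradient part I would invoke the standard relation between tangential gradients on $\Gamma_0$ and on $\Gamma_h$ under the lift from \cite[Lemma~4.1]{dziuk2013finite}, which, combined with the change of variables, rewrites $\int_{\Gamma_0}\mathcal{D}_{\Gamma_0}\nabla_{\Gamma_0}w_h\cdot\nabla_{\Gamma_0}\varphi_h$ precisely as $\int_{\Gamma_h}\mathcal{D}_{\Gamma_0}^{-l}R_h^{\Gamma_0}\nabla_{\Gamma_h}W_h\cdot\nabla_{\Gamma_h}\phi_h$ --- this is exactly the role of the matrix $R_h^{\Gamma_0}$ defined in \eqref{R_h}. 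Subtracting the gradient term of $a_h$ leaves
\[
\int_\Omega\int_{\Gamma_h} \mathcal{D}_{\Gamma_0}^{-l}(\omega)\bigl(R_h^{\Gamma_0}(\omega)-I\bigr)\nabla_{\Gamma_h}W_h(\omega)\cdot\nabla_{\Gamma_h}\phi_h(\omega)\,d\sigma_h\,d\mathbb{P}(\omega),
\]
and since $\nabla_{\Gamma_h}\phi_h=\mathcal{P}_h\nabla_{\Gamma_h}\phi_h$ is tangential to $\Gamma_h$ (so that, up to the same $O(h^2)$ error, the projection $\mathcal{P}_{\Gamma_0}$ may be inserted), \eqref{eq:surfGeoEstR} controls the middle factor by $ch^2$ uniformly in $\omega$; together with the uniform ellipticity bound \eqref{eq:uniformBoundSurfCoef1} on $\mathcal{D}_{\Gamma_0}^{-l}$, Cauchy--Schwarz and Lemma~\ref{lifts} this yields $ch^2\|w_h\|_{L^2(\Omega;H^1(\Gamma_0))}\|\varphi_h\|_{L^2(\Omega;H^1(\Gamma_0))}$. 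Adding the two contributions and using $\|\cdot\|_{L^2(\Gamma_0)}\le\|\cdot\|_{H^1(\Gamma_0)}$ gives \eqref{a_bound}. The only genuinely delicate step is this last one: one must push the gradient-lift identity of \cite{dziuk2013finite} through carefully enough to recognise that the coefficient emerging on $\Gamma_h$ is exactly $\mathcal{D}_{\Gamma_0}^{-l}R_h^{\Gamma_0}$ with $R_h^{\Gamma_0}$ as in \eqref{R_h}, and to keep the bookkeeping of $\mathcal{P}_h$ versus $\mathcal{P}_{\Gamma_0}$ straight so that \eqref{eq:surfGeoEstR} applies; the rest is routine.
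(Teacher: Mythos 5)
Your treatment of the linear functional and of the zeroth--order part of $a$ is fine and coincides with the paper's argument up to the (immaterial) choice of pushing the integrals to $\Gamma_h$ rather than pulling $l_h$, $a_h$ up to $\Gamma_0$: only the scalar factor $\delta_h^{\Gamma_0}$ enters there, and (\ref{eq:surfGeoEstJ}), the coefficient bounds and Lemma \ref{lifts} close those terms. The gap is in the gradient term. The matrix $R_h^{\Gamma_0}$ of (\ref{R_h}) is built so that the \emph{discrete} Dirichlet form transforms to the smooth surface: using $\nabla_{\Gamma_h}W_h=\mathcal{P}_h(I-d^{\Gamma_0}\mathcal{H}^{\Gamma_0})\mathcal{P}_{\Gamma_0}\nabla_{\Gamma_0}w_h\circ a$ and $\delta_h^{\Gamma_0}d\sigma_h=d\sigma$ one gets
\begin{equation*}
\int_{\Gamma_h}\mathcal{D}_{\Gamma_0}^{-l}(\omega)\nabla_{\Gamma_h}W_h\cdot\nabla_{\Gamma_h}\phi_h\,d\sigma_h=\int_{\Gamma_0}\mathcal{D}_{\Gamma_0}(\omega)R_h^{\Gamma_0,l}(\omega)\nabla_{\Gamma_0}w_h\cdot\nabla_{\Gamma_0}\varphi_h\,d\sigma ,
\end{equation*}
which is the paper's identity. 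Your claim that the \emph{continuous} form rewrites exactly as $\int_{\Gamma_h}\mathcal{D}_{\Gamma_0}^{-l}R_h^{\Gamma_0}\nabla_{\Gamma_h}W_h\cdot\nabla_{\Gamma_h}\phi_h\,d\sigma_h$ reverses this role and is false: transporting $\int_{\Gamma_0}\mathcal{D}_{\Gamma_0}\nabla_{\Gamma_0}w_h\cdot\nabla_{\Gamma_0}\varphi_h$ to $\Gamma_h$ produces the quadratic form built from the \emph{inverse} of the map $\mathcal{P}_h(I-d^{\Gamma_0}\mathcal{H}^{\Gamma_0}):T\Gamma_0\to T\Gamma_h$, not from that map itself, so your subsequent difference $\int_{\Gamma_h}\mathcal{D}_{\Gamma_0}^{-l}(R_h^{\Gamma_0}-I)\nabla_{\Gamma_h}W_h\cdot\nabla_{\Gamma_h}\phi_h$ is not $a-a_h$ (gradient part) and carries an unquantified residual.

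The second weak point is the bookkeeping you yourself flag: you argue that $\mathcal{P}_{\Gamma_0}$ may be inserted in place of $\mathcal{P}_h$ ``up to the same $O(h^2)$ error''. That is not so: $\|\mathcal{P}_h-\mathcal{P}_{\Gamma_0}\|_{L^\infty}=O(h)$ only (the normals differ at first order), while $I-R_h^{\Gamma_0}$ is $O(1)$ as a full matrix (it is small only after composition with $\mathcal{P}_{\Gamma_0}$, which is exactly what (\ref{eq:surfGeoEstR}) says). Recovering $O(h^2)$ along your route is possible but requires the quadratic--form cancellations $\nu^{\Gamma_0}\cdot\nabla_{\Gamma_h}W_h=(\nu^{\Gamma_0}-\nu_h)\cdot\nabla_{\Gamma_h}W_h=O(h)|\nabla_{\Gamma_h}W_h|$ together with $\mathcal{D}_{\Gamma_0}\nu^{\Gamma_0}=\sqrt{g_{\Gamma_0}}\,\nu^{\Gamma_0}$ (so that both slots contribute a factor $h$), none of which appears in your write-up. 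The paper's direction avoids all of this: once the discrete form is written over $\Gamma_0$, the error matrix $\mathcal{P}_{\Gamma_0}-R_h^{\Gamma_0,l}$ acts on genuinely tangential $\Gamma_0$-gradients, so (\ref{eq:surfGeoEstR}) applies verbatim and, with (\ref{eq:surfGeoEstJ}), (\ref{eq:uniformBoundSurfCoef1}), (\ref{eq:uniformBoundSurfCoef2}), Cauchy--Schwarz and Lemma \ref{lifts}, yields (\ref{a_bound}) directly. I recommend you redo the gradient term in that direction (or, if you keep your direction, derive and estimate the correct transported coefficient and supply the cancellation argument above).
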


\begin{proof}
Lifting the discrete integral in the linear functional $l_h(\cdot)$ onto the smooth surface $\Gamma_0$ with the projective mapping $a(\cdot)$ leads to 
\begin{align*}
    l(\varphi_h) - l_h(\phi_h)
    = 
    \int_{\Omega}\int_{\Gamma_0} \left( 1- \frac{1}{\delta_h^{\Gamma_0}}\right) f(\omega) \varphi_h(\omega) \sqrt{g_{\Gamma_0}(\omega)}.
\end{align*}
Hence with the uniform bound (\ref{eq:uniformBoundSurfCoef2}) on the random coefficient $\sqrt{g_{\Gamma_0}(\omega)}$ and the order $h^2$ approximation of 
the geometric pertubation (\ref{eq:surfGeoEstJ}), we obtain the estimate (\ref{eq:ConsistLSurf}).  
For (\ref{a_bound}), we begin by applying the chain rule to lift $W_h(\omega,x) = w_h(\omega, a(x))$
\begin{align*}
    \nabla_{\Gamma_h} W_h(\omega, x) = \mathcal{P}_{h}(x) (I - d^{\Gamma_0}(x) \mathcal{H}(x) )\mathcal{P}_{\Gamma_0}(x) \nabla_{\Gamma_0}w_h(\omega, a(x)).
\end{align*}
Suppressing the parameter $x$, we deduce
\begin{align*}
\mathcal{D}_{\Gamma_0}^{-l}(\omega) \nabla_{\Gamma_h}W_h(\omega) \cdot \nabla_{\Gamma_h} \phi_h(\omega) 
&= 
\mathcal{D}_{\Gamma_0}^{-l}(\omega) \mathcal{P}_{h}(I - d^{\Gamma_0} \mathcal{H}) \mathcal{P}_{\Gamma_0} \nabla_{\Gamma_0} w_h(\omega, a) \cdot 
\mathcal{P}_{h}(I - d^{\Gamma_0} \mathcal{H}) \mathcal{P}_{\Gamma_0} \nabla_{\Gamma_0} \varphi_h(\omega,a)\\
&= \mathcal{P}_{\Gamma_0} (I - d^{\Gamma_0} \mathcal{H}) \mathcal{P}_{h} \mathcal{D}_{\Gamma_0}^{-l}(\omega)
\mathcal{P}_{h}(I- d^{\Gamma_0}\mathcal{H}) \mathcal{P}_{\Gamma_0} \nabla_{\Gamma_0} w_h(\omega, a) \cdot \nabla_{\Gamma_0}\varphi_h(\omega,a)\\
&= \delta_{h}^{\Gamma_0} \mathcal{D}_{\Gamma_0}^{-l}(\omega) R_{h}^{\Gamma_0}(\omega)\nabla_{\Gamma_0}w_h(\omega) \cdot \nabla_{\Gamma_0} \varphi_h(\omega).
\end{align*}
Therefore, we can express the pertubation error in the approximation of the bilinear form $a(\cdot,\cdot)$ by
\begin{align*}
   a(w_h, \varphi_h) - a_h(W_h, \phi_h)  &= \int_{\Omega} \int_{\Gamma_0} \mathcal{D}_{\Gamma_0}(\omega) \left( \mathcal{P}_{\Gamma_0} - R_h^{\Gamma_0,l}(\omega) \right)
    \nabla_{\Gamma_0} w_h(\omega) \cdot \nabla_{\Gamma_0} \varphi_h(\omega) \\
    &+
    \int_{\Omega} \int_{\Gamma_0} \left( 1 - \frac{1}{\delta_h^{\Gamma_0,l}}\right) w_h(\omega) \varphi(\omega) \sqrt{g_{\Gamma_0}(\omega)}
\end{align*}
and hence with the uniform bounds (\ref{eq:uniformBoundSurfCoef1}), (\ref{eq:uniformBoundSurfCoef2}) on the random coefficients 
and the geometric estimates  (\ref{eq:surfGeoEstJ}), (\ref{eq:surfGeoEstR}) we obtain (\ref{a_bound}). 
\end{proof}
For the regularity assumption (R1) on the associated dual problem
\[
 a(\varphi, w(g) ) = (g, \varphi)_{L^2(\Omega;L^2(\Gamma_0))} \quad \text{for all } \varphi \in L^2(\Omega; H^1(\Gamma_0)),
\]
which due the symmetry of $\mathcal{D}_{\Gamma_0}$ and thus of $a(\cdot,\cdot)$,
is precisely the mean-weak formulation, we have the results presented in Theorem \ref{theorem:RegularitySurfMean}.

\subsection{The coupled elliptic system}
We next apply the results from the abstract framework to the second model problem of the coupled 
elliptic system 
\begin{align*}
 -\Delta u(\omega) + u(\omega) &= f(\omega) \quad \text{in } D(\omega)\\
 \alpha u(\omega) - \beta v(\omega) + \frac{\partial u}{\partial \nu_{\Gamma}}(\omega) &= 0 \quad \text{on } \Gamma(\omega)\\
 -\Delta_{\Gamma} v(\omega) + v(\omega) +  \frac{\partial u}{\partial \nu_{\Gamma}}(\omega) &= f_{\Gamma}(\omega) \quad \text{on } \Gamma(\omega)
\end{align*}
on a random bulk-surface. 
Our proposed finite element discretisation of the system reformulated on the expected domain and the subsequent analysis will be based on
the approach presented in \cite{elliott2012finite}.
For the computational domain, we approximate the open bulk $D_0\subset \re^{n+1}$ by a polyhedral domain
\[
 D_h = \bigcup\limits_{K \in \mathcal{T}_h} K
\]
consisting of closed $(n+1)-$simplices with maximum diameter uniformly bounded above by positive constant $h>0$ and will assume that the triangulation
$\mathcal{T}_h$ is quasi-uniform.
We denote the induced discrete surface $\Gamma_h = \partial D_h$ and the associated triangulation by
\[
 \Gamma_h = \bigcup\limits_{T \in \mathcal{T}_h} T
\]
and impose the same assumptions on $\mathcal{T}_h$ as were listed in the previous example.
A piece-wise diffeomorphic mapping $G_h: D_h\rightarrow D_0$
from the discrete bulk to the continuous can be constructed 
by fixing the interior simplices (simplices with at most one vertex on the boundary $\Gamma_0$)
and using the projective mapping $a^{\Gamma_0}(\cdot)$ to 
define a diffeomorphism 
$\Lambda_{h,k}:K \rightarrow K^e$
between the boundary simplices $K$ (simplices with at least two vertices on $\Gamma_0$) and the exact 
curved simplices $K^e$, 
\begin{equation}
 G_h|_K =
 \begin{cases}
  \Lambda_{h,K} \quad K \text{ boundary simplex}\\
  id|_{K} \quad K \text{ interior simplex.}
 \end{cases}
\end{equation}
Details on the precise form of $\Lambda_{h,K}$ can be found in \cite{elliott2012finite}. 
We are therefore able to define lifts and inverse lifts of functions on the bulk domain 
by
\begin{align}
\label{lift:bulka}
 \varphi_h^l(x) &= \varphi_h( G_h^{-1}(x)) \quad x \in D_0\\
 \varphi^{-l}(x) &= \varphi( G_h(x)) \quad x \in D_h.
\end{align}
Note that, the diffeomorphism $\Lambda_{h,K}$ is chosen such that the mapping $G_h$ coincides with 
the projective mapping 
\begin{equation}\label{eq:G_h}
 G_h(x) = a^{\Gamma_0}(x) \quad x \in \partial D_h
\end{equation}
on the boundary of the discrete bulk and hence the bulk lift agrees with the surface lifting map described 
in (\ref{eq:liftFunctions}) on $\partial D_h$.
For convenience, we will denote the sub-triangulation consisting of all boundary simplices by 
$$\mathcal{B}_h = \{ K \in \mathcal{T}_h \, | \, K \text{ is a boundary simplex} \}$$
 and define the corresponding sets 
\begin{equation}
\label{eq:Bdef}
 B_h 
 = \bigcup\limits_{K \in\mathcal{B}_h}K
 \quad
 B_h^l = \bigcup\limits_{K \in B_h} K^e
\end{equation}
where the lifting maps $G_h, G_h^{-1}$ differ from the identity mapping.
We introduce the linear finite element spaces on the discrete bulk and discrete surface by 
\begin{align}
 V_h &= \{ \phi_h \in C^0(D_h) \, | \, \phi_h|_{K} \in P^1(K)\text{ for all } K \in \mathcal{T}_h\}\\
 S_h &= \{ \zeta_h \in C^0(\Gamma_h) \, | \, {\zeta_h}|_{T} \in P^1(T) \text{ for all } T \in \mathcal{\check{T}}_h \} 
\end{align}
and denote the corresponding lifted finite element spaces by
\begin{equation}
 V_h^l = \{ \varphi_h = \phi_h^l \, | \, \phi_h \in V_h\} \quad
 S_h^l = \{ \xi_h = \zeta_h^l \, | \, \zeta_h \in S_h\}.
\end{equation}
An important feature of our finite element spaces is that the trace of a function $\phi_h \in V_h$ belongs to $S_h$ 
and similarly the trace of $\varphi_h \in V_h^l$ belongs to $S_h^l$ as a result of (\ref{eq:G_h}).
The finite element discretisation of the mean-weak formulation then reads as follows.
\begin{problem}[Semi-discrete problem]\label{Semi-discrete problem} 
Find a pair $(U_h, V_h) \in L^2(\Omega; V_h \times S_h)$ such that
\begin{align*}
 \alpha \int_{\Omega}\int_{D_h} \mathcal{D}^{-l}(\omega) \nabla U_h(\omega) &\cdot \nabla \phi_h(\omega) + U_h(\omega) \phi_h(\omega) \sqrt{g^{-l}(\omega)}\\
 + \beta \int_{\Omega} \int_{\Gamma_h} \mathcal{D}_{\Gamma_0}^{-l}(\omega) \nabla_{\Gamma_h}V_h(\omega) &\cdot \nabla_{\Gamma_h}\zeta_h(\omega)
 + V_h(\omega) \zeta_h(\omega) \sqrt{g_{\Gamma_0}^{-l}(\omega)}\\
 \int_{\Omega} \int_{\Gamma_h} \left( \alpha U_h(\omega) - \beta V_h(\omega) \right) &\left( \alpha \phi_h(\omega) - \beta \zeta_h(\omega) \right) \sqrt{g_{\Gamma_0}^{-l}(\omega)}\\
 &= 
 \alpha \int_{\Omega}\int_{D_h} f^{-l}(\omega) \phi_h(\omega) \sqrt{g^{-l}(\omega)}
 + \beta \int_{\Omega} \int_{\Gamma_h} f_{\Gamma_0}^{-l}(\omega) \zeta_h(\omega)\sqrt{g_{\Gamma_0}^{-l}(\omega) }
\end{align*}
for every $(\phi_h, \zeta_h) \in L^2(\Omega; V_h \times S_h). $
\end{problem}
Here the abstract finite dimensional space  is $\mathcal{V}_h = V_h\times S_h$ and 
the Hilbert spaces $V_h,H_h$ are given by $H^1(D_0)\times H^1(\Gamma_0)$ and $L^2(D_0)\times L^2(\Gamma_0)$ respectively.
We denote the associated bilinear form  
and linear functional 
$$
a_h(\cdot, \cdot): L^2(\Omega; V_h \times S_h) \times L^2(\Omega; V_h\times S_h) \rightarrow \re 
\quad
l_h(\cdot): L^2(\Omega; V_h \times S_h) \rightarrow \re
$$
to be the respective left hand side and right hand side of the semi-discrete 
variational problem \ref{Semi-discrete problem}. By the uniform bounds on the random coefficients (\ref{eq:UniformBoundBulkCoef}), (\ref{eq:uniformBoundSurfCoef1}), we deduce the existence and uniqueness of a semi-discrete solution using a similar
argument to the continuous problem. 
We proceed in a similar manner and check that the assumptions of the abstract analysis are satisfied. 
The norm equivalence (L1), (L2) of the lifting mapping which in this setting 
$\Lambda_h : V_h\times S_h \rightarrow V_h^l\times S_h^l$ is given component-wise by 
\begin{equation}
\label{lift:coupled}
 \Lambda_h\left((\phi_h, \zeta_h) \right) = ( \phi_h^l, \zeta_h^l),
\end{equation}
follows from the estimates on the surface lifting map given Lemma \ref{lifts} in combination with the following bulk lifting norm equivalence 
 derived in \cite[Proposition 4.9]{elliott2012finite}.
\begin{lemma}[Bulk lift estimates]
 There exists constants $c_1,c_2>0$ independent of $h$, such that 
 for any $\phi_h: D_h\rightarrow \re$ with lift $\varphi_h = \phi_h^l:D_0\rightarrow \re$ we have
 \begin{align*}
  c_1 \| \phi_h\|_{L^2(D_h)} &\leq \| \varphi_h\|_{L^2(D_0)} \leq c_2 \| \phi_h\|_{L^2(D_h)}\\
  c_1 \| \phi_h\|_{H^1(D_h)} &\leq \| \varphi_h\|_{H^1(D_0)} \leq c_2 \| \phi_h\|_{H^1(D_h)}.
  \end{align*}
\end{lemma}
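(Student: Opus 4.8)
The plan is to prove both equivalences by a change of variables through the piecewise diffeomorphism $G_h : D_h \to D_0$, using that $G_h$ equals the identity on the interior simplices and is a controlled perturbation of the identity on the boundary simplices. First I would split each integral over $D_0$ (respectively $D_h$) into the contribution from the interior simplices and the contribution from the boundary strip $B_h^l$ (respectively $B_h$). On the interior simplices $G_h = \mathrm{id}$, so the $L^2$- and $H^1$-norm contributions of $\varphi_h = \phi_h^l$ and $\phi_h$ agree exactly and require no estimate; only the boundary strip needs work.

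On each boundary simplex $K \in \mathcal{B}_h$ I would change variables $x = G_h(y)$, $y \in K$, so that $\mathrm{d}x = |\det DG_h(y)|\,\mathrm{d}y$. The analytic input needed is the uniform bound
\[
 \| DG_h \|_{L^\infty(B_h)} \le C, \qquad \| (DG_h)^{-1} \|_{L^\infty(B_h)} \le C, \qquad C^{-1} \le |\det DG_h| \le C \quad \text{on } B_h,
\]
with $C$ independent of $h$ (in fact $\det DG_h = 1 + O(h^2)$, but the two-sided bound already suffices). Given this, the $L^2$ equivalence follows at once from $\int_{K^e} |\varphi_h|^2 = \int_K |\phi_h|^2\,|\det DG_h|$ and its inverse; for the $H^1$ equivalence I would use the chain rule $\nabla \varphi_h(x) = \big(DG_h(G_h^{-1}(x))\big)^{-\top} \nabla \phi_h(G_h^{-1}(x))$, perform the same substitution, and absorb the factors $(DG_h)^{-\top}$ and $|\det DG_h|$ into the uniform constants, with the reverse inequality obtained symmetrically by working with $G_h^{-1}$. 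The trace compatibility on $\partial D_h$ ensured by \eqref{eq:G_h} means the surface estimates of Lemma \ref{lifts} and these bulk estimates are consistent, so no further care is needed at the interface.

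The main obstacle is establishing the uniform bounds on $DG_h$ and on $\det DG_h$ over the boundary simplices. This is exactly where the explicit construction of the maps $\Lambda_{h,K}$ from \cite{elliott2012finite} is used: one checks that deforming a flat boundary simplex onto the exact curved simplex $K^e$ via the projection $a^{\Gamma_0}$ — which is $C^1$ on the tubular neighbourhood $U_\delta$ by Lemma \ref{lemma:fermi} — yields a Jacobian that is uniformly non-degenerate, using that $h$ is small enough for every boundary simplex to lie inside $U_\delta$, the shape-regularity and quasi-uniformity of $\mathcal{T}_h$, and the fact that the variation of $\nu^{\Gamma_0}$ over a simplex of diameter $h$ is $O(h)$. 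Rather than reproduce this, I would cite \cite[Proposition 4.9]{elliott2012finite} for these geometric estimates, after which the argument reduces to the routine change-of-variables bookkeeping described above.
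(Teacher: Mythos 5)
Your argument is correct, and in substance it supplies the proof that the paper itself omits: the paper simply invokes \cite[Proposition 4.9]{elliott2012finite} for this norm equivalence, and your change-of-variables argument (split into interior simplices where $G_h=\mathrm{id}$, transform on the boundary simplices, use the chain rule for the gradient, and absorb the uniformly bounded factors $DG_h$, $(DG_h)^{-1}$, $|\det DG_h|$ into constants) is exactly the mechanism behind that citation. Two small points of bookkeeping. First, your closing citation is slightly off target: Proposition 4.9 of \cite{elliott2012finite} \emph{is} the bulk norm equivalence you are proving, so citing it for the uniform Jacobian bounds would be circular in spirit; the geometric inputs you need are the estimates on $G_h$ (the paper's (\ref{eq:geoBulkEst1})--(\ref{eq:geoBulkEst2}), i.e.\ $\|\nabla G_h - I\|_{L^\infty(D_h)}\le ch$ and $\|\delta_h^{D_0}-1\|_{L^\infty(D_h)}\le ch$), which come from the construction of $\Lambda_{h,K}$ in \cite[Proposition 4.7]{elliott2012finite}; for $h$ small these give precisely the two-sided bounds on $DG_h$, $(DG_h)^{-1}$ and $|\det DG_h|$ that your argument requires. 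Second, your parenthetical claim that $\det DG_h = 1 + O(h^2)$ overstates what is available in the bulk (the stated bound is $O(h)$ on the boundary strip), but as you yourself note, only the uniform two-sided bound is needed, so this does not affect the proof. With the citation redirected as above, the element-wise change of variables, the exact agreement on interior simplices, and the chain-rule identity $\nabla\varphi_h = (DG_h)^{-\top}\nabla\phi_h\circ G_h^{-1}$ give both the $L^2$ and $H^1$ equivalences with constants independent of $h$.
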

For the interpolation assumption (I1), we set the abstract function space $Z_0 = H^2(D_0)\times H^2(\Gamma_0)$ and define the interpolation operator component-wise
\begin{equation}
  I_h(\eta, \xi) = \left(( \tilde{I}_h \eta^{-l})^l , ( \tilde{I}_h \xi^{-l})^l \right)
\end{equation}
with $\tilde{I}_h$ denoting the standard Lagrangian intepolation operator
and have the following estimate .
\begin{lemma}[Interpolation estimate] 
 There exists a well-defined interpolation operator
 $$I_h: H^2(D_0)\times H^2(\Gamma_0) \rightarrow V_h^l\times S_h^l$$
 such that for any $(\eta, \xi)\in H^2(D_0)\times H^2(\Gamma_0)$ we have
 \begin{equation}
  \label{eq:InterpCoupled}
  \|(\eta, \xi) - I_h(\eta, \xi) \|_{L^2(D_0)\times L^2(\Gamma_0)}
  +
  h\|(\eta, \xi) - I_h(\eta, \xi) \|_{H^1(D_0)\times H^1(\Gamma_0)}
  \leq 
  ch^2 \|(\eta, \xi)\|_{H^2(D_0)\times H^2(\Gamma_0)}.
 \end{equation}
\end{lemma}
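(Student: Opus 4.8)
The plan is to split the estimate into a bulk factor and a surface factor, exploiting that the product norms on $H^2(D_0)\times H^2(\Gamma_0)$, $H^1(D_0)\times H^1(\Gamma_0)$ and $L^2(D_0)\times L^2(\Gamma_0)$ are the $\ell^2$-combinations of their two components and that $I_h$ acts componentwise: writing $\eta_I := (\tilde{I}_h\eta^{-l})^l$ and $\xi_I := (\tilde{I}_h\xi^{-l})^l$, it suffices to prove the claimed bound of order $h^2$ separately for $\|\eta-\eta_I\|_{L^2(D_0)}+h\|\eta-\eta_I\|_{H^1(D_0)}$ and for $\|\xi-\xi_I\|_{L^2(\Gamma_0)}+h\|\xi-\xi_I\|_{H^1(\Gamma_0)}$. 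For well-definedness I would first invoke the Sobolev embeddings $H^2(D_0)\hookrightarrow C^0(\overline{D_0})$ (valid for $n\le 2$) and $H^2(\Gamma_0)\hookrightarrow C^0(\Gamma_0)$ (valid for $n\le 3$), so that the inverse lifts $\eta^{-l}\in C^0(D_h)$ and $\xi^{-l}\in C^0(\Gamma_h)$ are continuous and the nodal interpolants $\tilde{I}_h\eta^{-l}\in V_h$, $\tilde{I}_h\xi^{-l}\in S_h$ are meaningful.

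The surface factor is immediate: it is precisely Lemma \ref{interpolationest} applied to $\xi\in H^2(\Gamma_0)$, which supplies $\|\xi-\xi_I\|_{L^2(\Gamma_0)}+h\|\nabla_{\Gamma_0}(\xi-\xi_I)\|_{L^2(\Gamma_0)}\le ch^2\|\xi\|_{H^2(\Gamma_0)}$, and combining the $L^2$ and seminorm parts upgrades this to the full $H^1(\Gamma_0)$ norm on the left-hand side.

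For the bulk factor I would decompose $D_0$ into the union of the (unchanged) interior simplices and the curved boundary layer $B_h^l=\bigcup_{K\in\mathcal B_h}K^e$. On the interior part $G_h$ is the identity, $\eta_I$ coincides with the classical $P^1$ Lagrange interpolant of $\eta$, and the Bramble--Hilbert / Deny--Lions estimate on each shape-regular simplex gives $\|\eta-\tilde{I}_h\eta\|_{H^k(K)}\le ch^{2-k}|\eta|_{H^2(K)}$ for $k=0,1$; summing and using quasi-uniformity yields the order $h^2$ bound there. On a boundary simplex the map $G_h|_K=\Lambda_{h,K}:K\to K^e$ is a genuine curved diffeomorphism, and here I would use the quantitative bounds on $\Lambda_{h,K}$ from \cite{elliott2012finite} --- uniform control of $D\Lambda_{h,K}$ and its inverse together with $C^2$-closeness of $\Lambda_{h,K}$ to the identity at the appropriate power of $h$ --- to pull the interpolation error on $K^e$ back to the reference element; the perturbations of the Jacobian and of the transformed gradients are then lower order, so the standard interpolation orders survive and $\|\eta-\eta_I\|_{H^k(K^e)}\le ch^{2-k}\|\eta\|_{H^2(K^e)}$. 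Summing over $\mathcal B_h$ and using $\sum_{K\in\mathcal B_h}\|\eta\|_{H^2(K^e)}^2\le c\|\eta\|_{H^2(D_0)}^2$ finishes the bulk estimate. Since this bulk interpolation bound is exactly the one established in \cite{elliott2012finite}, the cleanest write-up simply cites that reference for the bulk factor, cites Lemma \ref{interpolationest} for the surface factor, and adds the two squared contributions.

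The main obstacle is the boundary-layer analysis: verifying that the curved lift $\Lambda_{h,K}$ perturbs the Jacobian and the transformed gradients only at relative order $h$, so that the $h^2$ and $h$ interpolation orders are not degraded on the thin curved strip $B_h^l$. All the needed bounds on $\Lambda_{h,K}$ are available in \cite{elliott2012finite}, so this is careful bookkeeping rather than a new difficulty; the remaining ingredients --- the Sobolev embeddings for well-definedness, the Bramble--Hilbert estimate on interior elements, and the componentwise splitting --- are entirely routine.
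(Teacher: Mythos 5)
Your proposal is correct and follows essentially the same route as the paper, which gives no separate proof here: it defines $I_h$ componentwise and relies on the surface estimate of Lemma \ref{interpolationest} (from \cite{dziuk2013finite}) together with the bulk interpolation estimate established in \cite{elliott2012finite}. Your componentwise splitting, the Sobolev embeddings for well-definedness, and the sketch of the interior/boundary-simplex bulk argument are exactly the ingredients the cited references supply, so no new work is needed beyond what you describe.
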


The next step will entail bounding the consistency error arising from the geometric approximation of the domain. 
Estimates for the surface pertubation have previously been given in Lemma \ref{geometricerror}. 
For the bulk approximation, we recall that the lifting mapping $G_h: D_h\rightarrow D_0$ is defined to be the identity on interior simplices and 
a $C^1-$diffeomorphism for simplices near the boundary. Therefore the corresponding bulk error will be comprised of two parts; the first part will be related to the smallness
of the neighbourhood around $\Gamma_0$ in which the lifted boundary simplices lie in and the second part 
is the associated geometric error of the boundary simplices approximating the corresponding exact curved simplex.
We begin with the latter and state geometric bulk estimates on the diffeomorphic mapping $G_h$, for which a proof of the bounds (\ref{eq:geoBulkEst1}) and (\ref{eq:geoBulkEst2})
can be found in \cite[Proposition 4.7]{elliott2012finite}.
\begin{lemma}[Geometric bulk estimates]
 Let $\delta_h^{D_0} = |det(\nabla G_h)|$ be the volume element corresponding to the transformation $G_h: D_h\rightarrow D_0$ and set
 \[
  R_h^{D_0}(\omega) = \frac{1}{\delta_h^{D_0}} \left( \mathcal{D}^{-l}(\omega) \right)^{-1}  \nabla G_h \mathcal{D}^{-l}(\omega) \nabla G_h^{\top}. 
 \]
Then we have the following estimates for a constant $c>0$ independent of $\omega$,
\begin{align}
\label{eq:geoBulkEst1}
 \|\nabla G_h - I \|_{L^{\infty}(D_h)} &\leq c h\\
 \label{eq:geoBulkEst2}
\| \delta_h^{D_0} - 1 \|_{L^{\infty}(D_h)} &\leq c h\\
\label{eq:geoBulkEst3}
\|R_h^{D_0}(\omega) - I \|_{L^{\infty}(D_h)} &\leq ch.
\end{align}
\end{lemma}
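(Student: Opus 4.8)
The plan is to reduce \eqref{eq:geoBulkEst3} to the first two estimates, which are purely geometric and established in \cite[Proposition 4.7]{elliott2012finite}, together with the uniform bounds \eqref{eq:UniformBoundBulkCoef} on the random bulk coefficient $\mathcal{D}(\omega)$. First I would observe that on every interior simplex the lifting map $G_h$ is the identity, so that $\nabla G_h = I$, $\delta_h^{D_0} = 1$, and hence $R_h^{D_0}(\omega) = \left(\mathcal{D}^{-l}(\omega)\right)^{-1}\mathcal{D}^{-l}(\omega) = I$ there. Consequently $R_h^{D_0}(\omega) - I$ is supported on the boundary simplices $B_h$, and it suffices to estimate it in $L^\infty(B_h)$.

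On those simplices I would write $\nabla G_h = I + E_h$ and $\delta_h^{D_0} = 1 + e_h$, where $\|E_h\|_{L^\infty(D_h)} \leq ch$ and $\|e_h\|_{L^\infty(D_h)} \leq ch$ by \eqref{eq:geoBulkEst1} and \eqref{eq:geoBulkEst2}. Expanding the product gives
\[
\left(\mathcal{D}^{-l}\right)^{-1}\nabla G_h\, \mathcal{D}^{-l}\, \nabla G_h^{\top}
= I + \left(\mathcal{D}^{-l}\right)^{-1}\!\left( E_h \mathcal{D}^{-l} + \mathcal{D}^{-l} E_h^{\top} + E_h \mathcal{D}^{-l} E_h^{\top} \right) =: I + \tilde{E}_h ,
\]
and dividing by $\delta_h^{D_0} = 1 + e_h$ then yields the clean identity
\[
R_h^{D_0}(\omega) - I = \frac{1}{1+e_h}\left( \tilde{E}_h - e_h I \right).
\]
For $h$ small enough that $|e_h| \leq ch \leq \tfrac12$, the factor $(1+e_h)^{-1}$ is bounded, so the bound on $R_h^{D_0}(\omega) - I$ reduces to bounding $\tilde{E}_h$ together with $e_h$.

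The essential input here is that $\mathcal{D}^{-l}(\omega)$ and $\left(\mathcal{D}^{-l}(\omega)\right)^{-1}$ are bounded in $L^\infty$ uniformly in both $\omega$ and $h$. I would deduce this from \eqref{eq:UniformBoundBulkCoef}: the singular values of $\mathcal{D}(\omega)$ are pinched between $C_D^{-1}$ and $C_D$ pointwise, and since lifting is merely composition with the piecewise diffeomorphism $G_h^{-1}$ it does not change pointwise spectral bounds, so the same two-sided bound holds for $\mathcal{D}^{-l}(\omega)$ uniformly in $h$. Granting this, $\|\tilde{E}_h\|_{L^\infty} \leq c\,\|E_h\|_{L^\infty}\left(1 + \|E_h\|_{L^\infty}\right) \leq ch$, and combining with $\|e_h\|_{L^\infty}\leq ch$ gives $\|R_h^{D_0}(\omega) - I\|_{L^\infty(D_h)} \leq ch$ with $c$ independent of $\omega$.

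I expect the only real obstacle to be the bookkeeping of $\omega$-uniformity: the matrix algebra above is elementary, but one has to be careful that neither the deterministic geometric estimates nor the construction of $G_h$ near the boundary depend on the realisation (they do not, since $G_h$ depends only on the reference geometry and the triangulation), and that the constants $C_D$ in \eqref{eq:UniformBoundBulkCoef}, which ultimately stem from \eqref{assumptionMapping2}, are genuinely $\omega$-independent.
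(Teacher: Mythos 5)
Your proposal is correct and follows essentially the same route as the paper: estimates \eqref{eq:geoBulkEst1} and \eqref{eq:geoBulkEst2} are taken from \cite[Proposition 4.7]{elliott2012finite}, and \eqref{eq:geoBulkEst3} is obtained by an elementary algebraic decomposition of $R_h^{D_0}(\omega)-I$ into terms controlled by $\nabla G_h - I$ and $\delta_h^{D_0}-1$, combined with the $\omega$-uniform bounds \eqref{eq:UniformBoundBulkCoef} on $\mathcal{D}(\omega)$. The paper uses a telescoping identity rather than your perturbation expansion in $E_h$, $e_h$, but this is only a cosmetic difference.
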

\begin{proof}
 The estimate (\ref{eq:geoBulkEst3}) follows from the observation
 \begin{align*}
  R_h^{D_0}(\omega) - I  
  &= \frac{1}{\delta_h^{D_0}} \left( \mathcal{D}^{-l}(\omega)\right)^{-1} \nabla G_h \mathcal{D}^{-l}(\omega) \left( \nabla G_h^{\top} - I \right)
  + \frac{1}{\delta_h^{D_0}} \left( \mathcal{D}^{-l}(\omega)\right)^{-1} \left( \nabla G_h - I\right) \mathcal{D}^{-l}(\omega)\\
  &+ \left( \frac{1}{\delta_h^{D_0}} -1 \right) I.
 \end{align*}
and the uniform bounds (\ref{eq:UniformBoundBulkCoef}) on the random coefficient $\mathcal{D}(\omega)$.  
\end{proof}
To obtain a bound on the open neighbourhood containing the boundary simplices, we have the subsequent narrow band inequality  \cite[Lemma 4.10]{elliott2012finite}.
\begin{lemma}[Narrow band trace inequality]
\label{lemma:NarrowBand}
Given any $\delta< \delta_{\Gamma_0}$, let $\mathcal{N}_{\delta}$ be a narrow band in the interior domain $D_0$ around the boundary $\Gamma_0$ 
defined by
\begin{equation}
\label{def:N}
 \mathcal{N}_{\delta} =\{ x \in D_0\, | \, -\delta < d(x) <0\}.
\end{equation}
 Then for any $\eta \in H^1(D_0)$ we have
\[
 \| \eta \|_{L^2(\mathcal{N}_{\delta})} \leq c \delta^{\frac{1}{2}} \|\eta \|_{H^1(D_0)}.
\]
\end{lemma}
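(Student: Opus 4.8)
The plan is to change variables to Fermi coordinates inside the narrow band, reduce to a one-dimensional estimate via the fundamental theorem of calculus, and then close the argument using the standard trace inequality together with the bound $\delta < \delta_{\Gamma_0}$. First I would reduce to the case $\eta \in C^1(\overline{D_0})$ by density, since $C^1(\overline{D_0})$ is dense in $H^1(D_0)$ and both sides of the claimed inequality are continuous with respect to the $H^1(D_0)$-norm (the trace on $\Gamma_0$ appearing implicitly below is a bounded operator).

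Next, since $\delta < \delta_{\Gamma_0}$, Lemma \ref{lemma:fermi} guarantees that the map $\Phi : \Gamma_0 \times (-\delta,0) \to \mathcal{N}_\delta$ given by $\Phi(a,s) = a + s\,\nu^{\Gamma_0}(a)$ is a bijection onto $\mathcal{N}_\delta$ with $C^1$ inverse $x \mapsto (a^{\Gamma_0}(x), d^{\Gamma_0}(x))$, and its Jacobian, which restricted to $T_a\Gamma_0$ equals $\det\!\big(I + s\,\mathcal{H}^{\Gamma_0}(a)\big)$, satisfies a uniform two-sided bound $0 < c_1 \le J(a,s) \le c_2 < \infty$ for all $a \in \Gamma_0$ and $|s| < \delta_{\Gamma_0}$ (after possibly shrinking $\delta_{\Gamma_0}$), because $\Gamma_0 \in C^2$ is compact and hence has uniformly bounded principal curvatures. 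Consequently
\[
 \|\eta\|_{L^2(\mathcal{N}_\delta)}^2 \le c_2 \int_{\Gamma_0}\int_{-\delta}^0 |\eta(\Phi(a,s))|^2 \, ds \, dA_{\Gamma_0}(a).
\]

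Then, for fixed $a \in \Gamma_0$, setting $\tilde\eta(s) := \eta(\Phi(a,s))$, the fundamental theorem of calculus gives $\tilde\eta(s) = \eta(a) - \int_s^0 \nabla\eta(\Phi(a,r)) \cdot \nu^{\Gamma_0}(a)\, dr$ for $s \in (-\delta,0)$, where $\eta(a) = \tilde\eta(0)$ is the trace of $\eta$ on $\Gamma_0$. Applying the Cauchy--Schwarz inequality to the integral and using $|s| \le \delta$, I obtain
\[
 |\tilde\eta(s)|^2 \le 2|\eta(a)|^2 + 2\delta \int_{-\delta}^0 |\nabla\eta(\Phi(a,r))|^2 \, dr .
\]
Integrating this over $s \in (-\delta,0)$ and then over $a \in \Gamma_0$, and using the Jacobian bounds once more to return from $(a,r)$-coordinates to $x \in \mathcal{N}_\delta \subset D_0$, yields
\[
 \|\eta\|_{L^2(\mathcal{N}_\delta)}^2 \le c\,\delta\,\|\eta\|_{L^2(\Gamma_0)}^2 + c\,\delta^2\,\|\nabla\eta\|_{L^2(D_0)}^2 .
\]

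Finally, I would invoke the standard trace inequality $\|\eta\|_{L^2(\Gamma_0)} \le c\,\|\eta\|_{H^1(D_0)}$, valid since $\Gamma_0 = \partial D_0$ is of class $C^2$, and estimate $\delta^2 \le \delta_{\Gamma_0}\,\delta$, to conclude $\|\eta\|_{L^2(\mathcal{N}_\delta)}^2 \le c\,\delta\,\|\eta\|_{H^1(D_0)}^2$; taking square roots and removing the $C^1$ restriction by density gives the stated inequality. The only step that requires genuine care is the uniform two-sided bound on the Fermi-coordinate Jacobian, which is exactly where compactness of $\Gamma_0$ and its $C^2$ regularity are used; everything else is elementary one-dimensional calculus and a change of variables.
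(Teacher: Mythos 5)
Your argument is correct and complete. Note, however, that the paper itself does not prove this lemma: it is quoted verbatim from Elliott and Ranner (cited as \cite[Lemma 4.10]{elliott2012finite}), so your proposal should be compared with the proof given there. That proof runs through the co-area formula for the signed distance function: one writes $\|\eta\|_{L^2(\mathcal{N}_\delta)}^2$ as an integral over $r\in(-\delta,0)$ of $\|\eta\|_{L^2(\Gamma_r)}^2$, where $\Gamma_r=\{d=r\}$ are the parallel surfaces, and then invokes a trace inequality on each $\Gamma_r$ with a constant uniform in $r$ (which holds because the $\Gamma_r$ are uniformly $C^2$ perturbations of $\Gamma_0$ for $|r|<\delta_{\Gamma_0}$); integrating in $r$ immediately produces the factor $\delta$. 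You instead flatten the collar by the Fermi-coordinate map $\Phi(a,s)=a+s\,\nu^{\Gamma_0}(a)$, reduce to a one-dimensional fundamental-theorem-of-calculus estimate along the normal lines, and only use the trace inequality on the single surface $\Gamma_0$; the Jacobian $\det\bigl(I+s\,\mathcal{H}^{\Gamma_0}(a)\bigr)$ plays the role that the uniform trace constant plays in the cited proof. The two routes are essentially equivalent in content; yours has the advantage of being self-contained and of needing the trace theorem only on $\partial D_0$, while the co-area argument avoids the explicit FTC step at the price of a uniform-in-$r$ trace constant. One cosmetic point: the lemma is stated for all $\delta<\delta_{\Gamma_0}$, so rather than ``possibly shrinking $\delta_{\Gamma_0}$'' you should simply take $\delta_{\Gamma_0}$ to be (as is standard, and as Lemma \ref{lemma:fermi} implicitly requires) already small enough that $1+s\kappa_j^{\Gamma_0}\ge c_0>0$ for $|s|<\delta_{\Gamma_0}$, which gives the two-sided Jacobian bound on the whole admissible range; this is a matter of bookkeeping, not a gap.
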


The consistency error can now be bounded as follows.
\begin{lemma}[Consistency error]
Assume $f \in L^2(\Omega; H^1(D_0))$. Then for any 
$\phi_h, W_h \in L^2(\Omega;V_h)$ and $\zeta_h, X_h \in L^2(\Omega; S_h)$ with corresponding lifts
$\varphi_h, w_h $ and $\xi_h, \chi_h  $
we have 
\begin{align}
\label{eq:ConsistL}
 |l(\varphi_h, \xi_h) - l_h(\phi_h, \zeta_h)| &\leq ch^2 \| (f,f_{\Gamma_0})\|_{L^2(\Omega;H^1(D_0)\times L^2(\Gamma_0))}
 \|(\varphi_h, \xi_h)\|_{L^2(\Omega; H^1(D_0)\times H^1(\Gamma_0))}\\
 \label{eq:ConsistA1}
 |a\left( (\varphi_h, \xi_h), (w_h, \chi_h) \right) - a_h( &(\phi_h, \zeta_h),(W_h,X_h))|\\
 &\leq ch \|(\varphi_h, \zeta_h)\|_{L^2(\Omega; H^1(D_0)\times H^1(\Gamma_0))}
 \|(w_h, \chi_h) \|_{L^2(\Omega; H^1(D_0)\times H^1(\Gamma_0))}. \nonumber
\end{align}
Furthermore, 
for any $\varphi, w \in L^2(\Omega; H^2(D_0))$ and $\xi, \chi \in L^2(\Omega; H^2(\Gamma_0))$ 
with inverse lifts $\varphi^{-l}, w^{-l}$ and $\xi^{-l},  \chi^{-l}$
we have 
\begin{align}
 \label{eq:ConsistA2}
|a\left( (\varphi, \xi), (w, \chi) \right) &- a_h\left( (\varphi^{-l}, \xi^{-l}), ( w^{-l}, \chi^{-l}) \right)|\\
 &\leq c h^2 \| ( \varphi, \xi) \|_{L^2(\Omega; H^2(D_0)\times H^2(\Gamma_0))}
\|(w, \chi)\|_{L^2(\Omega; H^2(D_0)\times H^2(\Gamma_0))}. \nonumber
 \end{align}
\end{lemma}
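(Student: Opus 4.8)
The plan is to split each of the three differences into a bulk contribution over $D_0$, a surface contribution over $\Gamma_0$, and a Robin coupling contribution over $\Gamma_0$, obtained by lifting the discrete integrals from $D_h$ and $\Gamma_h=\partial D_h$ onto $D_0$ and $\Gamma_0$ via $G_h$, whose restriction to $\partial D_h$ is the projective map $a^{\Gamma_0}$ by (\ref{eq:G_h}). Since all the relevant estimates hold uniformly in $\omega$ with $\omega$-independent constants, the stated $L^2(\Omega;\cdot)$ bounds follow by integrating over $\Omega$ and using the tensor structure $L^2(\Omega;H^1)\cong L^2(\Omega)\otimes H^1$; I therefore argue pathwise throughout. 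For the bulk bilinear-form term the change of variables $y=G_h(x)$ gives, exactly as in \cite{elliott2012finite}, the error expression $\int_{D_0}\mathcal{D}(\omega)\big(I-R_h^{D_0,l}(\omega)\big)\nabla\varphi_h\cdot\nabla w_h+\big(1-1/\delta_h^{D_0,l}\big)\varphi_h w_h\sqrt{g(\omega)}$, and for the bulk load term $\int_{D_0}\big(1-1/\delta_h^{D_0,l}\big)f\varphi_h\sqrt{g(\omega)}$; the analogous lifted surface expressions involve $\mathcal{P}_{\Gamma_0}-R_h^{\Gamma_0,l}(\omega)$ and $1-1/\delta_h^{\Gamma_0,l}$, and the lifted Robin term is $\int_{\Gamma_0}\big(1-1/\delta_h^{\Gamma_0,l}\big)(\alpha\varphi_h-\beta\xi_h)(\alpha w_h-\beta\chi_h)\sqrt{g_{\Gamma_0}(\omega)}$.

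For the surface and Robin coupling contributions, these are of exactly the form already treated in the surface-only case, so I would invoke the surface geometric bounds (\ref{eq:surfGeoEstJ}), (\ref{eq:surfGeoEstR}) of Lemma \ref{geometricerror} together with the uniform coefficient bounds (\ref{eq:uniformBoundSurfCoef1})--(\ref{eq:uniformBoundSurfCoef2}), precisely as in the proof of (\ref{eq:ConsistLSurf})--(\ref{a_bound}). This bounds the surface bilinear-form error, the surface load error, and the Robin coupling error by $ch^{2}$ times the appropriate $L^2(\Omega;L^2)$ or $L^2(\Omega;H^1)$ norms, using in addition the boundedness of the trace operator $\|\cdot\|_{L^2(\Gamma_0)}\lesssim\|\cdot\|_{H^1(D_0)}$ for the $\varphi_h$-component of the coupling term. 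Since $h^2\le h$, this is already enough for the surface part of all three inequalities.

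It remains to treat the bulk contributions, where the two estimates (\ref{eq:ConsistA1}) and (\ref{eq:ConsistA2}) genuinely differ. Because $G_h$ is the identity on interior simplices, we have $\nabla G_h=I$, $\delta_h^{D_0}=1$ and $R_h^{D_0}=I$ off the lifted boundary strip $B_h^l$ of (\ref{eq:Bdef}), so all bulk error integrals are supported on $B_h^l$. For (\ref{eq:ConsistA1}) one simply uses $\|I-R_h^{D_0,l}\|_{L^\infty},\ \|1-1/\delta_h^{D_0,l}\|_{L^\infty}\lesssim h$ from the geometric bulk estimates together with (\ref{eq:UniformBoundBulkCoef}) and Cauchy--Schwarz with the full $H^1(D_0)$ norms, which yields the $O(h)$ bound. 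To obtain the sharper $O(h^{2})$ bound in (\ref{eq:ConsistA2}), and the $O(h^2)$ bound on the bulk load term required by (\ref{eq:ConsistL}), I would use that the boundary simplices touch $\Gamma_0$ and have diameter $\le h$, so that for $h$ small $B_h^l$ is contained in the narrow band $\mathcal{N}_{ch}$ of (\ref{def:N}); then the narrow band trace inequality (Lemma \ref{lemma:NarrowBand}), applied componentwise to $\nabla\varphi\in H^1(D_0)^{n+1}$ (using $\varphi\in H^2(D_0)$) and to $\nabla w$, gives $\|\nabla\varphi\|_{L^2(B_h^l)}\lesssim h^{1/2}\|\varphi\|_{H^2(D_0)}$ and $\|\nabla w\|_{L^2(B_h^l)}\lesssim h^{1/2}\|w\|_{H^2(D_0)}$, so that together with the $O(h)$ geometric factor the bulk bilinear-form error is $\lesssim h\cdot h^{1/2}\cdot h^{1/2}=h^{2}$ times the $H^2$ norms (the zeroth-order bulk term being handled the same way); applying Lemma \ref{lemma:NarrowBand} instead to $f\in H^1(D_0)$ and to the finite element function $\varphi_h\in H^1(D_0)$ gives $\|f\|_{L^2(B_h^l)}\|\varphi_h\|_{L^2(B_h^l)}\lesssim h\|f\|_{H^1(D_0)}\|\varphi_h\|_{H^1(D_0)}$ and hence, with the extra $O(h)$ factor, the $O(h^{2})$ bound on the bulk load term. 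Combining the bulk estimates with the surface estimates of the previous paragraph yields the three claimed inequalities.

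The main obstacle is this last step: recognising that the bulk consistency error is concentrated in the $O(h)$-wide strip $B_h^l$ and that the extra half-power of $h$ from each of the two solution factors (respectively from the data $f$ and from the finite element function) must be extracted through the narrow band trace inequality — which is exactly why $H^2$ bulk regularity is imposed on the test and trial functions in (\ref{eq:ConsistA2}) and $H^1$ regularity on $f$ in the hypothesis of the lemma. One must also verify the inclusion $B_h^l\subset\mathcal{N}_{ch}$, which follows from quasi-uniformity of $\mathcal{T}_h$ together with the construction of the curved simplices $K^e$.
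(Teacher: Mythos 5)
Your proposal is correct and follows essentially the same route as the paper: lift via $G_h$ (identity off the boundary strip) to localise the bulk error on $B_h^l$, use the $O(h)$ bulk and $O(h^2)$ surface geometric estimates with the uniform coefficient bounds and the trace inequality for the Robin coupling, and recover the extra power of $h$ for the load term and for (\ref{eq:ConsistA2}) through the narrow band trace inequality applied to $f$, $\varphi_h$ and to the gradients of the $H^2$ functions. The only cosmetic difference is that you spell out the inclusion $B_h^l\subset\mathcal{N}_{ch}$ and the componentwise application to $\nabla\varphi$, which the paper leaves implicit by choosing $h<\delta<ch$.
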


\begin{proof}
 For the estimate (\ref{eq:ConsistL}), we begin by lifting the discrete integrals in $l_h(\cdot)$ onto their respective continuous counterparts
 recalling that the set of all boundary simplices $B_h$ is the region in which the diffeomorphic mapping $G_h$ differs from the identity and 
thus where $\delta_h^{D_0} = det(\nabla G_h) \neq 1$,
 \begin{align*}
  l(\varphi_h, \xi_h) &- l_h(\phi_h, \zeta_h) \\
  &= \alpha \int_{\Omega} \int_{D_0} \left( 1 - \frac{1}{\delta_h^{D_0,l}} \right) f(\omega) \varphi_h(\omega) \sqrt{g(\omega)}
  + \beta \int_{\Omega} \int_{\Gamma_0}  \left( 1 - \frac{1}{\delta_h^{\Gamma_0,l}} \right) f_{\Gamma_0}(\omega) \xi_h(\omega) \sqrt{g_{\Gamma_0}(\omega)}\\
  &= \alpha \int_{\Omega} \int_{B_h^l} \left( 1 - \frac{1}{\delta_h^{D_0,l}} \right) f(\omega) \varphi_h(\omega) \sqrt{g(\omega)}
  + \beta \int_{\Omega} \int_{\Gamma_0}  \left( 1 - \frac{1}{\delta_h^{\Gamma_0,l}} \right) f_{\Gamma_0}(\omega) \xi_h(\omega) \sqrt{g_{\Gamma_0}(\omega)}.
  \end{align*}
 Substituting the geometric bulk and surface estimates (\ref{eq:geoBulkEst2}), (\ref{eq:surfGeoEstJ}) 
with the uniform bounds on the random coefficients (\ref{eq:UniformBoundBulkCoef}),
(\ref{eq:uniformBoundSurfCoef1})
leads to 
\begin{align}
\nonumber
 |l(\varphi_h, \xi_h) - l_h(\phi_h, \zeta_h)|
\lesssim h \|f \|_{L^2(\Omega; L^2(B_h^l))} \| \varphi_h\|_{L^2(\Omega;L^2(B_h^l))} 
 + h^2  \|f_{\Gamma_0} \|_{L^2(\Omega; L^2(\Gamma_0))} \| \xi_h\|_{L^2(\Omega;L^2(\Gamma_0))}.
\end{align} 
To obtain a bound of order $h^2$ on the bulk term, we will now apply the narrow trace band inequality. 
We choose $\delta>0$ such that 
$0 < h < \delta < ch$ for some constant $c>0$, thus giving
\begin{equation}
 \|f\|_{L^2(\Omega; L^2(B_h^l))} \leq \| f \|_{L^2(\Omega; L^2(\mathcal{N}_{\delta}))} 
 \leq c \delta^{\frac{1}{2}}\| f \|_{L^2(\Omega; H^1(D_0))} \leq c h^{\frac{1}{2}} \| f \|_{L^2(\Omega; H^1(D_0))}.
\end{equation}
With a similar estimate on the test function $\varphi_h$, we obtain (\ref{eq:ConsistL}).
For (\ref{eq:ConsistA1}) and (\ref{eq:ConsistA2}), we apply the chain rule to the lifts 
$
 \varphi_h(\omega, G_h(x)) = \phi_h(\omega, x) 
$
and $w_h(\omega, G_h(x)) = W_h(\omega, x)$
to deduce
\begin{align*}
 \mathcal{D}^{-l}(\omega, x) \nabla \phi_h(\omega, x) \cdot \nabla W_h(\omega, x) 
 &= \mathcal{D}^{-1}(\omega,x) \nabla G_h^{\top}(x) \nabla \varphi_h(\omega, G_h(x)) \cdot \nabla G_h^{\top}(x) \nabla w_h(\omega, G_h(x))\\
 &= \nabla G_h(x) \mathcal{D}^{-l}(\omega,x) \nabla G_h^{\top}(x) \nabla \varphi_h(\omega, G_h(x)) \cdot \nabla w_h(\omega,G_h(x))\\
 &= \delta_h^{D_0}(x) \mathcal{D}^{-l}(\omega,x) R_h^{D_0}(\omega,x) \nabla \varphi_h(\omega,G_h(x)) \cdot \nabla w_h(\omega, G_h(x)).
\end{align*}
We can therefore express the perturbation error in our approximation of $a(\cdot, \cdot)$ as follows
\begin{align*}
 a( (\varphi_h,& \xi_h),(w_h, \chi_h) ) - a_h( (\phi_h, \zeta_h),(W_h,X_h)) \\
 &= \alpha \int_{\Omega} \int_{B_h^l} \mathcal{D}(\omega) \left( I- R_h^{D_0,l} (\omega) \right) \nabla \varphi_h(\omega) \cdot \nabla w_h(\omega) 
 + \left(1 - \frac{1}{\delta_h^{D_0,l}} \right) \varphi_h(\omega) w_h(\omega) \sqrt{g(\omega)}\\
 &+ \beta \int_{\Omega} \int_{\Gamma_0} \mathcal{D}_{\Gamma_0}(\omega) \left( \mathcal{P}_{\Gamma_0} - R_h^{\Gamma_0,l} \right) \nabla_{\Gamma_0} \xi_h(\omega)
 \cdot \nabla_{\Gamma_0} \chi_h(\omega)
 + \left( 1 - \frac{1}{\delta_h^{\Gamma_0,l}} \right) \xi_h(\omega) \chi_h(\omega) \sqrt{g_{\Gamma_0}(\omega) } \\
 &+ \int_{\Omega} \int_{\Gamma_0} \left( 1 - \frac{1}{\delta_h^{\Gamma_0,l}} \right)  \left( \alpha \varphi_h(\omega) - \beta \xi_h(\omega) \right)
 \left( \alpha w_h(\omega) - \beta \chi(\omega) \right) \sqrt{g_{\Gamma_0}(\omega)}. \\
\end{align*}
Here we have again used the fact that the diffeomorphic mapping $G_h$ is the identity on interior simplices and 
consequently $\delta_h^{D_0} = 1$ and $R_h^{D_0} = I$ on $D_h \setminus B_h.$
We now apply the geometric estimates and bounds on the random coefficients to obtain
\begin{align*}
  |a( (\varphi_h, \xi_h),(w_h, \chi_h) ) - a_h( (\phi_h, \zeta_h),(W_h,X_h))|  
 &\lesssim h \| \varphi_h\|_{L^2(\Omega; H^1(B_h^l))} \|w_h\|_{L^2(\Omega; H^1(B_h^l))}\\
 +& h^2 \| \xi_h\|_{L^2(\Omega; H^1(\Gamma_0))}\| \chi_h\|_{L^2(\Omega; H^1(\Gamma_0))}\\
 +& h^2 \| \alpha \varphi_h - \beta \xi_h\|_{L^2(\Omega; L^2(\Gamma_0))} \| \alpha w_h - \beta \chi_h\|_{L^2(\Omega; L^2(\Gamma_0))}\\
\end{align*}
For the last term, we observe by the boundedness of the trace operator $\|f\|_{L^2(\Gamma_0)} \leq c_T \|f\|_{H^1(D_0)}$ that
\begin{align*}
  \| \alpha \varphi_h &- \beta \xi_h\|_{L^2(\Omega; L^2(\Gamma_0))} \| \alpha w_h - \beta \chi_h\|_{L^2(\Omega; L^2(\Gamma_0))}\\
  &\leq \left( \alpha c_T \| \varphi_h\|_{L^2(\Omega; H^1(D_0))} + \beta \| \xi_h\|_{L^2(\Omega; L^2(\Gamma_0))} \right)
 \left( \alpha c_T \| w_h\|_{L^2(\Omega;H^1(D_0))} + \beta \| \chi_h\|_{L^2(\Omega; L^2(\Gamma_0))} \right)\\
 &\lesssim\| ( \varphi_h, \xi_h) \|_{L^2(\Omega; H^1(D_0)\times L^2(\Gamma_0))}
 \|( w_h, \chi_h) \|_{L^2(\Omega; H^1(D_0)\times L^2(\Gamma_0))}.
\end{align*}
Examining the bulk term,
we see that we are unable to apply the narrow band inequality Lemma \ref{lemma:NarrowBand}, to the derivative of $\varphi_h(\omega)$ and  $w_h(\omega)$ 
since the functions only belong to the space  $V_h \subset H^1(D_0)$, resulting in the bound of order $h$ given in (\ref{eq:ConsistA1}). However, 
considering sufficiently regular functions $\varphi,w \in L^2(\Omega; H^2(D_0))$, we are able to employ Lemma \ref{lemma:NarrowBand} attaining 
the estimate of order $h^2$ given in (\ref{eq:ConsistA2}).
\end{proof}

The regularity assumption (R1) on the associated dual problem follows again from the symmetry of the bilinear for $a(\cdot,\cdot)$ and 
the previously derived regularity result given in Theorem \ref{theorem:RegCoupled}.
Hence all the assumptions of the abstract theory are satisfied and we have the stated convergence rate given in Theorem \ref{theorem:ConvergenceRates}.


\section{Numerical results}
In this section, we numerically verify the stated convergence rates of the two proposed finite element discretisations of 
the reformulated model elliptic problems.
 In both cases, the numerical scheme has been implemented in DUNE \cite{bastian2008generic,dedner2010generic}. 
\subsection{Random Surface}
As a model for the random surface $\Gamma(\omega)$, we consider a graph-like representation over the unit sphere $\Gamma_0 = S^2$ 
\begin{equation}\label{eq:GraphicalReps}
 \Gamma(\omega) = \{ x + h(\omega,x) \nu^{\Gamma_0}(x) \, | \, x \in \Gamma_0 \},
\end{equation}
 where the prescribed height function $h(\omega,\cdot):\Gamma_0 \rightarrow \re$, will take the form of a truncated spherical harmonic expansion 
\begin{equation}
 h(\omega,x) = \epsilon_{tol} \sum\limits_{m<6} \sum\limits_{|l|\leq m} \lambda_{l,m}(\omega) Y_l^m(\theta, \phi) \quad x = ( sin\theta cos\phi, sin\theta sin\phi ),
\end{equation}
with independent, uniformly distributed random coefficients $\lambda_{l,m}\sim U(-1,1)$.
Here $\epsilon_{tol}>0$ is a parameter controlling the maximum deviation of the fluctuating surface which in practice will be set to $\epsilon_{tol}=0.1$ 
and $Y_l^m$ denotes the spherical harmonic function of degree $l$ and order $m$, which correspond to the eigenvalues of the Laplace-Beltrami operator.
For further details on exact form of the spherical harmonics, we refer the reader to \cite{groemer1996geometric,atkinson2012spherical}.
Realisations of the random surface for different samples are given below in Figure \ref{fig:realSurf}.
\begin{figure}[!htbp]
\begin{center}
$
\begin{array}{ccc}
\includegraphics[width=1.5in]{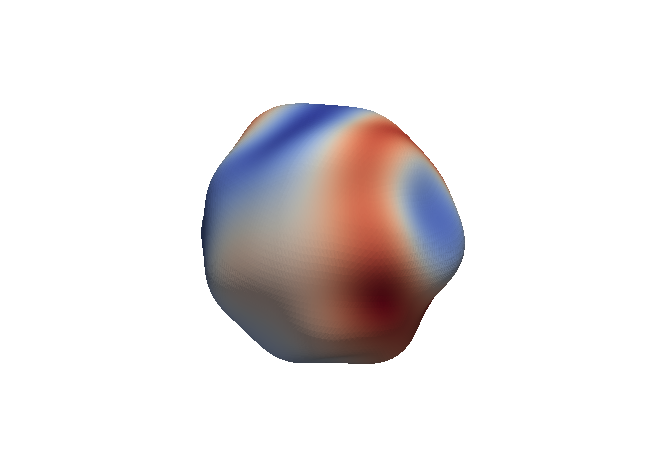} &
\includegraphics[width=1.5in]{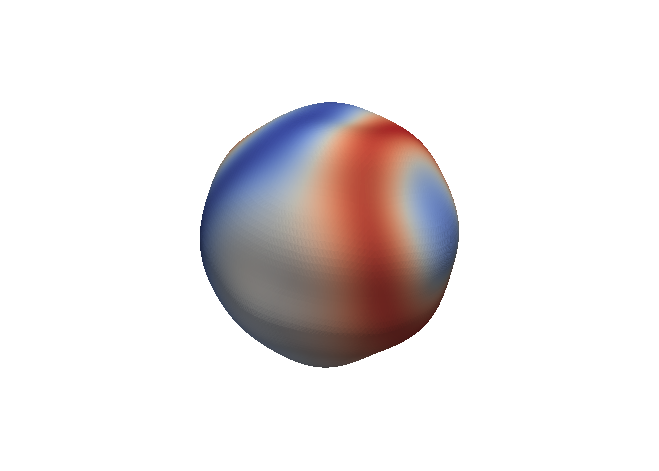} &
\includegraphics[width=1.5in]{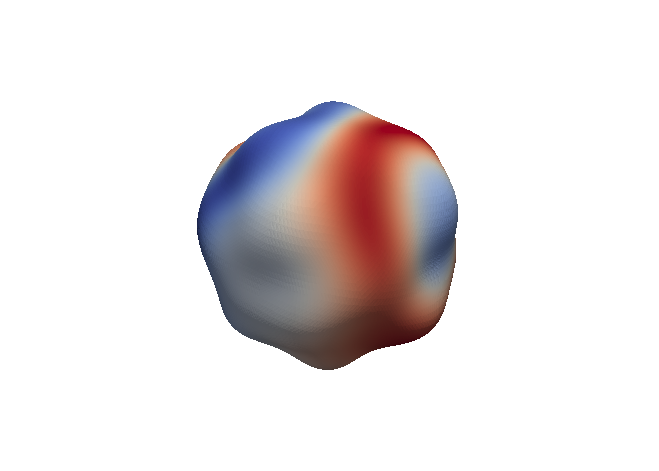} \\
\includegraphics[width=1.5in]{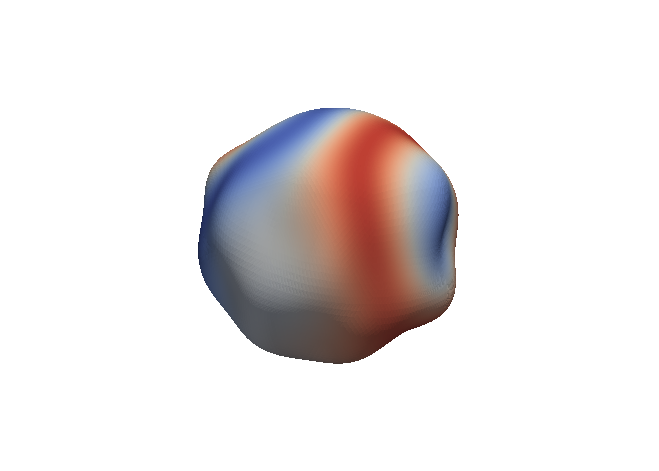} &
\includegraphics[width=1.5in]{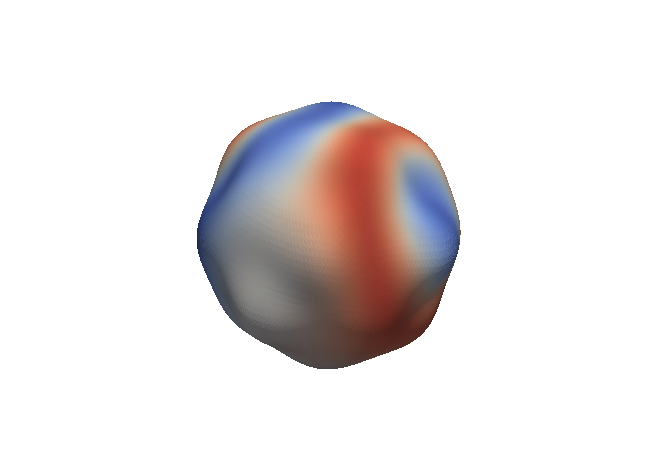} &
\includegraphics[width=1.5in]{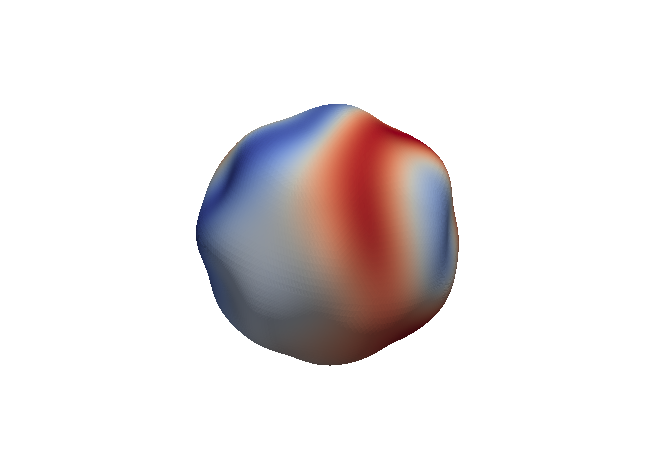} \\
\end{array}$
\end{center}
\caption{Realisations of the path-wise solution on the associated realisation of the random surface.}
\label{fig:realSurf}
\end{figure}

To numerical verify the convergence rate, we set the exact pull-back solution to be given by 
\begin{align*}
\hat{u}(\omega,x) &= sin( \pi (x^2 -1) y (z-1)) + \sigma_{tol} \nu_1(\omega) cos( \pi z (y+1)) +\sigma_{tol} \nu_2(\omega) sin(\pi( x+y) z^2)
\end{align*}
with $\nu_1, \nu_2 \sim U(-1,1)$ and $\sigma_{tol}>0$ a constant controlling the largest deviation of pathwise solution. 
This in turn determines the random data $\hat{f}$ given in the reformulated elliptic equation (\ref{eq:reformSurf}).
We observe the following errors for the approximation $\mathbb{E}[\hat{u}] - E_M[\hat{u}_h]$ in $L^2(\Omega^M; L^2(\Gamma_0))$ and $L^2(\Omega^M; H^1(\Gamma_0))$ 
and thus the stated convergence results.    
\begin{table}[h]
  \begin{minipage}{0.42\textwidth}
            \centering
            \begin{table}[H]
\centering
\renewcommand{\arraystretch}{1.3}
\begin{tabular}{|c c| c|c c|}
\hline
$h$	      	 & $M$	 	&  $E_{L^2(\Gamma_0)}$  &  $eoc(h)$ 	& $eoc(M)$      \\ \hline 
0.171499         &        1     &     0.776832          &       -       &          -    \\ \hline        
0.0877058        &        16    &      0.387486         &   1.03722     &    -0.250864  \\ \hline
0.0441081        &       256    &      0.106022         &   1.88556     &    -0.467444  \\ \hline          
0.0220863        &      4096    &     0.0267303         &   1.99202     &    -0.496955  \\ \hline 
\end{tabular}
\caption{ Error in $L^2(\Omega^M;L^2(\Gamma_0))$. } \label{table:surfaceL2}
\end{table}
        \end{minipage}
        \hspace{33pt}
        \begin{minipage}{0.42\textwidth}
            \centering
  \begin{table}[H]
\centering
\renewcommand{\arraystretch}{1.3}
\begin{tabular}{|c c| c|c c|}
\hline
$h$	      	 & $M$	 	&  $E_{H^1(\Gamma_0)}$  &  $eoc(h)$ 	&    $eoc(M)$      \\ \hline 
0.171499         &       64     &      4.89172          &      -        &          -       \\ \hline 		
0.0877058        &       256    &       3.68809         &   0.421176    &     -0.203734    \\ \hline 
0.0441081        &     1024     &      1.90402          &    0.961875   &      -0.476911   \\ \hline 
0.0220863        &      4096    &      0.961782         &    0.987348   &      -0.492633   \\ \hline  
\end{tabular}
\caption{Error in $L^2(\Omega^M;H^1(\Gamma_0))$.} \label{table:surfaceH1}
\end{table}
        \end{minipage}
    \end{table}
\subsection{Random bulk-surface}
For the coupled-elliptic system on a random bulk-surface, we adopt a similar approach to the random surface numerical example and
prescribe the curved boundary to the random bulk $D(\omega)$ which for simplicity is taken to lie in $\re^2$, as a graph
\begin{equation}
 \Gamma(\omega) = \{ x + h(\omega,x) \nu^{\Gamma_0}(x) \, | \, x \in S^1 \}
\end{equation}
over the unit circle. Here the random height function will given by a truncated Fourier series 
\[
h(\omega,x) = \sum\limits_{n =1}^6 \lambda_n(\omega) cos(n\theta) + \hat{\lambda}_n(\omega) sin(n\theta) \quad x = (cos(\theta),sin(\theta) ) \in S^1,
\]
with independent, uniformly distributed random coefficients $\lambda_n, \hat{\lambda}_n \sim U(-1,1)$.
We extend the given boundary process in the normal direction  into the interior with a sufficiently 
smooth blending function to form the stochastic domain mapping 
\begin{equation}\label{eq:stochasticMap}
\phi(x, \omega) = x + L_{\delta}(|x - a^{\Gamma_0}(x) |) h(a^{\Gamma_0}(x), \omega) \nu^{\Gamma_0}(a^{\Gamma_0}(x)) \quad x \in \overline{B_1(0)}.
\end{equation}
Here the precise form of the chosen blending function $L_{\delta}(\cdot): \re_{\geq 0} \rightarrow \re_{\geq 0}$ is given by
\[
 L_{\delta}(x) = \begin{cases}
                  exp\left( \frac{-x^2}{\delta^2 - x^2}\right) \quad &\text{if }x < \delta,\\
                  0 			           \quad &\text{if } x \geq \delta.		 
                 \end{cases}
\]
Realisations of the image of the reference domain mappped under the random domain mapping (\ref{eq:stochasticMap}) are provided in Figure \ref{fig:realCoupled}.
\begin{figure}[!htbp]
\begin{center}$
\begin{array}{cccc}
\includegraphics[width=1.5in]{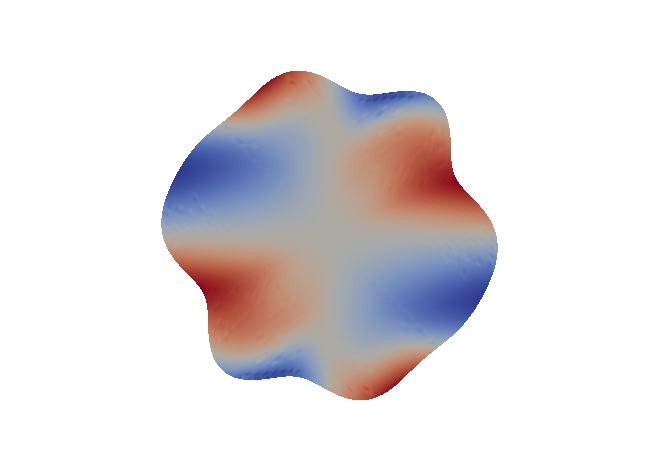} &
\includegraphics[width=1.5in]{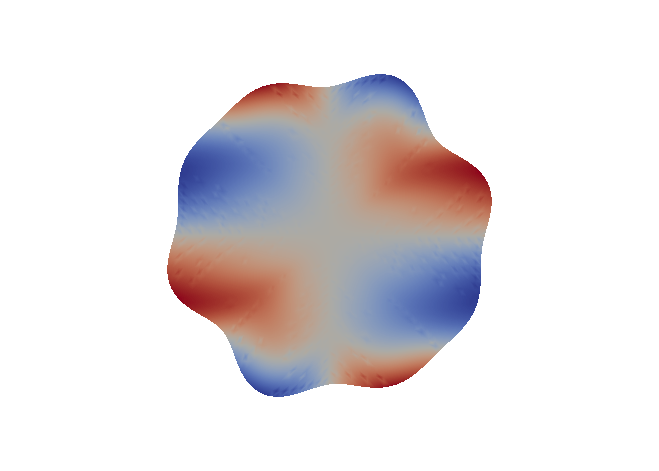} &
\includegraphics[width=1.5in]{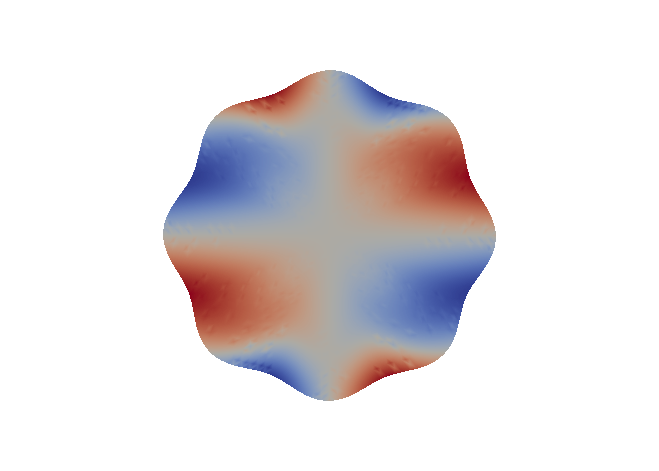} &
\includegraphics[width=1.5in]{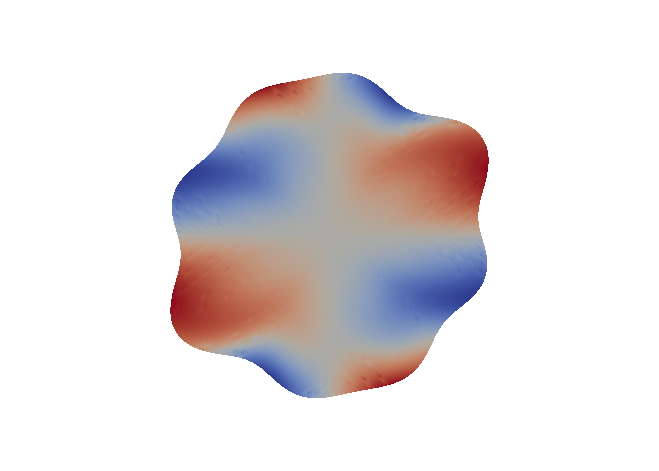} \\
\end{array}$
\end{center}
\caption{Realisations on pathwise solution on the random bulk-surface.}
\label{fig:realCoupled}
\end{figure}

We set the pull-back of the path-wise bulk solution to be given by 
\[
 \hat{u}(\omega,x) = sin(\pi x y) cos(\pi y^2) + \epsilon_{tol} \lambda(\omega) cos(\pi xy)
\]
with uniformly distributed random coefficient $\lambda\sim U(-1,1)$ and $\epsilon_{tol}=0.1$. This determines the pull-back of the path-wise surface solution $\hat{v}$ by the 
reformulated Robin boundary condition
\[
 \alpha \hat{u}(\omega) - \beta \hat{v}(\omega) + \frac{\sqrt{g(\omega)}}{\sqrt{g_{\Gamma_0}(\omega)}} G^{-1}(\omega) \nu^{\Gamma_0} \cdot \nabla \hat{u}(\omega) = 0 \quad \text{on } \Gamma_0,
\]

from which the data $f$ and $\hat{f}_{\Gamma_0}$ can then be computed.
Note that in practice, the expectation $\mathbb{E}[\hat{v}]$ and its surface derivative are approximated with Monte-Carlo sampling to sufficiently high accuracy. 
We observe the following errors and experimental order of convergence for the approximations
of the bulk $\mathbb{E}[\hat{u}] - E_M[\hat{u}_h]$ and the surface $\mathbb{E}[\hat{v}] - E_M[\hat{v}_h]$ mean solutions. 
\begin{table}[H]
\centering
\renewcommand{\arraystretch}{1.1}
\begin{tabular}{|c c| c|c c| c| c c|}
\hline
$h$	      	 & $M$	 	&    Bulk error $ E_{L^2(D_0)}$  &     $eoc(h)$ 	&    $eoc(M)$      &  Surface error $ E_{L^2(\Gamma_0)}$  &  $eoc(h)$     &    $eoc(M)$     \\ \hline 
0.27735          &       1      &       0.619144       &         -      &          -       &         5.0787          &      -        &        -	       \\ \hline	
0.156174         &       16     &       0.198298       &      1.98249   &      -0.410651   &        1.06707          &   2.71654     &      -0.562702  \\ \hline
0.0830455        &       256    &       0.0540441      &      2.05828   &      -0.468866   &        0.28356          &   2.0983      &      -0.477981  \\ \hline
0.0428353        &      4096    &       0.0152612      &      1.91003   &      -0.456067   &      0.0723061          &   2.06414     &      -0.492866  \\ \hline
\end{tabular}
\caption{Errors in $L^2(\Omega^M; L^2(D_0))$ and $L^2(\Omega^M;L^2(\Gamma_0)).$} \label{table:coupledL2}
\end{table}

\begin{table}[H]
\centering
\renewcommand{\arraystretch}{1.1}
\begin{tabular}{|c c| c|c c| c| c c|}
\hline
$h$	      	 &    $M$	&    Bulk error $E_{L^2(D_0)}$  &     $eoc(h)$ 	  &    $eoc(M)$     &  Surface error $E_{L^2(\Gamma_0)}$  &  $eoc(h)$     &    $eoc(M)$     \\ \hline 
0.27735          &     64       &      3.41133         &       -          &      -          &       15.5792           &     -         &       -	        \\ \hline
0.156174         &     256      &      2.17523         &      0.783494    &    -0.324584    &       7.85391           &     1.1926    &     -0.494068   \\ \hline
0.0830455        &    1024      &      1.08874         &      1.09584     &    -0.499252    &       4.20041           &   0.990894    &     -0.451441   \\ \hline
0.0428353        &    4096      &      0.55599         &      1.01511     &    -0.484767    &       2.12783           &    1.02727    &     -0.490574   \\ \hline
\end{tabular}
\caption{Errors in $L^2(\Omega^M; H^1(D_0))$ and $L^2(\Omega^M;H^1(\Gamma_0))$. } \label{table:coupledH1}
\end{table} 
\newpage
\bibliography{Random_Elliptic}{}

\begin{thebibliography}{10}

\bibitem{atkinson2012spherical}
K.~Atkinson and W.~Han.
\newblock {\em Spherical harmonics and approximations on the unit sphere: an
  introduction}, volume 2044.
\newblock Springer Science \& Business Media, 2012.

\bibitem{babuska2004galerkin}
I.~Babuska, R.~Tempone, and G.~E. Zouraris.
\newblock Galerkin finite element approximations of stochastic elliptic partial
  differential equations.
\newblock {\em SIAM Journal on Numerical Analysis}, 42(2):800--825, 2004.

\bibitem{bastian2008generic}
P.~Bastian, M.~Blatt, A.~Dedner, C.~Engwer, R.~Kl{\"o}fkorn, M.~Ohlberger, and
  O.~Sander.
\newblock A generic grid interface for parallel and adaptive scientific
  computing. part i: abstract framework.
\newblock {\em Computing}, 82(2-3):103--119, 2008.

\bibitem{canuto2007fictitious}
C.~Canuto and T.~Kozubek.
\newblock A fictitious domain approach to the numerical solution of pdes in
  stochastic domains.
\newblock {\em Numerische mathematik}, 107(2):257, 2007.

\bibitem{castrillon2016analytic}
J.~Castrillon-Candas, F.Nobile, and R.~Tempone.
\newblock Analytic regularity and collocation approximation for elliptic pdes
  with random domain deformations.
\newblock {\em Computers \& Mathematics with Applications}, 71(6):1173--1197,
  2016.

\bibitem{cliffe2011multilevel}
K.~A. Cliffe, M.~B. Giles, R.~Scheichl, and A.~L. Teckentrup.
\newblock Multilevel monte carlo methods and applications to elliptic pdes with
  random coefficients.
\newblock {\em Computing and Visualization in Science}, 14(1):3, 2011.

\bibitem{dambrine2017bernoulli}
M.~Dambrine, H.~Harbrecht, M.~Peters, and B.~Puig.
\newblock On bernoulli's free boundary problem with a random boundary.
\newblock {\em International Journal for Uncertainty Quantification}, 7(4),
  2017.

\bibitem{deckelnick2005computation}
K.~Deckelnick, G.~Dziuk, and C.~M. Elliott.
\newblock Computation of geometric partial differential equations and mean
  curvature flow.
\newblock {\em Acta numerica}, 14:139--232, 2005.

\bibitem{dedner2010generic}
A.~Dedner, R.~Kl{\"o}fkorn, M.~Nolte, and M.~Ohlberger.
\newblock A generic interface for parallel and adaptive discretization schemes:
  abstraction principles and the dune-fem module.
\newblock {\em Computing}, 90(3-4):165--196, 2010.

\bibitem{djurdjevac2018RD}
A.~Djurdjevac.
\newblock Random moving domain.
\newblock {\em arXiv preprint arXiv:1808.06970}, 2018.

\bibitem{djurdjevac2017evolving}
A.~Djurdjevac, C.~Elliott, R.~Kornhuber, and T.~Ranner.
\newblock Evolving surface finite element methods for random
  advection-diffusion equations.
\newblock {\em arXiv preprint arXiv:1702.07290}, 2017.

\bibitem{elliott2012finite}
C.~Elliott and T.~Ranner.
\newblock Finite element analysis for a coupled bulk--surface partial
  differential equation.
\newblock {\em IMA Journal of Numerical Analysis}, 33(2):377--402, 2012.

\bibitem{elliott2017unified}
C.~Elliott and T.~Ranner.
\newblock A unified theory for continuous in time evolving finite element space
  approximations to partial differential equations in evolving domains.
\newblock {\em arXiv preprint arXiv:1703.04679}, 2017.

\bibitem{dziuk2013finite}
G.Dziuk and C.~Elliott.
\newblock Finite element methods for surface pdes.
\newblock {\em Acta Numerica}, 22:289--396, 2013.

\bibitem{groemer1996geometric}
H.~Groemer.
\newblock {\em Geometric applications of Fourier series and spherical
  harmonics}, volume~61.
\newblock Cambridge University Press, 1996.

\bibitem{gunzburger2014stochastic}
M.~D. Gunzburger, C.~G. Webster, and G.~Zhang.
\newblock Stochastic finite element methods for partial differential equations
  with random input data.
\newblock {\em Acta Numerica}, 23:521--650, 2014.

\bibitem{harbrecht2013first}
H.~Harbrecht and J.~Li.
\newblock First order second moment analysis for stochastic interface problems
  based on low-rank approximation∗.
\newblock {\em ESAIM: Mathematical Modelling and Numerical Analysis},
  47(5):1533--1552, 2013.

\bibitem{harbrecht2016analysis}
H.~Harbrecht, M.~Peters, and M.Siebenmorgen.
\newblock Analysis of the domain mapping method for elliptic diffusion problems
  on random domains.
\newblock {\em Numerische Mathematik}, 134(4):823--856, 2016.

\bibitem{harbrecht2014numerical}
H.~Harbrecht, M.~Peters, and M.~Siebenmorgen.
\newblock Numerical solution of elliptic diffusion problems on random domains.
\newblock {\em Preprint}, 8, 2014.

\bibitem{harbrecht2008sparse}
H.~Harbrecht, R.~Schneider, and C.~Schwab.
\newblock Sparse second moment analysis for elliptic problems in stochastic
  domains.
\newblock {\em Numerische Mathematik}, 109(3):385--414, 2008.

\bibitem{kuo2012quasi}
F.~Y. Kuo, C.~Schwab, and I.~H. Sloan.
\newblock Quasi-monte carlo finite element methods for a class of elliptic
  partial differential equations with random coefficients.
\newblock {\em SIAM Journal on Numerical Analysis}, 50(6):3351--3374, 2012.

\bibitem{ladyzhenskaya1968nina}
O.~A. Ladyzhenskaya.
\newblock Nina n. ural tseva.
\newblock {\em Linear and quasilinear elliptic equations}, 1968.

\bibitem{le2010spectral}
O.~Le~Ma{\^\i}tre and O.~M. Knio.
\newblock {\em Spectral methods for uncertainty quantification: with
  applications to computational fluid dynamics}.
\newblock Springer Science \& Business Media, 2010.

\bibitem{lord2014introduction}
G.~J. Lord, C.~E. Powell, and T.~Shardlow.
\newblock {\em An introduction to computational stochastic PDEs}.
\newblock Number~50. Cambridge University Press, 2014.

\bibitem{matthies2005galerkin}
H.~G. Matthies and A.~Keese.
\newblock Galerkin methods for linear and nonlinear elliptic stochastic partial
  differential equations.
\newblock {\em Computer methods in applied mechanics and engineering},
  194(12-16):1295--1331, 2005.

\bibitem{dambrine2016numerical}
M.Dambrine, I.~Greff, H.~Harbrecht, and B.~Puig.
\newblock Numerical solution of the poisson equation on domains with a thin
  layer of random thickness.
\newblock {\em SIAM Journal on Numerical Analysis}, 54(2):921--941, 2016.

\bibitem{nouy2011fictitious}
A.~Nouy, M.~Chevreuil, and E.~Safatly.
\newblock Fictitious domain method and separated representations for the
  solution of boundary value problems on uncertain parameterized domains.
\newblock {\em Computer Methods in Applied Mechanics and Engineering},
  200(45-46):3066--3082, 2011.

\bibitem{nouy2008extended}
A.~Nouy, A.~Clement, F.~Schoefs, and N.~Mo{\"e}s.
\newblock An extended stochastic finite element method for solving stochastic
  partial differential equations on random domains.
\newblock {\em Computer Methods in Applied Mechanics and Engineering},
  197(51-52):4663--4682, 2008.

\bibitem{reedmethods}
M.~Reed and B.~Simon.
\newblock Methods of modern mathematical physics i: Functional analysis
  (academic, new york, 1972).
\newblock {\em Google Scholar}, page 151.

\bibitem{strang1973analysis}
G.~Strang and G.~J. Fix.
\newblock {\em An analysis of the finite element method}, volume 212.
\newblock Prentice-hall Englewood Cliffs, NJ, 1973.

\bibitem{whitney1934analytic}
H.~Whitney.
\newblock Analytic extensions of differentiable functions defined in closed
  sets.
\newblock {\em Transactions of the American Mathematical Society},
  36(1):63--89, 1934.

\bibitem{xiu2006numerical}
D.~Xiu and D.~M. Tartakovsky.
\newblock Numerical methods for differential equations in random domains.
\newblock {\em SIAM Journal on Scientific Computing}, 28(3):1167--1185, 2006.

\end{thebibliography}
\bibliographystyle{abbrv}

\end{document}